\title{On the average condition number of tensor rank decompositions}
\author{Paul Breiding}
\thanks{PB: Max Planck Institute for Mathematics in the Sciences,
Inselstrasse 22--26, 04103 Leipzig, Germany. Email: breiding@mis.mpg.de. Partially supported by DFG research grant BU 1371/2-2.}
\author{Nick Vannieuwenhoven}
\thanks{NV: KU Leuven, Department of Computer Science,
Celestijnenlaan 200A, 3001 Heverlee, Belgium. Email: nick.vannieuwenhoven@kuleuven.be. Supported by a Postdoctoral Fellowship of the Research Foundation--Flanders.}
\newcommand{\thelanguage}{english}
\newcommand{\set}[1]{\left\{#1\right\}}
\newcommand{\cset}[2]{\left\{#1\mid #2\right\}}
\newcommand{\Norm}[1]{\lVert #1 \rVert}
\renewcommand{\d}{\mathrm{d}}
\DeclareMathOperator*{\Prob}{\mathrm{Prob}}
\DeclareMathOperator*{\mean}{\mathbb{E}}
\newcommand{\tuple}[1]{\mathfrak{#1}}
\newcommand{\Var}[1]{\mathcal{#1}}
\newcommand{\tensor}[1]{\mathfrak{#1}}
\newcommand{\vect}[1]{\mathbf{#1}}
\newcommand{\sten}[3]{\vect{#1}_{#2}^{#3}}
\newcommand{\Tang}[2]{\mathrm{T}_{#1} {#2}}
\newcommand{\R}{\mathbb{R}}
\newcommand{\deriv}[2]{\mathrm{d}_{#2} #1}
\newcommand{\Gr}{\mathrm{Gr}}
\newcommand{\GrSigma}{\Sigma_{\Gr}}
\newcommand{\dist}{\mathrm{dist}}
\numberwithin{equation}{section}
\numberwithin{figure}{section}
\numberwithin{table}{section}
\theoremstyle{plain}
\newcounter{numbering} \numberwithin{numbering}{section}
\newtheorem{thm}[numbering]{Theorem}
\newtheorem{lemma}[numbering]{Lemma}
\newtheorem{prop}[numbering]{Proposition}
\newtheorem{cor}[numbering]{Corollary}
\theoremstyle{definition}
\newtheorem{conjecture}[numbering]{Conjecture}
\newtheorem{dfn}[numbering]{Definition}
\theoremstyle{remark}
\newtheorem{rem}{Remark}
\crefname{equation}{}{}
\crefname{equation}{}{}
\crefname{figure}{Figure}{Figures}
\crefname{section}{Section}{Sections}
\crefname{table}{Table}{Tables}
\crefname{lemma}{Lemma}{Lemmata}
\crefname{prop}{Proposition}{Propositions}
\crefname{thm}{Theorem}{Theorems}
\crefname{cor}{Corollary}{Corollaries}
\crefname{dfn}{Definition}{Definitions}
\crefname{notation}{Notations}{Notations}
\crefname{rem}{Remark}{Remarks}
\crefname{claim}{Claim}{claims}
\newcommand{\Pj}{\mathbb{P}}
\newcommand{\refthm}[1]{{\cref{#1}}}
\newcommand{\reflem}[1]{{\cref{#1}}}
\newcommand{\refeqn}[1]{{\cref{#1}}}
\begin{document}
\begin{abstract}
  We compute the expected value of powers of the geometric condition number of random tensor rank decompositions. It is shown in particular that the expected value of the condition number of $n_1\times n_2 \times 2$ tensors with a random rank-$r$ decomposition, given by factor matrices with independent and identically distributed standard normal entries, is infinite. This entails that it is expected and probable that such a rank-$r$ decomposition is sensitive to perturbations of the tensor. Moreover, it provides concrete further evidence that tensor decomposition can be a challenging problem, also from the numerical point of view. On the other hand, we provide strong theoretical and empirical evidence that tensors of size $n_1~\times~n_2~\times~n_3$ with all $n_1,n_2,n_3 \ge 3$ have a finite average condition number. This suggests there exists a gap in the expected sensitivity of tensors between those of format $n_1\times n_2 \times 2$ and other order-3 tensors. For establishing these results, we show that a natural weighted distance from a tensor rank decomposition to the locus of ill-posed decompositions with an infinite geometric condition number is bounded from below by the inverse of this condition number. That is, we prove one inequality towards a so-called condition number theorem for the tensor rank decomposition.
\end{abstract}
\maketitle

{\small {\bf Keywords}
tensor rank decomposition; CPD; condition number; ill-posed problems; inverse distance to ill-posedness; average complexity;
}

{\small {\bf Subject class} Primary 49Q12, 53B20, 15A69; Secondary 14P10, 65F35, 14Q20
49Q12  	Sensitivity analysis
53B20  	Local Riemannian geometry
15A69  	Multilinear algebra, tensor products
14P10  	Semialgebraic sets and related spaces
65F35  	Matrix norms, conditioning, scaling
14Q20  	Effectivity, complexity  (computation in AG)
}

\section{Introduction}
Whenever data depends on several variables, it may be stored as a $d$-array
$$
 \tensor{A} = \begin{bmatrix} a_{i_1,i_2,\ldots,i_d} \end{bmatrix}_{i_1,i_2,\ldots,i_d=1}^{n_1,n_2,\ldots,n_d} \in \R^{n_1 \times n_2 \times \cdots \times n_d}.
$$
For the purpose of our exposition, this $d$-array is informally called a \emph{tensor}.
Due to the curse of dimensionality, plainly storing this data in a tensor is neither feasible nor insightful. Fortunately, the data of interest often admit additional structure that can be exploited. One particular tensor decomposition is the \textit{tensor rank decomposition}, or \textit{canonical polyadic decomposition} (CPD). It was proposed by \cite{hitchcock} and expresses a tensor $\tensor{A} \in \R^{n_1 \times n_2 \times \cdots \times n_d}$ as a minimum-length linear combination of rank-$1$ tensors:
\begin{equation}\tag{CPD}\label{CPD}
 \tensor{A} =  \tensor{A}_1+\tensor{A}_2 + \cdots+\tensor{A}_r,\quad\text{where}\quad \tensor{A}_i = \sten{a}{i}{1} \otimes \sten{a}{i}{2} \otimes \cdots \otimes \sten{a}{i}{d},
\end{equation}
and where $\otimes$ is the \textit{tensor product}:
\begin{equation}\label{tensor_in_coordinates}
 \sten{a}{}{1} \otimes \sten{a}{}{2} \otimes \cdots \otimes \sten{a}{}{d} =
 \begin{bmatrix}
 a_{i_1}^{(1)} a_{i_2}^{(2)} \cdots a_{i_d}^{(d)}
\end{bmatrix}_{i_1,i_2,\ldots,i_d=1}^{n_1,n_2,\ldots,n_d} \in \R^{n_1 \times n_2 \times \cdots \times n_d}, \quad \text{where $\sten{a}{}{k} = [a_{i}^{(k)}]_{i=1}^{n_k}$.}
\end{equation}
The smallest $r$ for which the expression \refeqn{CPD} is possible is called the \emph{rank} of $\tensor{A}$. In several applications, the CPD of a tensor reveals domain-specific information that is of interest, such as in psychometrics \cite{Kroonenberg2008}, chemical sciences \cite{SBG2004}, theoretical computer science \cite{BCS1997}, signal processing \cite{Comon1994,CJ2010,Review2016}, statistics \cite{AMR2009,M1987} and machine learning \cite{AGHKT2014}. In most of these applications, the data that the tensor represents is corrupted by measurement errors, which will cause the CPD computed from the measured data to differ from the CPD of the true, uncorrupted data.

For measuring the sensitivity of a computational problem to perturbations in the data, a standard technique in numerical analysis is investigating the \emph{condition number} \cite{condition,higham}. Earlier theoretical work by the authors introduced two related condition numbers for the computational problem of computing a CPD from a given tensor; see \cite{V2017,BV2017}. Let us recall the definition of the geometric condition number of the tensor rank decomposition of \cite{BV2017}. The set of rank-1 tensors $\Var{S}\subset \R^{n_1\times \cdots \times n_d}$ is a smooth manifold, called \emph{Segre manifold}. The set of tensors of rank at most~$r$ is given as the image of the addition map
$\Phi:\Var{S}^{\times r}\to \R^{n_1\times \cdots \times n_d}, (\tensor A_1,\ldots, \tensor A_r)\to \tensor A_1 + \cdots + \tensor A_r$. The condition number of $\tensor{A}$ is defined locally\footnote{Consult \cite[Section 1]{BV2017} for an explanation why a local definition is required.} at the decomposition $(\tensor A_1,\ldots, \tensor A_r)$ as
  $$
  \kappa(\tensor{A},(\tensor A_1,\ldots, \tensor A_r)) := \lim\limits_{\epsilon \to 0} \,\sup\limits_{\tensor{B} \text{ has rank } r, \atop \Vert \tensor{A} - \tensor{B}\Vert < \epsilon} \,\frac{\Vert\Phi^{-1}(\tensor{A}) - \Phi^{-1}(\tensor{B})\Vert}{\Vert \tensor{A} - \tensor{B}\Vert},
  $$
where $\Phi^{-1}$ is the local inverse of $\Phi$ with $\Phi^{-1}(\tensor{A}) ={(\tensor A_1,\ldots, \tensor A_r)}$. If such a local inverse does not exist, we define $\kappa(\tensor{A},(\tensor A_1,\ldots, \tensor A_r)):=+\infty$. The norms are the Euclidean norms induced by the ambient spaces of the domain and image of $\Phi$. As $\tensor{A}$ depends uniquely on $(\tensor A_1,\ldots, \tensor A_r)$ we write~$\kappa(\tensor A_1,\ldots, \tensor A_r)$ for the condition number.

The topic of this paper is the first inquiry into a probabilistic analysis of the condition number of the CPD; see, e.g., \cite{condition,Cucker16}. In particular, we focus on the average analysis and compute the expected value of powers of the condition number for \emph{random rank-$1$ tuples} $(\lambda_1 \tensor{A}_1, \ldots, \lambda_r \tensor{A}_r)$ of length $r$, where the $\lambda_i \in \R\setminus\{0\}$ are arbitrary and $\tensor{A}_i := \sten{a}{i}{1}\otimes \cdots \otimes \sten{a}{i}{d}$ in which the~$\sten{a}{i}{j}~\in~\R^{n_j}$ have independently and identically distributed (i.i.d.) standard normal entries.
This distribution is very relevant for scientific research, as samples from it are often employed to test the effectiveness of algorithms for computing CPDs. In \cite[Proposition 7.1]{BV2017} we have shown that the condition number is invariant under scaling of the rank-one tensors $\tensor{A}_i$. For this reason, we assume, without loss of generality, that $\lambda_1 = \cdots = \lambda_r = 1$ in the remainder of this paper. One of the main results we will prove is the following statement.

\begin{cor} \label{cor_expected_value}
 Let $(\tensor{A}_1, \ldots, \tensor{A}_r) \in \Var{S}^{\times r}$ be a random rank-$1$ tuple in $\R^{n_1 \times n_2 \times n_3}$, where $n_1 \ge n_2 \ge n_3 \ge 2$ and $r \ge 2$. Then, we have
$\mathbb{E}\, \bigl[ \kappa(\tensor{A}_1,\ldots,\tensor{A}_r)^{c} \bigr] = \infty$, for all $c \ge n_3-1$.
\end{cor}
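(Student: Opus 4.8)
The plan is to bound $\kappa$ from below by an explicit geometric quantity whose distribution is easy to analyse, and then to integrate. The starting point is the identity (established in \cite{BV2017}) that the limit defining $\kappa(\tensor A_1,\ldots,\tensor A_r)$ equals $\sigma_{\min}(\d\Phi)^{-1}$, where $\d\Phi=\d_{(\tensor A_1,\ldots,\tensor A_r)}\Phi$ is the summation map $\bigoplus_{i=1}^{r}\Tang{\tensor A_i}{\Var S}\to\R^{n_1n_2n_3}$, $(\vect x_1,\ldots,\vect x_r)\mapsto\vect x_1+\cdots+\vect x_r$, and $\kappa=+\infty$ exactly when this map fails to be injective (so the convention $1/0=\infty$ applies). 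Consequently, for \emph{any} unit vector $\vect x$ in the domain one has $\kappa\ge\Norm{\d\Phi(\vect x)}^{-1}$, and it suffices to supply a single well-chosen test vector. The guiding idea is that, probabilistically, the dominant source of ill-conditioning of a random rank-one tuple is near-collinearity in the \emph{smallest} mode: this is the lowest-codimensional (codimension $n_3-1$) piece of the ill-posed locus that two of the $r$ terms can reach while using a single free direction in each of the other two modes.

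Carrying this out, I would set $\vect u:=\sten{a}{1}{1}\otimes\sten{a}{2}{2}\otimes\sten{a}{1}{3}$ and $\vect w:=\sten{a}{1}{1}\otimes\sten{a}{2}{2}\otimes\sten{a}{2}{3}$. Since $\vect u$ is obtained from $\tensor A_1$ by replacing only its second factor, $\vect u\in\Tang{\tensor A_1}{\Var S}$; since $\vect w$ is obtained from $\tensor A_2$ by replacing only its first factor, $\vect w\in\Tang{\tensor A_2}{\Var S}$; both are nonzero almost surely. Because $\vect u$ and $\vect w$ agree in their first two tensor factors and differ only in the third, $\langle\vect u/\Norm{\vect u},\vect w/\Norm{\vect w}\rangle=\cos\phi$ with $\phi:=\angle(\sten{a}{1}{3},\sten{a}{2}{3})$. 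Feeding the vector $\vect x=\bigl(\tfrac1{\sqrt2}\vect u/\Norm{\vect u},\,-\tfrac1{\sqrt2}\vect w/\Norm{\vect w},\,0,\ldots,0\bigr)$, which is a unit vector for the product norm on the domain, into $\d\Phi$ gives $\Norm{\d\Phi(\vect x)}^2=\tfrac12\Norm{\vect u/\Norm{\vect u}-\vect w/\Norm{\vect w}}^2=1-\cos\phi$, hence
\[
  \kappa(\tensor A_1,\ldots,\tensor A_r)\ \ge\ (1-\cos\phi)^{-1/2}\qquad\text{almost surely.}
\]

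It then remains to verify that $\mathbb{E}\bigl[(1-\cos\phi)^{-c/2}\bigr]=\infty$ for every $c\ge n_3-1$. Since $\sten{a}{1}{3}$ and $\sten{a}{2}{3}$ are independent standard Gaussian vectors in $\R^{n_3}$ with $n_3\ge2$, the angle $\phi$ has density proportional to $\sin^{n_3-2}\phi$ on $[0,\pi]$, hence bounded below by a positive multiple of $\phi^{\,n_3-2}$ on $[0,1]$. Combining this with $1-\cos\phi\le\tfrac12\phi^2$ bounds the expectation below by a positive multiple of $\int_0^{1}\phi^{-c}\phi^{\,n_3-2}\,\d\phi=\int_0^{1}\phi^{\,n_3-2-c}\,\d\phi$, and this diverges precisely when $n_3-2-c\le-1$, i.e.\ when $c\ge n_3-1$. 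Together with the previous display this yields $\mathbb{E}\bigl[\kappa(\tensor A_1,\ldots,\tensor A_r)^{c}\bigr]=\infty$ for all $c\ge n_3-1$.

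The only genuinely delicate step is the first one: recognizing which stratum of the ill-posed locus dominates in probability and producing the corresponding pair of tangent vectors $\vect u$, $\vect w$; everything afterwards is a routine singular-value estimate and a one-dimensional integral. I note that this argument proves the corollary without invoking the condition number theorem discussed in the abstract; that sharper tool is instead what one needs to pin down the exact value of $\mathbb{E}[\kappa^c]$, and in particular to establish finiteness of the average when $c<n_3-1$ (notably for tensors with $n_1,n_2,n_3\ge3$), which is outside the scope of the present statement.
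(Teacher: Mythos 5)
Your proof is correct, and it takes a genuinely different route from the paper. The paper obtains \cref{cor_expected_value} by combining \cref{thm_expected_value} with \cref{lem_codim_one}: it exhibits inside $\Sigma_\Pj$ a codimension-$(n_3-1)$ subvariety of tuples in which two rank-one terms share the third-mode factor, and then turns proximity to that locus into a divergent integral via the condition number theorem \cref{cnt}, which in turn requires the weighted metric, the isometric immersion \cref{isometry_thn}, the Jacobian computation for the Segre map (\cref{sigma_det}), the inverse-distance integral \cref{prop:inversedistance}, and the bound $\kappa\ge 1$ to pass from the power $c=n_3-1$ to all larger exponents. You bypass all of this machinery with a pointwise almost-sure lower bound: using $\kappa=\sigma_{\min}(\d\Phi)^{-1}$ (with $1/0=\infty$), which is exactly \cite[Theorem 1.1]{BV2017} as already invoked in \cref{lemma_bound}, you evaluate $\d\Phi$ at the explicit unit test vector built from $\vect u=\sten{a}{1}{1}\otimes\sten{a}{2}{2}\otimes\sten{a}{1}{3}\in\Tang{\tensor{A}_1}{\Var{S}}$ and $\vect w=\sten{a}{1}{1}\otimes\sten{a}{2}{2}\otimes\sten{a}{2}{3}\in\Tang{\tensor{A}_2}{\Var{S}}$ to get $\kappa\ge(1-\cos\phi)^{-1/2}$, where $\phi$ is the angle between the third-mode factors, and then the classical density proportional to $\sin^{n_3-2}\phi$ of the angle between independent Gaussians reduces everything to the divergence of $\int_0^1\phi^{\,n_3-2-c}\,\d\phi$ for $c\ge n_3-1$; all steps check out, including membership of $\vect u,\vect w$ in the respective tangent spaces and the norm computations. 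Your degenerate configuration is precisely the stratum of \cref{lem_codim_one} (two terms with proportional third factors), so the geometric insight is the same; what differs is how divergence of the expectation is extracted. The trade-off: your argument is elementary, self-contained, quantitative (an explicit a.s.\ lower bound on $\kappa$), and handles all real $c\ge n_3-1$ at once without the $\kappa\ge1$ device, but it proves only the corollary, whereas the paper's heavier route yields the general \cref{thm_expected_value} (any format, any codimension-$c$ piece of $\Sigma_\Pj$) and the condition number theorem \cref{cnt}, which carry independent interest and drive the rest of the paper. One small correction to your closing remark: \cref{cnt} is itself only an inequality in the same direction as your bound, so it too cannot establish finiteness of $\mathbb{E}[\kappa^c]$ for $c<n_3-1$; that would require a reverse (local) bound, as the paper points out in its conclusions.
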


In particular, the corollary implies that the expected value of the condition number---without a power---of random rank-$1$ tuples in $\R^{n_1 \times n_2 \times 2}$ is $\infty$. This result provides further concrete evidence that the problem of computing a CPD can have a high condition number with a nonnegligible probability. See, for example, the curve $n=2$ in \cref{figure_powerdistribution} which shows the complementary cumulative distribution function of the condition number of random rank-$1$ tuples of length $7$ in $\R^{7 \times 7 \times 2}$. It shows that there is a $10\%$ chance the condition number is greater than $10^4$, and a $1\%$ chance that it is greater than $4 \cdot 10^5$. In many applications where the CPD is employed, the measurement errors are not sufficiently small to compensate such high condition numbers.

\Cref{cor_expected_value} is a contribution to a body of research illustrating that computing CPDs can be a very challenging problem. The result of \cite{Hastad1990} is often cited in this regard. H\r{a}stad reduces 3SAT to computing the rank of a tensor, which shows that the latter problem is NP-complete in the Turing machine computational model. However, this does not entail that computing a \emph{typical} CPD is a difficult problem. Another oft-cited result by \cite{dSL2008} relates to the difficulty of approximating CPDs; they proved that the problem of computing the best rank-$2$ approximation is ill-posed on an open set in $\R^{n_1 \times n_2 \times n_3}$. Further evidence originates from the sensitivity to perturbations of the CPD: \cite{V2017} illustrated numerically that the norm-balanced condition number can blow up near the ill-posed locus of \cite{dSL2008}; subsequently \cite{BV2017} proved that the geometric condition number will diverge to infinity when approaching the ill-posed locus. Recall from \cite[Theorem 1]{BV2018} that the condition number appears in estimates of the rate of convergence and radii of attraction of Riemannian Gauss--Newton methods for computing a best rank-$r$ approximation of a tensor, such as the ones in \cite{BV2018,BV2018b}. \Cref{cor_expected_value} thus not only shows that computing CPDs is a difficult problem, but also reinforces the result about the high computational complexity of computing low-rank approximations. Nevertheless, the present article is the first to study average complexity.

There are two new key insights that this paper offers. The first is decidedly negative: the average condition number of random rank-$1$ tuples of length $r$ in $\R^{n_1 \times n_2 \times 2}$ is infinite, implying that it is \textit{probable} to sample a CPD with a high condition number; see \cref{sec_numex_distribution}. However, the second one is considerably more positive: our inability to reduce the value of $c$ in \cref{cor_expected_value} to $c=1$, or even any value less than $n_3-1$, in our analysis, should, in combination with the empirical evidence in \cref{sec_numex_distribution} and the impossibility result in \cref{prop_impossible}, be taken as clear evidence for the following conjecture.

\begin{conjecture}
There exists an integer $2 \le r^\star \le \frac{n_1 n_2 n_3}{n_1+n+2+n_3-2}$ such that for all $1\leq r\leq r^\star$ and  $n_1 \ge n_2 \ge n_3 \ge 3$ the expected condition number of random rank-$1$ tuples of length $r$ in~$\R^{n_1 \times n_2 \times n_3}$ is finite.
\end{conjecture}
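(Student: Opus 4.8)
The plan is to reduce the conjecture to two statements: an analytic \emph{reverse condition number estimate} and a \emph{codimension bound} for the ill-posed locus. Write $\mathfrak{x} = (\tensor{A}_1,\ldots,\tensor{A}_r) \in \Var{S}^{\times r}$ and
\[
 \Sigma := \cset{\mathfrak{x} \in \Var{S}^{\times r}}{\kappa(\mathfrak{x}) = +\infty},
\]
which by Terracini's lemma is the locus where $\Phi$ fails to be an immersion, i.e.\ where $\Tang{\tensor{A}_1}{\Var{S}},\ldots,\Tang{\tensor{A}_r}{\Var{S}}$ are not in direct sum. For $r = 1$ the statement is trivial, since $\Phi|_{\Var{S}}$ is an isometry onto its image and $\kappa \equiv 1$. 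For $r \ge 2$, recall from \cite{BV2017} that $\kappa(\mathfrak{x})^{-1}$ is the smallest singular value of the Terracini matrix $\mathbf{U}(\mathfrak{x})$ whose column blocks span $\Tang{\tensor{A}_1}{\Var{S}},\ldots,\Tang{\tensor{A}_r}{\Var{S}}$. The first ingredient I would establish is the inequality complementary to the one proved in this paper: $\kappa(\mathfrak{x}) \le C(\mathfrak{x})\,\dist(\mathfrak{x},\Sigma)^{-1}$, where $C(\mathfrak{x})$ is a weight built from the norms $\Norm{\sten{a}{i}{j}}$ of the Gaussian factors whose negative moments of every order $< n_3$ are finite. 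Granting this and the codimension bound $\operatorname{codim}_{\Var{S}^{\times r}}\Sigma \ge 2$, the conjecture follows: by H\"older's inequality $\mean[\kappa(\mathfrak{x})] \le \mean[C(\mathfrak{x})^{p}]^{1/p}\,\mean[\dist(\mathfrak{x},\Sigma)^{-q}]^{1/q}$ with $\tfrac1p+\tfrac1q=1$, and since $n_3 \ge 3$ one may choose $q \in (n_3/(n_3-1),\,2)$ and $p = q/(q-1) \in (2,\,n_3)$, for which the first factor is finite by the moment bound on $C$ and the second is finite because $q < 2 \le \operatorname{codim}_{\Var{S}^{\times r}}\Sigma$ and the probability that a random $\mathfrak{x}$ lies within distance $\epsilon$ of a codimension-$k$ subvariety is $O(\epsilon^{k})$ up to logarithmic factors.

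The core algebro-geometric claim is that
\[
 \operatorname{codim}_{\Var{S}^{\times r}}\Sigma \;\ge\; 2 \qquad\text{for all } n_1 \ge n_2 \ge n_3 \ge 3 \text{ and } 2 \le r \le r^\star,
\]
and conjecturally that this codimension equals $n_3 - 1$, which would match the exponent in \cref{cor_expected_value} exactly. Here I would stratify $\Sigma$ into semialgebraic pieces indexed by the combinatorial type of the linear dependence among the $r$ tangent spaces, and bound the codimension of each. The dominant stratum is the one on which exactly one pair of terms, say $\tensor{A}_1, \tensor{A}_2$, shares a factor direction in the shortest mode, $\sten{a}{1}{3}\parallel\sten{a}{2}{3}$, the remaining terms and factors staying generic: then $\tensor{A}_1 + \tensor{A}_2$ collapses onto a single $n_1\times n_2$ matrix slice, which admits a continuum of rank-$2$ decompositions, so $\Phi$ is not locally injective there, and this stratum has codimension exactly $n_3 - 1$ in $\Var{S}^{\times r}$. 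The hypothesis $n_3 \ge 3$ is precisely what forces this stratum---together with its analogues in the two longer modes, of codimensions $n_1-1$ and $n_2-1$---to have codimension at least $2$; for $n_3 = 2$ it is a genuine hypersurface, the locus where the pencil of matrix slices of $\tensor{A}_1+\tensor{A}_2$ has a repeated generalized eigenvalue, and that codimension-one stratum is the geometric reason for the infinite expectation of \cref{cor_expected_value}. Completing the lower bound requires showing that no other stratum drops the codimension below $n_3 - 1$; the restriction $r \le r^\star$ enters here because above the perfect rank $\tfrac{n_1 n_2 n_3}{n_1+n_2+n_3-2}$ the map $\Phi$ is nowhere an immersion, so $\Sigma = \Var{S}^{\times r}$ and no power of $\kappa$ has finite expectation.

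For the analytic side, the subtlety in the reverse estimate is that the Gram matrix $\mathbf{U}(\mathfrak{x})^{\top}\mathbf{U}(\mathfrak{x})$ is positive semidefinite, so on $\Sigma$ its smallest eigenvalue attains its minimum value $0$; at a generic point of $\Sigma$ it therefore vanishes to \emph{second} order in $\dist(\mathfrak{x},\Sigma)$, whence $\sigma_{\min}(\mathbf{U}(\mathfrak{x}))$ vanishes to first order and $\kappa \asymp \dist(\mathfrak{x},\Sigma)^{-1}$, with the implied constant being exactly the weight $C(\mathfrak{x})$---which must be shown to degenerate only on a set of sufficiently high codimension, with a reciprocal of controlled moments. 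Converting this, together with the codimension bound, into the finiteness of $\mean[\dist(\mathfrak{x},\Sigma)^{-q}]$ is then a standard integral-geometric (coarea / tube) computation in the spirit of B\"urgisser and Cucker: pull back along the parametrization of $\Var{S}^{\times r}$ by the Gaussian factor vectors, so that the preimage of $\Sigma$ is a semialgebraic set of the same codimension in a standard Gaussian space, and bound the Gaussian measure of its $\epsilon$-tube by combining a tube-formula estimate on the intersection with a ball of radius $R \sim \sqrt{\log(1/\epsilon)}$---where the $(\dim-\operatorname{codim})$-dimensional volume of a bounded-degree semialgebraic set grows polynomially in $R$---with the Gaussian tail outside that ball, the lower-dimensional and more degenerate strata of $\Sigma$ being absorbed into higher-order error terms.

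The main obstacle is the codimension computation. The ``shared factor'' strata are transparent, but a complete proof must rule out every other configuration of non-transverse tangent spaces to the Segre manifold, in particular excluding any codimension-one component of $\Sigma$ as $r$ approaches the perfect rank; this is a delicate question about degeneracy loci of secant varieties of Segre varieties, intimately connected to the weak-defectivity and generic-identifiability theory of Chiantini--Ottaviani and to the known list of exceptional formats, and it is this difficulty---not the probabilistic machinery---that is responsible for the hypothesis $r \le r^\star$. The reverse condition number estimate is a secondary but genuine obstacle---it is precisely the inequality that this paper leaves open---and, on top of both, one must tame the non-compactness of $\Var{S}^{\times r}$: the Riemannian density of the Gaussian pushforward blows up near small factor norms and $\Sigma$ is unbounded, both of which are controllable exactly because $n_3 \ge 3$ keeps the relevant negative moments of the factor norms finite.
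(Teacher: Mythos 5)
The statement you are trying to prove is not a theorem in this paper: it is stated as a \emph{conjecture}, supported only by the impossibility result of \cref{prop_impossible} (a Macaulay2 computation for $r=2$ and $n_i\le 10$), the Monte Carlo evidence of \cref{sec_numex_distribution}, and the explicit admission in \cref{sec_conclusions} that ``the strategy presented in this article cannot prove that the average condition number is finite \dots provided that one can establish a local converse to \cref{cnt}.'' Your proposal does not close this gap; it reformulates it. The two pillars of your plan are exactly the two open ingredients the paper identifies. First, the reverse estimate $\kappa(\mathfrak{x}) \le C(\mathfrak{x})\,\dist(\mathfrak{x},\Sigma)^{-1}$ is precisely the missing converse to \cref{cnt}: the paper proves only the one-sided inequality $\kappa \ge \dist_{\mathrm{w}}^{-1}$, and the experiments of \cref{sec_numex_distance} merely suggest a local reverse bound with a constant depending on the base point. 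Your second-order-vanishing heuristic for the smallest eigenvalue of the Gram matrix is plausible near a \emph{generic} smooth point of $\Sigma$, but you give no argument that the constant $C(\mathfrak{x})$ degenerates only on a high-codimension set, nor any control of its negative moments, and without that the H\"older step collapses.

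Second, the codimension claim $\operatorname{codim}_{\Var{S}^{\times r}}\Sigma \ge 2$ for $n_3\ge 3$ is asserted, not proved. The paper explicitly states it does not know whether the codimension of $\overline{\Sigma_\Pj}$ is one (see the discussion before \cref{prop_impossible} and in \cref{sec_numex_distribution}); \cref{lem_codim_one} only exhibits a component of codimension $n_3-1$, and \cref{prop_impossible} rules out smaller codimension only for $r=2$ and small formats, by computer algebra in affine charts rather than by the stratification you sketch. Your ``dominant stratum'' analysis reproduces the construction of \cref{lem_codim_one}, but the crucial step---showing that \emph{no other} configuration of degenerate tangent spaces produces a codimension-one component for the relevant range of $r$---is exactly the hard algebro-geometric question you yourself flag as the main obstacle and leave untouched. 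So the proposal is a reasonable research program (and, to its credit, correctly identifies that the conjecture only requires \emph{some} $r^\star\ge 2$, so $r=2$ would suffice), but it is not a proof: both of its load-bearing claims are unestablished, and they coincide with the open problems the paper poses rather than resolving them.
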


This would suggest there exists a gap in sensitivity (which is one measure of complexity, as explained above) between $n_1 \times n_2 \times 2$ tensors or pairs of $n_1 \times n_2$ matrices, where the average condition number is proved to be $\infty$, and more general $n_1 \times n_2 \times n_3$ tensors with $n_1, n_2, n_3 > 2$, where all empirical and theoretical evidence points to a finite average condition number. This is similar to the gap in classic complexity between order-$2$ tensors and order-$d$ tensors with $d \ge 3$ for computing the tensor rank. It is noteworthy that increasing the size of the tensor seems to decrease the complexity of computing the CPD.

\subsection*{Statement of the technical contributions}
We proved in \cite[Theorem 1.3]{BV2017} that the condition number of the CPD is equal to the distance to ill-posedness in an \emph{auxiliary space}: according to the theorem the condition number of the CPD $\kappa(\tensor A_1,\ldots,\tensor A_r)$ at a decomposition $(\tensor A_1,\ldots,\tensor A_r)\in\Var S^{\times r}$ is equal to the inverse distance of the tuple of tangent spaces $(\Tang{\tensor A_1}{\Var{S}},\ldots, \Tang{\tensor A_r}{\Var{S}})$ to ill-posedness:
\begin{equation}\label{characterization123}
\kappa(\tensor A_1,\ldots,\tensor A_r) = \frac{1}{\dist_\mathrm{P}((\Tang{\tensor A_1}{\Var{S}},\ldots, \Tang{\tensor A_r}{\Var{S}}) ,\Sigma_\Gr)},
\end{equation}
where $\Sigma_\Gr$ and the distance $\dist_\mathrm{P}$ are defined as follows.
Let $n:=\dim \Var S$ and write $\Pi:=n_1\cdots n_d$ for the dimension of $\R^{n_1\times \cdots \times n_d}$. Denote by $\Gr(\Pi,n)$ the \emph{Grassmann manifold} of $n$-dimensional linear spaces in the space of tensors $\R^{n_1\times \cdots \times n_d}\cong \R^\Pi$. Then, the tuple of tangent spaces to $\Var S$ at the decomposition $(\tensor A_1,\ldots,\tensor A_r)$ is an element in the product of Grassmannians: $(\Tang{\tensor A_1}{\Var{S}},\ldots, \Tang{\tensor A_r}{\Var{S}})\in \Gr(\Pi,n)^{\times r}$. The set~$\Sigma_\Gr$ in \refeqn{characterization123} is then defined as the $r$-tuples of linear spaces that are not in general position. In formulas:
\begin{equation}\label{GrSigma}
\GrSigma:= \cset{(W_1,\ldots,W_r)\in \Gr(\Pi,n)^{\times r}}{\dim (W_1+\cdots+ W_r) < rn }.
\end{equation}
The distance measure in \refeqn{characterization123} is the \emph{projection distance} on $\Gr(\Pi,n)$. It is defined as $\Vert \mathrm{pr}_{V}-\mathrm{pr}_{W}\Vert$, where $\mathrm{pr}_{V}$ and $\mathrm{pr}_{W}$ are the orthogonal projections on the spaces $V$ and~$W$ respectively, and $\Vert \cdot\Vert$ is the spectral norm. This distance is extended to $\Gr(\Pi,n)^{\times r}$ in the usual~way:
 \begin{equation}\label{projection_distance}\dist_{\mathrm{P}}((V_1,\ldots,V_r), (W_1,\ldots,W_r)):=\sqrt{\sum_{i=1}^r \Vert \pi_{V_i}-\pi_{W_i}\Vert^2}.\end{equation}

The decomposition $(\tensor{A}_1,\ldots,\tensor{A}_r)$ whose corresponding tangent space lies in $\Sigma_\Gr$ is \textit{ill-posed} in the following sense. It was shown in \cite[Corollary 1.2]{BV2017} that whenever there is a smooth curve~\(
\gamma(t) = (\tensor{A}_1(t), \ldots, \tensor{A}_r(t))
\)
such that
\(\tensor{A} = \sum_{i=1}^r \tensor{A}_i(t)
\)
is constant, even though $\gamma'(0)~\ne~0$, then all of the decompositions $(\tensor{A}_1(t), \ldots, \tensor{A}_r(t))$ of $\tensor{A}$ are ill-posed decompositions. Note that in this case, the tensor $\tensor{A}$ thus has a family of decompositions running through $(\tensor{A}_1(0), \ldots, \tensor{A}_r(0))$. We say that $\tensor{A}$ is not \emph{locally $r$-identifiable}.  Tensors are expected to admit only a finite number of decompositions, generically (for the precise statements see, e.g., \cite{AOP2009,CO2012,BCO2014,COV2014}). Therefore, tensors that are not locally $r$-identifiable are very special as their parameters cannot be identified uniquely. Ill-posed decompositions are exactly those that, \emph{using only first-order information, are indistinguishable from decompositions that are not locally $r$-identifiable}.

In this article,
we relate the condition number to a metric on the data space $\Var S^{\times r}$; see \cref{cnt}. Following \cite{Demmel1987}, we then use this result and show in \cref{thm_expected_value} that the expected value of the condition number is infinite whenever the ill-posed locus in $\Var S^{\times r}$ is of codimension 1.
To describe the condition number as an inverse distance to ill-posedness on $\Var{S}^{\times r}$ we need to consider an angular distance. This is why the main theorem of this article, \cref{cnt}, is naturally stated in projective space.

\begin{thm}\label{cnt}
Denote by $\pi: \R^{n_1\times\cdots\times n_d}\backslash\{0\} \to \mathbb{P}(\R^{n_1\times\cdots\times n_d})$ the canonical projection onto projective space. We put $\mathbb{P}\Var{S}:=\pi(\Var S)$ and for tensors $\tensor A\in\R^{n_1\times\cdots\times n_d}$ we denote the corresponding class in projective space by $[\tensor A]:=\pi(\tensor A)$. Let $(\tensor A_1,\ldots,\tensor A_r)\in\Var{S}^{\times r}$. Then,
$$
\kappa(\tensor A_1,\ldots,\tensor A_r)
\geq \frac{1}{\dist_{\mathrm{w}}(([\tensor A_1],\ldots,[\tensor A_r]), \Sigma_\mathbb{P})},
$$
where
$$
\Sigma_\mathbb{P}=\cset{([\tensor A_1],\ldots,[\tensor A_r])\in (\mathbb{P}\Var{S})^{\times r}}{\kappa(\tensor A_1,\ldots, \tensor A_r)=\infty}
$$
and the distance $\dist_{\mathrm{w}}$ is defined in \cref{weighted_distance}.
\end{thm}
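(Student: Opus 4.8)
The plan is to reduce the statement, via the identity \refeqn{characterization123}, to an elementary Lipschitz estimate for the Gauss map $\tensor A\mapsto\Tang{\tensor A}{\Var{S}}$ of the Segre manifold. Since \refeqn{characterization123} reads $\kappa(\tensor A_1,\ldots,\tensor A_r)=\dist_{\mathrm{P}}\bigl((\Tang{\tensor A_1}{\Var{S}},\ldots,\Tang{\tensor A_r}{\Var{S}}),\Sigma_\Gr\bigr)^{-1}$, the asserted inequality is equivalent to
\[
\dist_{\mathrm{P}}\bigl((\Tang{\tensor A_1}{\Var{S}},\ldots,\Tang{\tensor A_r}{\Var{S}}),\Sigma_\Gr\bigr)\;\le\;\dist_{\mathrm{w}}\bigl(([\tensor A_1],\ldots,[\tensor A_r]),\Sigma_\mathbb{P}\bigr).
\]
If $\Sigma_\mathbb{P}=\emptyset$ the right-hand side is $+\infty$ by convention and there is nothing to prove; otherwise pick any $([\tensor B_1],\ldots,[\tensor B_r])\in\Sigma_\mathbb{P}$. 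By definition $\kappa(\tensor B_1,\ldots,\tensor B_r)=\infty$, so \refeqn{characterization123} gives $\dist_{\mathrm{P}}\bigl((\Tang{\tensor B_i}{\Var{S}})_{i},\Sigma_\Gr\bigr)=0$; since $\Sigma_\Gr$ is closed (its complement, the locus of $r$-tuples of subspaces in general position, is open) this forces $(\Tang{\tensor B_1}{\Var{S}},\ldots,\Tang{\tensor B_r}{\Var{S}})\in\Sigma_\Gr$. As $\dist_{\mathrm{P}}$ is a metric on $\Gr(\Pi,n)^{\times r}$, the triangle inequality yields
\[
\dist_{\mathrm{P}}\bigl((\Tang{\tensor A_i}{\Var{S}})_{i},\Sigma_\Gr\bigr)\;\le\;\dist_{\mathrm{P}}\bigl((\Tang{\tensor A_i}{\Var{S}})_{i},(\Tang{\tensor B_i}{\Var{S}})_{i}\bigr)=\Bigl(\textstyle\sum_{i=1}^{r}\Norm{\mathrm{pr}_{\Tang{\tensor A_i}{\Var{S}}}-\mathrm{pr}_{\Tang{\tensor B_i}{\Var{S}}}}^{2}\Bigr)^{1/2}.
\]

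Both $\dist_{\mathrm{P}}$ and, by its definition in \cref{weighted_distance}, $\dist_{\mathrm{w}}$ aggregate their $r$ coordinates as an $\ell^{2}$-sum, so after taking the infimum over $([\tensor B_i])_{i}\in\Sigma_\mathbb{P}$ it suffices to prove the single-tensor estimate: for rank-one tensors $\tensor A=\sten{a}{}{1}\otimes\cdots\otimes\sten{a}{}{d}$ and $\tensor B=\sten{b}{}{1}\otimes\cdots\otimes\sten{b}{}{d}$,
\[
\Norm{\mathrm{pr}_{\Tang{\tensor A}{\Var{S}}}-\mathrm{pr}_{\Tang{\tensor B}{\Var{S}}}}\;\le\;\sum_{j=1}^{d}\sin\angle\bigl(\sten{a}{}{j},\sten{b}{}{j}\bigr),
\]
whose right-hand side is exactly the contribution of $[\tensor A],[\tensor B]$ to $\dist_{\mathrm{w}}$ (up to the normalizing weights of \cref{weighted_distance}). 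Here we may normalize all $\sten{a}{}{j},\sten{b}{}{j}$ to unit length, since $\Tang{\tensor A}{\Var{S}}$ is invariant under rescaling $\tensor A$, and $\sin\angle(\cdot,\cdot)$ is insensitive to the sign ambiguity of the factorization; so the right-hand side is well defined.

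To prove the single-tensor estimate I would interpolate one mode at a time. Put $\tensor C_{k}:=\sten{b}{}{1}\otimes\cdots\otimes\sten{b}{}{k}\otimes\sten{a}{}{k+1}\otimes\cdots\otimes\sten{a}{}{d}$, so $\tensor C_{0}=\tensor A$, $\tensor C_{d}=\tensor B$ and consecutive $\tensor C_{k-1},\tensor C_{k}$ differ only in the $k$-th factor; by the triangle inequality it is enough to bound $\Norm{\mathrm{pr}_{\Tang{\tensor C_{k-1}}{\Var{S}}}-\mathrm{pr}_{\Tang{\tensor C_{k}}{\Var{S}}}}$ by $\sin\angle(\sten{a}{}{k},\sten{b}{}{k})$. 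Since $\mathrm{O}(n_1)\times\cdots\times\mathrm{O}(n_d)$ acts on $\R^{n_1\times\cdots\times n_d}$ by isometries through $\bigotimes_{j}Q_{j}$, preserving $\dist_{\mathrm{P}}$ and commuting with the Gauss map, we may rotate so that the factors common to $\tensor C_{k-1}$ and $\tensor C_{k}$ all become $\vect{e}_{1}$. Writing $\vect{w}:=\sten{a}{}{k}$, $\vect{w}':=\sten{b}{}{k}$ and identifying $\R^{n_1\times\cdots\times n_d}\cong\bigl(\bigotimes_{l\neq k}\R^{n_l}\bigr)\otimes\R^{n_k}$, a direct computation shows that for $\tensor C\in\{\tensor C_{k-1},\tensor C_{k}\}$ with $k$-th factor $\vect{w}$ one has the orthogonal decomposition $\Tang{\tensor C}{\Var{S}}=N\oplus\bigoplus_{j\neq k}\bigl(L_{j}\otimes\R\vect{w}\bigr)$, in which the ``mode-$k$ slice'' $N$ and the fixed $(n_{j}-1)$-dimensional spaces $L_{j}\subset\bigotimes_{l\neq k}\R^{n_l}$ are independent of the $k$-th factor, and in which $N$ together with the spaces $L_{j}\otimes\R\vect{w}$ (for distinct $j\ne k$ and for $\vect{w}$ equal to either of $\sten{a}{}{k},\sten{b}{}{k}$) are pairwise orthogonal. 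Hence
\[
\mathrm{pr}_{\Tang{\tensor C_{k-1}}{\Var{S}}}-\mathrm{pr}_{\Tang{\tensor C_{k}}{\Var{S}}}=\sum_{j\neq k}\mathrm{pr}_{L_{j}}\otimes\bigl(\vect{w}\vect{w}^{T}-\vect{w}'\vect{w}'^{T}\bigr),
\]
a sum of operators with pairwise orthogonal domains and ranges, each of spectral norm $\Norm{\vect{w}\vect{w}^{T}-\vect{w}'\vect{w}'^{T}}=\sin\angle(\vect{w},\vect{w}')$; therefore $\Norm{\mathrm{pr}_{\Tang{\tensor C_{k-1}}{\Var{S}}}-\mathrm{pr}_{\Tang{\tensor C_{k}}{\Var{S}}}}\le\sin\angle(\sten{a}{}{k},\sten{b}{}{k})$, and summing over $k$ completes the estimate.

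The main obstacle is this last paragraph: carrying out the explicit orthogonal decomposition of $\Tang{\tensor C}{\Var{S}}$ after a single factor is varied, verifying the stated orthogonalities, and then checking that the resulting sum-of-sines bound matches the definition of $\dist_{\mathrm{w}}$ in \cref{weighted_distance} --- including whatever normalizing weights it carries and the way the $d$-fold triangle inequality accumulates. One should also confirm the routine reductions (unit-norm factors, the sign and scaling ambiguity, the case $\Sigma_\mathbb{P}=\emptyset$) and that all tangent spaces involved have the common dimension $n$, so that the projection distance equals the sine of the largest principal angle and the triangle inequalities above are legitimate.
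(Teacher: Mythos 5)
Your global frame is fine: invoking \cref{characterization123}, picking $([\tensor B_1],\ldots,[\tensor B_r])\in\Sigma_\mathbb{P}$ whose tangent tuple lies in $\Sigma_\Gr$ (closedness is a correct argument), and reducing to a componentwise comparison because both distances aggregate their $r$ coordinates as $\ell^2$ sums. Your mode-by-mode computation is also essentially correct and yields $\Norm{\mathrm{pr}_{\Tang{\tensor A}{\Var S}}-\mathrm{pr}_{\Tang{\tensor B}{\Var S}}}\le\sum_{j=1}^d\sin\theta_j$, with $\theta_j$ the angle between $\sten{a}{}{j}$ and $\sten{b}{}{j}$. The genuine gap is the final step, which you flagged yourself: this $\ell^1$ sum of sines is \emph{not} ``exactly the contribution of $[\tensor A],[\tensor B]$ to $\dist_{\mathrm w}$ up to normalizing weights.'' By \cref{weighted_distance} that contribution is the weighted $\ell^2$ quantity $\bigl(\sum_{j=1}^d(n-n_j)\,\theta_j^2\bigr)^{1/2}$, where $n-n_j=\sum_{i\neq j}(n_i-1)$, and the inequality $\sum_j\sin\theta_j\le\bigl(\sum_j(n-n_j)\theta_j^2\bigr)^{1/2}$ is simply false for some formats covered by the theorem: with all $n_i=2$ and $\theta_1=\cdots=\theta_d=\theta$ small, the left side is $\approx d\,\theta$ while the right side is $\sqrt{d(d-1)}\,\theta<d\,\theta$; likewise for $d=2$, $n_1=3$, $n_2=2$ one gets $\approx 2\theta$ versus $\sqrt3\,\theta$. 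So the chain of inequalities does not close. The loss is caused precisely by interpolating one mode at a time and adding spectral norms: the perturbations coming from different modes combine orthogonally (a principal-angle, $\ell^2$-type structure), and the triangle inequality discards exactly the weighted quadratic form that $\dist_{\mathrm w}$ encodes.

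The missing ingredient is the substantive content of the paper's proof: Proposition \ref{isometry_thn} shows, via the Pl\"ucker embedding, that $[\tensor A]\mapsto\Tang{\tensor A}{\Var S}$ is an isometric immersion from $\Pj\Var S$ with the weighted metric into $\Gr(\Pi,n)$ with its standard Riemannian metric; combining \cref{appendix:isometry_importan_lemma}(3) with $\lvert\sin\theta\rvert\le\lvert\theta\rvert$ applied to principal angles then gives $\dist_{\mathrm w}\ge\dist_{\mathrm R}\ge\dist_{\mathrm P}$, which is the per-component bound your argument needs, namely $\Norm{\mathrm{pr}_{\Tang{\tensor A}{\Var S}}-\mathrm{pr}_{\Tang{\tensor B}{\Var S}}}\le d_{\mathrm w}(\sigma^{-1}(\tensor A),\sigma^{-1}(\tensor B))$. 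Any repair of your route would have to establish this weighted $\ell^2$ estimate directly, which is essentially the isometry computation you were trying to bypass.
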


This characterization of a condition number as an inverse distance to ill-posedness is a called \emph{condition number theorem} in the literature and it provides a geometric interpretation of complexity of a computational problem. \cite{Demmel1987} advocates this characterization as it may be used to ``compute the probability distribution of the distance from a `random' problem to the set [of ill-posedness].'' Condition number theorems were, for instance, derived for matrix inversion \cite{kahan,eckart_young,demmel2}, polynomial zero finding \cite{Hough,demmel2}, and computing eigenvalues \cite{WILKINSON,demmel2}.
For a comprehensive overview see \cite[pages~10,~16,~125,~204]{condition}.
We use the above condition number theorem to derive a result on the average condition number of CPDs.

\begin{thm}\label{thm_expected_value}
Let $(\tensor{A}_1,\ldots,\tensor{A}_r)\in \Var S^{\times r}$, $r \ge 2$, be a random rank-1 tuple in $\R^{n_1\times\cdots\times n_d}$. Let $e \ge c \ge 1$. If~$\Sigma_\Pj$ contains a manifold of codimension $0$ or $c$ in $\Var S^{\times r}$, then
$\mathbb{E}\,\bigl[ \kappa(\tensor{A}_1,\ldots,\tensor{A}_r)^{e} \bigr] = \infty.$
\end{thm}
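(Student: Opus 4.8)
\emph{Proof strategy.} The plan is to run the classical argument of \cite{Demmel1987}: upgrade the condition number theorem \cref{cnt} to the pointwise bound $\kappa^{e}\ge\dist_{\mathrm{w}}(\cdot,\Sigma_\Pj)^{-e}$, bound from below the probability that the projectivized random tuple lands in an $\epsilon$-neighbourhood of $\Sigma_\Pj$, and then integrate the resulting tail. Put $D:=r(n_1+\cdots+n_d)$, let $\R^{D}_{\ast}\subseteq\R^{D}$ be the full-measure open set where no factor vanishes, and let $\phi:\R^{D}_{\ast}\to G:=(\mathbb{P}\Var S)^{\times r}$ send the Gaussian factor vectors $(\sten{a}{i}{j})_{i,j}$ to $([\tensor A_1],\ldots,[\tensor A_r])$, where $\tensor A_i:=\sten{a}{i}{1}\otimes\cdots\otimes\sten{a}{i}{d}$. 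Then $\phi$ is a submersion: the differential of $(\sten{a}{}{1},\ldots,\sten{a}{}{d})\mapsto\sten{a}{}{1}\otimes\cdots\otimes\sten{a}{}{d}$ surjects onto the tangent space of $\Var S$ at the corresponding point, and projectivization restricted to the cone $\Var S$ stays a submersion. With $\gamma$ the standard Gaussian density on $\R^{D}$, the law of $([\tensor A_1],\ldots,[\tensor A_r])$ on $G$ is $\mu:=\phi_{\ast}\gamma$. Recall that $\kappa$ descends to $G$ by the scaling invariance \cite[Proposition~7.1]{BV2017}, and that $\Sigma_\Pj=\{\kappa=\infty\}\subseteq G$.

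If $\Sigma_\Pj$ contains a nonempty open subset $M$ of $G$ (codimension $0$), then $\phi^{-1}(M)$ is a nonempty open subset of $\R^{D}_{\ast}$, so $\mu(M)=\gamma(\phi^{-1}(M))>0$; since $\kappa\equiv\infty$ on $M$, already $\mathbb{E}[\kappa^{e}]=\infty$. Now suppose $\Sigma_\Pj$ contains a submanifold of codimension $c$. As $\phi$ is a submersion, its preimage is a codimension-$c$ submanifold of $\R^{D}_{\ast}$; fix inside it a small closed coordinate ball $N$, a compact submanifold-with-boundary of codimension $c$ with $\phi(N)\subseteq\Sigma_\Pj$. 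Being smooth, $\phi$ is $L$-Lipschitz (for the Riemannian distance $\dist_g$ of $G$) on a bounded neighbourhood of $N$, and the Euclidean tube estimate for the compact $N$ gives $\mathrm{Leb}\{z:\dist_{\R^{D}}(z,N)\le\rho\}\ge c_1\rho^{c}$ for $\rho$ small; since $\gamma$ is bounded below on this compact tube, $\gamma\{z:\dist_{\R^{D}}(z,N)\le\rho\}\ge c_1'\rho^{c}$. From $\{z:\dist_{\R^{D}}(z,N)\le\rho\}\subseteq\phi^{-1}\{x:\dist_g(x,\phi(N))\le L\rho\}$ we obtain $\mu\{x:\dist_g(x,\phi(N))\le\epsilon\}\ge c_1'(\epsilon/L)^{c}$ for $\epsilon$ small. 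Finally, by \cref{weighted_distance} the weighted distance is bounded above near $\phi(N)$ by a constant $C>0$ times $\dist_g$, so $\dist_{\mathrm{w}}(x,\Sigma_\Pj)\le\dist_{\mathrm{w}}(x,\phi(N))\le C\,\dist_g(x,\phi(N))$; hence, writing $X:=\dist_{\mathrm{w}}(([\tensor A_1],\ldots,[\tensor A_r]),\Sigma_\Pj)$, we get
$$
\Prob[X\le\epsilon]\ \ge\ \mu\{x:\dist_g(x,\phi(N))\le\epsilon/C\}\ \ge\ c_2\,\epsilon^{c},\qquad 0<\epsilon\le\epsilon_0 .
$$

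It remains to integrate. By \cref{cnt} we have $\kappa^{e}\ge X^{-e}$, so
$$
\mathbb{E}\bigl[\kappa^{e}\bigr]\ \ge\ \mathbb{E}\bigl[X^{-e}\bigr]\ =\ \int_0^\infty\Prob\bigl[X<t^{-1/e}\bigr]\,\d t\ \ge\ \int_{(2/\epsilon_0)^{e}}^\infty c_2\,2^{-c}\,t^{-c/e}\,\d t,
$$
where the last inequality inserts the tail bound with $\epsilon=t^{-1/e}/2\le\epsilon_0$. Since $c\le e$ we have $c/e\le1$, so this integral diverges and $\mathbb{E}[\kappa^{e}]=\infty$. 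The delicate point I anticipate is making the tube estimate uniform together with the $\dist_{\mathrm{w}}$-versus-$\dist_g$ comparison: one must exhibit a single compact piece $N$ of the preimage of $\Sigma_\Pj$ on which $\gamma$ is bounded below, $\phi$ is Lipschitz, and $\dist_{\mathrm{w}}\le C\,\dist_g$ holds simultaneously; granting this, the rest is the routine Demmel-type integration.
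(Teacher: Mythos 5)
Your proposal is correct, and it follows the same Demmel-type master plan as the paper (condition number bounded below by the inverse weighted distance to $\Sigma_\Pj$ via \cref{cnt}, plus a codimension-$c$ piece of $\Sigma_\Pj$ forcing divergence), but the measure-theoretic implementation is genuinely different. The paper first uses scale invariance to replace the Gaussian factors by uniform points on projective spaces, then invokes \cref{sigma_det} (the Segre parametrization has unit Jacobian, in fact is an isometry) to rewrite the expectation as a deterministic integral over the compact manifold $(\Pj\Var S)^{\times r}$, bounds $\dist_\mathrm{w}\le\sqrt{n}\,\dist_\Pj$, and concludes by \cref{prop:inversedistance}, a local-coordinates/polar-coordinates computation showing $\int\dist(\cdot,U)^{-c}=\infty$ for a codimension-$c$ submanifold $U$; the passage from exponent $c$ to $e$ uses $\kappa\ge1$ (\cref{lemma_bound}). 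You instead keep the Gaussian, observe that the factor-to-tuple map is a surjective submersion onto $(\Pj\Var S)^{\times r}$, pull the codimension-$c$ manifold back, and derive a small-ball estimate $\Prob[\dist_\mathrm{w}(\cdot,\Sigma_\Pj)\le\epsilon]\ge c_2\epsilon^{c}$ from a Euclidean tube-volume lower bound, a lower bound on the Gaussian density on a compact tube, Lipschitzness of the parametrization, and compactness-based comparability of $\dist_\mathrm{w}$ with the reference metric; the layer-cake integration with $c/e\le1$ then yields divergence, absorbing the exponent upgrade without needing $\kappa\ge1$. Your route buys robustness: it needs neither the Jacobian-$1$/isometry property of the Segre map nor the uniform-distribution reduction, only positivity of the density and a submersive parametrization, so it would extend verbatim to other absolutely continuous factor distributions; the paper's route buys a cleaner reduction to a deterministic integral on a compact Riemannian manifold, reusing machinery (the isometry of $\sigma$) already needed for \cref{cnt}. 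The only points you flag as delicate (uniform tube bound, Lipschitz and metric comparisons on one compact piece) are indeed all settled by working on a fixed compact neighborhood of your chosen $N$, so the argument goes through.
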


In Section~\ref{sec_expected_value}, we prove that for the format $n_1 \times n_2 \times n_3$, $n_1 \ge n_2 \ge n_3 \ge 2$, the ill-posed locus $\Sigma_\mathbb{P}$ contains a submanifold that is of codimension $n_3-1$ in $\Var S^{\times r}$. Hence, the aforementioned \cref{cor_expected_value} is obtained as a consequence of \cref{thm_expected_value}.

\begin{rem}
The statement of \cref{cor_expected_value} can easily be strengthened as follows. It is known from dimensionality arguments about fibers of projections of projective varieties that there exists an integer critical value $r^\star \le \frac{\dim \R^{n_1 \times \cdots \times n_d}}{\dim \Var{S}}$ such that every tensor of rank $r > r^\star$ has at least a $1$-dimensional variety of rank decompositions in $\Var{S}^{\times r}$; see, e.g., \cite{AOP2009,Harris1992,Landsberg2012}. Specifically,~$r^\star$ is the smallest value such that the dimension of the projective $(r^\star+1)$-secant variety of $\mathbb{P}(\Var{S})$ is strictly less than $(r^\star + 1) \dim \Var{S} - 1$. It follows then from \cite[Corollary 1.2]{BV2017} that the condition number $\kappa(\tensor{A}_1,\ldots,\tensor{A}_r)=\infty$ for \emph{all} decompositions $(\tensor{A}_1,\ldots,\tensor{A}_r)$ when $r > r^\star$. For smaller values of $r$, we can only prove the statement in \cref{cor_expected_value}.
\end{rem}

\subsection*{Structure of the article}
The rest of this paper is structured as follows. In the next section, we recall some preliminary material on Riemannian geometry. We start by proving the main contribution in \cref{sec_expected_value}, namely \cref{thm_expected_value}, because its proof is less technical. \Cref{sec:proof} is devoted to the proof of the condition number theorem, namely \cref{cnt}. In \cref{sec:experiments}, we present some numerical experiments and computer algebra computations illustrating the main contributions. Finally, the paper is concluded in \cref{sec_conclusions}.

\subsection*{Acknowledgements} We thank C. Beltr\'an for pointing out \cref{prop:inversedistance} to us, so that we could use \cref{cnt} to obtain \cref{thm_expected_value}. We like to thank P. B\"urgisser for carefully reading through the proof of \cref{isometry_thn}. Anna Seigal is thanked for discussions relating to \cref{lem_codim_one}, which she discovered independently. Some parts of this work are also part of the PhD thesis \cite{Breiding2017} of the first author.

\section{Preliminaries and notation} \label{sec_preliminaries}
We denote the standard Euclidean inner product on $\mathbb{R}^m$ by $\langle\cdot,\cdot\rangle$.
The real projective space of dimension $m-1$ is denoted by $\Pj(\R^m)$ and the unit sphere of dimension~$m-1$ is denoted by $\mathbb{S}(\R^m)$. Points in linear spaces are typeset in bold-face lower-case symbols like $\vect{a},\vect{x}$. Points in projective space or other manifolds are typeset in lower-case letters like $a,x$. The \emph{orthogonal complement} of a point $\vect{x}\in \R^m$ is~$\vect{x}^\perp:=\{\vect{y}\in\R^m \mid \langle \vect{x},\vect{y}\rangle =0\}$.
We write $\Var S$ for the Segre manifold in $\R^{n_1\times \cdots \times n_d}$. If it is necessary to clarify the parameters, we also write $\Var S_{n_1,\ldots,n_d}$ .
Throughout this paper, $n$ denotes the dimension of $\Var S$:
\begin{equation}\label{dim_segre}
  n := \dim \Var S_{n_1,\ldots,n_d} = 1-d+\sum_{i=1}^d n_i;
  \end{equation}
see \cite{Harris1992,Landsberg2012}.
The projective Segre map is
\begin{align} \label{eqn_def_sigma}
  \sigma : \mathbb{P}(\R^{n_1})\times \cdots \times \mathbb{P}(\R^{n_d}) \to \mathbb{P}\Var{S},\; ([\sten{a}{}{1}],\ldots,[\sten{a}{}{d}])\mapsto[\sten{a}{}{1}\otimes \cdots \otimes \sten{a}{}{d}];
  \end{align}
see \cite[Section 4.3.4.]{Landsberg2012}.

Let $(M,g)$ be a Riemannian manifold. For $x\in M$ we write $\Tang{x}{M}$ for the tangent space of $M$ at~$x$. For~$\gamma : (-1,1) \to M$ a smooth curve in $M$ we will use the shorthand notations $\gamma'(0) := \frac{\mathrm{d}}{\mathrm{d}t}|_{t=0} \gamma(t)$ for the tangent vector in~$\Tang{\gamma(0)}{M}$ and $\gamma'(t) := \frac{\mathrm{d}}{\mathrm{d}t} \gamma(t)$. Recall that
the \emph{Riemannian distance} between two points $p,q\in M$ is $\dist_M(p,q)= \inf\cset{l(\gamma)}{\gamma(0)=p,\gamma(1)=q}$. The infimum is over all piecewise differentiable curves $\gamma:[0,1]\to M$ and the length of a curve is
$
l(\gamma)=\int_0^1 g(\gamma'(t), \gamma'(t))^\frac{1}{2} \,\d t
$.
The distance $\dist_M$ makes~$M$ a metric space \cite[Proposition 2.5]{riemannian_geometry}.

We use the symbol $\vert \omega\vert$ to denote the \emph{density} on $M$ given by $g$ \cite[Proposition 16.45]{Lee2013}.
For densities with finite volume, i.e., $\int_M \vert\omega\vert <\infty$, this defines the \emph{uniform distribution}:
$$\Prob_{X \text{ uniformly in } M}\{X\in N\} := \frac{1}{\int_M \vert\omega\vert} \,\int_N \vert\omega\vert\quad \text{ where } N\subset M.$$

A particularly important manifold in the context of this article is the projective space $\Pj(\R^m)$. An atlas for $\Pj(\R^m)$ is, for instance, given by the affine charts $(U_i,\varphi_i)$ with $U_i = \{(x_0:\ldots:x_m)\mid x_i\neq 0\}$ and $\varphi_i(x_0:\ldots:x_m) = (\tfrac{x_0}{x_i},\ldots,\tfrac{x_{i-1}}{x_i},\tfrac{x_{i+1}}{x_i}, \ldots, \tfrac{x_m}{x_i})$. A Riemannian structure on $\mathbb{P}(\R^n)$ is the \emph{Fubini--Study metric}; see, e.g., \cite[Section 14.2.2]{condition}: the tangent space to $x$ can be identified with
\begin{equation}\label{tangent_proj}
  \Tang{ x}{\mathbb{P}(\R^n)\cong \vect{x}^\perp}, \quad\text{ where $\vect x \in x$ is a representative;}
\end{equation}
and through this identification the Fubini--Study metric is $g(\vect y_1,\vect y_2):=\frac{\langle \vect y_1,\vect y_2\rangle}{\Vert \vect x\Vert}$. The \emph{Fubini--Study} distance~$d_\Pj$ is the distance associated to the Fubini--Study metric. For points $x,y \in \mathbb{P}(\R^n)$ the formula is
  $$d_\Pj(x,y) = \frac{\vert \langle \vect x,\vect y\rangle\vert}{\Vert \vect x\Vert \Vert \vect y\Vert}, \quad \text{ where $\vect x\in x$, $\vect y\in y $ are representatives.}$$
For the Fubini--Study distance in $\Pj(\R^{n_1})\times\cdots\times \Pj(\R^{n_d})$ we write
\begin{equation}\label{product_fubini_study}
\mathrm{dist}_\Pj((x_1,\ldots,x_d),(y_1,\ldots,y_d)) := \sqrt{\sum_{i=1}^d d_\Pj(x_i,y_i)^2}.
\end{equation}

\begin{figure}[tb]\small
\begin{center}
\begin{tikzpicture}[scale=.8,thick,x=1cm]
  \draw (3,0) circle (2);
  \draw (-3,0) circle (1);
  \draw[red, very thick] (3,2) arc (90:135:2);
  \draw[black,dashed] (3,0) -- (1,2);
  \draw[black,->] (3,0) -- (3,2);
  \draw[black,->] (3,2) -- (1,2);
  \draw[red, very thick] (-3,1) arc (90:135:1);
  \draw[black,dashed] (-3,0) --(-4,1);
  \draw[black,->] (-3,0) -- (-3,1);
  \draw[black,->] (-3,1) -- (-4,1);
  \draw[] (-2.7,0.5) node {$x^1$};
  \draw[] (-3.275,1.30) node {$\Delta x^1$};
  \draw[] (-3.15,0.4) node {{\footnotesize $\phi$}};
  \draw[] (3.30,1) node {$x^2$};
  \draw[] (2,2.30) node {$\Delta x^2$};
  \draw[] (2.8,0.5) node {{\footnotesize $\phi$}};
  \draw[] (0,-2.5) node {$\tan \phi = \frac{\Norm{\Delta {x}^2}}{\Norm{{x}^1}}=\frac{\Norm{\Delta {x}^2}}{\Norm{{x}^2}}$};
\end{tikzpicture}
\end{center}
\caption{The picture depicts relative errors in the weighted distance, where ${x}^1 \in \Pj(\R^{n_1})$ and ${x}^2 \in \Pj(\R^{n_2})$ with $n_1 > n_2$. The relative errors of the tangent directions $\Delta {x}^1$ and $\Delta {x}^2$ are both equal to $\tan \phi$, but the contribution to the weighted distance marked in red is larger for the large circle, which corresponds to the smaller projective space $\Pj(\R^{n_2})$.}
\label{fig:weighted_distance}
\end{figure}
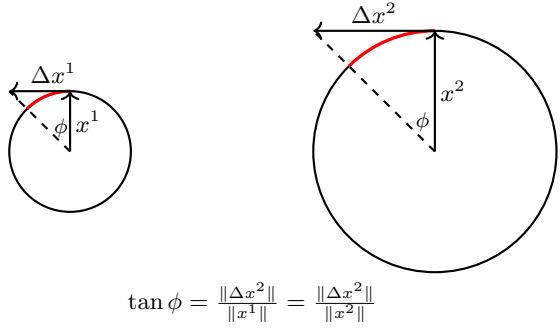

The weighted distance, which is the protagonist of \refthm{cnt}, is introduced next.
\begin{dfn}[Weighted distance]\label{weighted_distance}
The \emph{weighted distance} between two points $p=(p_1,\ldots,p_d)$ and $q=(q_1,\ldots,q_d)\in \mathbb{P}(\R^{n_1})\times \cdots \times \mathbb{P}(\R^{n_d})$ is defined as
$$
d_\mathrm{w}(p,q) :=  \sqrt{\sum_{i=1}^d (n-n_i) d_\mathbb{P}(p_i,q_i)^2},
$$
where, as before, $n=\dim \Var S$.
The weighted distance on $\Var S^{\times r}$ then is defined as
$$\dist_\mathrm{w}((\tensor A_1,\ldots, \tensor A_r),(\tensor B_1,\ldots, \tensor B_r)) : = \sqrt{\sum_{i=1}^r d_\mathrm{w}(\sigma^{-1}(\tensor A_i), \sigma^{-1}(\tensor B_i))^2},$$
where $\sigma^{-1}$ is the inverse of the projective Segre map from \cref{eqn_def_sigma}.
\end{dfn}
For $n_1 > n_2$ the relative errors in the factor~$\mathbb{P}(\R^{n_2})$ weigh more than relative errors in the factor $\mathbb{P}(\R^{n_1})$ when the measure is the weighted distance $d_\mathrm{w}$; this is illustrated in \cref{fig:weighted_distance}.

\section{The expected value of the condition number}\label{sec_expected_value}

Before proving \cref{thm_expected_value}, we need four auxiliary lemmata. The first provides a deterministic lower bound of the condition number.
\begin{lemma}\label{lemma_bound}
Let $r\geq 1$. For rank-1 tuples $(\tensor{A}_1,\ldots,\tensor{A}_r)$ in $\mathbb{R}^{n_1\times \cdots\times n_d}$ we have $\kappa(\tensor{A}_1,\ldots,\tensor{A}_r)\geq 1$.
\end{lemma}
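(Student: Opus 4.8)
The plan is to show $\kappa(\tensor A_1,\ldots,\tensor A_r) \ge 1$ by unwinding the characterization \refeqn{characterization123}, which writes the condition number as the inverse of the projection distance $\dist_\mathrm{P}$ from the tuple of tangent spaces $(\Tang{\tensor A_1}{\Var S},\ldots,\Tang{\tensor A_r}{\Var S}) \in \Gr(\Pi,n)^{\times r}$ to the ill-posed locus $\GrSigma$. Establishing $\kappa \ge 1$ is therefore equivalent to proving the (at first surprising) fact that the diameter of the relevant portion of $\Gr(\Pi,n)^{\times r}$ in the $\dist_\mathrm{P}$ metric is at most $1$, or more precisely that $\dist_\mathrm{P}\bigl((\Tang{\tensor A_1}{\Var S},\ldots,\Tang{\tensor A_r}{\Var S}), \GrSigma\bigr) \le 1$ for every rank-$1$ tuple. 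Since $\dist_\mathrm{P}$ on the product is the $\ell^2$-aggregate \refeqn{projection_distance} of the single-factor projection distances $\Vert \mathrm{pr}_V - \mathrm{pr}_W \Vert$ (spectral norm), the natural strategy is to exhibit, for a given tuple of tangent spaces, a single ``direction'' in which we can move one of them into $\GrSigma$ while keeping the per-factor projection distance controlled.

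\textbf{Key steps.} First, recall the elementary fact that for any two $n$-dimensional subspaces $V, W \subseteq \R^\Pi$ one has $\Vert \mathrm{pr}_V - \mathrm{pr}_W \Vert \le 1$, with equality iff $V \cap W^\perp \ne \{0\}$ (equivalently, some principal angle between $V$ and $W$ equals $\pi/2$); this is standard from the CS decomposition / principal-angle description of the projection distance. Second, I would construct a competitor tuple $(W_1,\ldots,W_r) \in \GrSigma$: keep $W_i = \Tang{\tensor A_i}{\Var S}$ for $i \ge 2$, and choose $W_1 \in \Gr(\Pi,n)$ to be any $n$-dimensional subspace satisfying (a) $W_1 + W_2 + \cdots + W_r$ has dimension $< rn$ — e.g. arrange $W_1 \subseteq W_2 + \cdots + W_r$, which is possible since $W_2+\cdots+W_r$ has dimension at least $n$ (it contains $W_2$) — so that $(W_1,\ldots,W_r) \in \GrSigma$, and (b) $W_1$ is chosen among such subspaces, if possible, to also satisfy $\Vert \mathrm{pr}_{W_1} - \mathrm{pr}_{\Tang{\tensor A_1}{\Var S}}\Vert \le 1$, which by step one is automatic for \emph{any} pair of $n$-planes. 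Then $\dist_\mathrm{P}\bigl((\Tang{\tensor A_1}{\Var S},\ldots),(W_1,\ldots,W_r)\bigr) = \Vert \mathrm{pr}_{W_1} - \mathrm{pr}_{\Tang{\tensor A_1}{\Var S}}\Vert \le 1$ because all other terms vanish, and hence $\dist_\mathrm{P}\bigl((\Tang{\tensor A_1}{\Var S},\ldots), \GrSigma\bigr) \le 1$. Inverting via \refeqn{characterization123} gives $\kappa(\tensor A_1,\ldots,\tensor A_r) \ge 1$. One should also dispatch the degenerate edge case: if the tangent tuple already lies in $\GrSigma$ then $\kappa = +\infty \ge 1$ trivially, so we may assume it does not, in which case $W_2+\cdots+W_r$ genuinely has dimension $\ge n$ and the construction above goes through. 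When $r = 1$, $\GrSigma$ is empty (the condition $\dim W_1 < n$ is impossible for $W_1 \in \Gr(\Pi,n)$), and the claim needs the alternative observation below.

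\textbf{Alternative / cleanliness remark, and the main obstacle.} A cleaner route, which also covers $r = 1$ uniformly, is to bound $\kappa$ directly from its defining limit: one shows that along the Segre manifold the local inverse $\Phi^{-1}$ is $1$-Lipschitz-from-below in the appropriate sense — i.e. $\Vert \Phi(\tuple B_1,\ldots,\tuple B_r) - \Phi(\tuple B_1',\ldots,\tuple B_r')\Vert \le \Vert (\tuple B_1,\ldots,\tuple B_r) - (\tuple B_1',\ldots,\tuple B_r')\Vert$ in a neighbourhood, because $\Phi$ is a sum and each summand-difference contributes at most its norm by the triangle inequality after an orthogonal-invariance normalization; hence the difference quotient in the definition of $\kappa$ is eventually $\ge$ the reciprocal of an operator that contracts, giving $\kappa \ge 1$. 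I expect the main obstacle to be making the $r=1$ and ``already ill-posed'' corner cases rigorous and verifying that the chosen competitor $W_1$ in the Grassmannian argument genuinely realizes projection distance $\le 1$ without accidentally forcing it to be much larger — but since \emph{every} pair of equidimensional subspaces has projection distance at most $1$, this obstacle is mild, and the bulk of the work is just correctly invoking \refeqn{characterization123} and the principal-angle bound.
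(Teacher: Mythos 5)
Your main argument is correct, but it takes a genuinely different route than the paper. The paper's proof is a one-liner resting on \cite[Theorem 1.1]{BV2017}: the condition number equals the inverse of the smallest singular value of a matrix whose columns (orthonormal bases of the tangent spaces $\Tang{\tensor{A}_i}{\Var{S}}$, stacked side by side) all have unit length, and the min--max characterization gives $\sigma_{\min}\le 1$, hence $\kappa\ge 1$. You instead invoke the condition number theorem \refeqn{characterization123} and exhibit an explicit ill-posed competitor: keeping $W_i=\Tang{\tensor{A}_i}{\Var{S}}$ for $i\ge 2$ and replacing the first plane by any $n$-plane inside $W_2+\cdots+W_r$ (e.g.\ $W_1=W_2$) produces a tuple in $\GrSigma$, and since the spectral norm of a difference of orthogonal projections never exceeds $1$, the original tuple lies at projection distance at most $1$ from $\GrSigma$; inverting gives $\kappa\ge 1$. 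Both are valid: your route buys a geometric picture (the ill-posed locus is never far away in the projection metric), at the cost of importing the full equality of \cite[Theorem 1.3]{BV2017} and a case split ($r\ge 2$, tuple already in $\GrSigma$, and $r=1$ where $\GrSigma$ is empty), whereas the paper's singular-value argument covers every $r\ge 1$ uniformly and is essentially self-contained. One caveat about your ``cleaner alternative'': the claimed bound $\Vert\Phi(\tensor{B}_1,\ldots,\tensor{B}_r)-\Phi(\tensor{B}_1',\ldots,\tensor{B}_r')\Vert\le\Vert(\tensor{B}_1,\ldots,\tensor{B}_r)-(\tensor{B}_1',\ldots,\tensor{B}_r')\Vert$ is false for $r\ge 2$; the triangle inequality combined with Cauchy--Schwarz only yields the Lipschitz constant $\sqrt{r}$ (take all summand-differences equal to see it is sharp), so that route by itself gives only $\kappa\ge 1/\sqrt{r}$. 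This is harmless in your write-up because you need it only for $r=1$, where $\Phi$ is the inclusion of $\Var{S}$ and $\kappa=1$ trivially, but it should not be advertised as a uniform replacement for the Grassmannian argument.
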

\begin{proof}
The condition number equals the inverse of the smallest singular value of a matrix all of whose columns are of unit length by \cite[Theorem 1.1]{BV2017}. The result follows from the min-max characterization of the smallest singular value.
\end{proof}

The next lemma is a basic computation in Riemannian geometry.
\begin{lemma}\label{prop:inversedistance}
	Let $M$ be a Riemannian manifold, and $N$ a codimension $c$ submanifold of $M$. Let $\mathrm{dist}_M$ denote the Riemannian distance on $M$ and $\vert\omega\vert$ be the density on $M$. Then,
	\[
	\int_{x\in M}\left(\frac{1}{\mathrm{dist}_M(x,N)}\right)^c\,\vert\omega\vert=\infty.
	\]
\end{lemma}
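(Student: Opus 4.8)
The plan is to reduce the global integral to a local computation in a tubular neighborhood of $N$ and then evaluate the resulting integral in "fiber coordinates" where the distance to $N$ looks like the Euclidean norm on the normal directions. Since the integrand is nonnegative, it suffices to show the integral diverges over an arbitrarily small open piece of $N$'s tubular neighborhood; in particular I may work locally and assume $N$ is compact (or just restrict to a relatively compact open subset $N_0 \subset N$ with a uniform lower bound on the injectivity radius of the normal exponential map).

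First I would invoke the tubular neighborhood theorem: there is an $\varepsilon > 0$ and a diffeomorphism from the $\varepsilon$-disk bundle of the normal bundle $\nu N_0 \to N_0$ onto an open neighborhood $U$ of $N_0$ in $M$, realized by the normal exponential map $\exp^\perp$. Over this neighborhood, for a point $x = \exp^\perp_y(v)$ with $y \in N_0$ and $v \in \nu_y N_0$, $\Norm{v} < \varepsilon$, one has $\mathrm{dist}_M(x,N) \le \Norm{v}$, since the geodesic $t \mapsto \exp^\perp_y(tv)$ joins $x$ to $y \in N$ and has length $\Norm{v}$. Hence
\[
\int_{x \in M} \left(\frac{1}{\mathrm{dist}_M(x,N)}\right)^c \vert\omega\vert \;\ge\; \int_{x \in U} \frac{\vert\omega\vert}{\mathrm{dist}_M(x,N)^c} \;\ge\; \int_{x \in U} \frac{\vert\omega\vert}{\Norm{v(x)}^c},
\]
where $v(x)$ is the normal component under the tubular neighborhood identification. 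Next I would pull back the density $\vert\omega\vert$ along $\exp^\perp$. In geodesic normal fiber coordinates the Jacobian of $\exp^\perp$ at the zero section equals $1$ and depends smoothly on $(y,v)$, so on the (relatively compact) region $\overline{N_0} \times \{\Norm{v} \le \varepsilon/2\}$ it is bounded below by a constant $c_0 > 0$. Writing the pulled-back density as (bounded-below smooth factor) times (density of $N_0$) times (Lebesgue density on the $c$-dimensional fibers $\nu_y N_0 \cong \R^c$), we get
\[
\int_{x \in U} \frac{\vert\omega\vert}{\Norm{v(x)}^c} \;\ge\; c_0 \left(\int_{N_0} \vert\omega_{N_0}\vert\right) \int_{\Norm{v} \le \varepsilon/2,\ v \in \R^c} \frac{\d v}{\Norm{v}^c}.
\]
The first integral is a positive constant, and the fiber integral $\int_{\Norm{v}\le \rho} \Norm{v}^{-c}\,\d v$ over $\R^c$ diverges: in polar coordinates it equals (up to the volume of the unit $(c-1)$-sphere) $\int_0^\rho t^{c-1} t^{-c}\,\d t = \int_0^\rho t^{-1}\,\d t = \infty$. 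This yields the claim.

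The main obstacle — really the only nontrivial point — is making the comparison between $\vert\omega\vert$ and the product density $\vert\omega_{N_0}\vert \times \d v$ rigorous, i.e., controlling the Jacobian of the normal exponential map. This is standard (it is essentially Fermi/geodesic normal coordinates adapted to a submanifold, and the Jacobian tends to $1$ uniformly as $\Norm{v}\to 0$ on compact pieces of $N$), so shrinking $\varepsilon$ and $N_0$ if necessary gives a uniform lower bound $c_0$; alternatively one can cite the coarea formula for $\exp^\perp$. A minor technical remark: if $N$ is noncompact one should first fix any point of $N$ and a small coordinate chart around it, so that the relevant closures are compact and all the above constants exist; since the integrand is nonnegative, throwing away the rest of $M$ only decreases the integral, which is all we need.
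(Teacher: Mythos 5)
Your proposal is correct and follows essentially the same route as the paper: localize near a point of $N$, bound $\mathrm{dist}_M(x,N)$ from above by the norm of the transverse (fiber) coordinate, control the Jacobian of the local diffeomorphism on a compact piece, and reduce to the divergence of $\int_0^\epsilon t^{c-1}t^{-c}\,\d t$ in polar coordinates on the $c$-dimensional fibers. The only difference is cosmetic: the paper uses a plain flattening chart from the definition of a submanifold with crude Lipschitz bounds $\|\deriv{\phi}{x}\|,\|\deriv{\phi}{x}^{-1}\|\le C$, whereas you invoke the tubular neighborhood theorem and the normal exponential map, which trades a slightly heavier standard theorem for cleaner constants.
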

\begin{proof}
	Let $m,k$ be the dimensions of $M,N$ and let $y\in N$ be any point. Let $\epsilon >0$. From the definition of being a submanifold, there exists an open neighborhood $U$ of $y$ in $M$ and a diffeomorphism $\phi:U\to B_\epsilon(\R^k)\times B_\epsilon(\R^{m-k})$, such that $N\cap U=\phi^{-1}(B_\epsilon(\R^k)\times \{0\})$, where $B_\epsilon(\R^m)$ is the open ball of radius $\epsilon$ in $\R^m$. By compactness, choosing $\epsilon$ small enough, we can assume that there is a positive constant $C$ such that the derivative of $\phi$ satisfies
	$\|\deriv{\phi}{x}\|\leq C$, $\|\deriv{\phi}{x}^{-1}\|\leq C$, and  $|\det(\deriv{\phi}{x})|\ge C$ for all $x\in U$. In particular, the length $L$ of a curve in $U$ and the length $L'$ of its image under $\phi$ satisfy $L\leq CL'$. Writing $(\vect{x}_1,\vect{x}_2):=\phi(x)$ for the image of $x$ under $\phi$ we thus have $\mathrm{dist}_M(x,N)\leq  C \|\vect{x}_2\|.$
	The change of variables theorem, i.e., \cite[Theorem 3-13]{Spivak1965}, gives
	\begin{align*}
	\int_{x\in U}\left(\frac{1}{\mathrm{dist}_M(x,N)}\right)^c\,\vert\omega\vert&= \int_{x\in U}\left(\frac{1}{\mathrm{dist}_M(x,N)}\right)^{m-k}\,\vert\omega\vert\\
  &\geq \frac{1}{C^{m-k+1}}\int_{(\vect x_1,\vect x_2)\in B_\epsilon(\R^k)\times B_\epsilon(\R^{m-k})}\frac{1}{\|\vect x_2\|^{m-n}}\,\mathrm{d}\vect x_1\mathrm{d}\vect x_2.
\end{align*}
	Up to positive constants, using Fubini's theorem, i.e., \cite[Theorem 3-10]{Spivak1965}, and passing to polar coordinates, this last integral equals
	\[
	\int_{\vect x_1\in B_\epsilon(\R^k)}\int_{0}^\epsilon\frac{t^{m-k-1}}{t^{m-k}}\,\mathrm{d}t\,dv=\infty.
	\]
	The lower bound for the integral in the lemma then follows from
	\[
\int_{x\in M}\left(\frac{1}{\mathrm{dist}_M(x,N)}\right)^c\,\vert\omega\vert\geq	\int_{x\in U}\left(\frac{1}{\mathrm{dist}_M(x,N)}\right)^c\,\vert\omega\vert
	\]
This finishes the proof.
\end{proof}

Inspecting \cref{cnt}, we see that combining it with the above lemma contains the key idea for proving that the expected value of the condition number can be infinite. However, to use these results in our proof of \cref{thm_expected_value}, we need to ensure that \cref{prop:inversedistance} applies. \Cref{cnt} uses the weighted distance from \cref{weighted_distance} and it is not immediately evident whether it is induced by a Riemannian metric on $\mathbb{P}(\R^{n_1})\times \cdots \times \mathbb{P}(\R^{n_d})$.
Fortunately, the next lemma shows that it is.

\begin{lemma}
Let $\langle \cdot , \cdot \rangle$ be the Fubini--Study metric. We define the weighted inner product~$\langle \cdot, \cdot \rangle_{\mathrm{w}}$ on the tangent space at $p\in \mathbb{P}(\R^{n_1})\times \cdots \times \mathbb{P}(\R^{n_d})$ as follows. For $\vect{u}, \vect{v}\in \Tang{p}{(\mathbb{P}(\R^{n_1})\times \cdots \times \mathbb{P}(\R^{n_d}))}$, $\vect{u}=(\vect u^1,\ldots,\vect u^d)$, $\vect{v}=(\vect v^1,\ldots,\vect v^d)$, we define
  $
  \langle \vect u,\vect v\rangle_{\mathrm{w}}
  := \sum_{i=1}^d (n - n_i) \langle \vect u^i,\vect v^i\rangle
  $.
Then, the distance on $\mathbb{P}(\R^{n_1})\times \cdots \times \mathbb{P}(\R^{n_d})$ corresponding to~$\langle \cdot, \cdot \rangle_{\mathrm{w}}$ is $d_\mathrm{w}$.
\end{lemma}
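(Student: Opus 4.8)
The plan is to recognize $\langle\cdot,\cdot\rangle_{\mathrm{w}}$ as a \emph{product} Riemannian metric obtained by rescaling each Fubini--Study factor by a positive constant, and then to combine two elementary facts from Riemannian geometry: multiplying a metric $g$ by a constant $\lambda>0$ multiplies its Riemannian distance by $\sqrt{\lambda}$, and the Riemannian distance on a product manifold is the $\ell^2$-aggregate of the factor distances. Concretely, under the canonical identification $\Tang{p}{(\mathbb{P}(\R^{n_1})\times \cdots \times \mathbb{P}(\R^{n_d}))} \cong \Tang{p_1}{\mathbb{P}(\R^{n_1})}\times \cdots \times \Tang{p_d}{\mathbb{P}(\R^{n_d})}$, the definition of $\langle\cdot,\cdot\rangle_{\mathrm{w}}$ exhibits it as the orthogonal direct sum $\bigoplus_{i=1}^d (n-n_i)\langle\cdot,\cdot\rangle$ of rescaled copies of the Fubini--Study metric; each coefficient is strictly positive since $n - n_i = 1 - d + \sum_{j\neq i} n_j > 0$, so this is a genuine Riemannian metric of product type.

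First I would record the rescaling step: for any piecewise-$C^1$ curve $\gamma$ in $\mathbb{P}(\R^{n_i})$, its length with respect to $(n-n_i)\langle\cdot,\cdot\rangle$ is $\int_0^1 \bigl((n-n_i)\langle\gamma'(t),\gamma'(t)\rangle\bigr)^{1/2}\,\d t = \sqrt{n-n_i}\,l(\gamma)$; since the Riemannian distance is the infimum of such lengths over the same family of curves, the $i$-th rescaled factor has distance $\sqrt{n-n_i}\,d_{\mathbb{P}}$. Next I would prove the product-distance formula. Fix $p=(p_1,\ldots,p_d)$ and $q=(q_1,\ldots,q_d)$ and let $L_i$ denote the distance from $p_i$ to $q_i$ in the $i$-th rescaled factor. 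For ``$\le$'', pick in each factor a curve from $p_i$ to $q_i$ parametrized proportionally to arclength on $[0,1]$ and of length at most $L_i+\varepsilon$; the product curve then has constant squared speed $\sum_i(L_i+\varepsilon)^2$ and hence length $\bigl(\sum_i(L_i+\varepsilon)^2\bigr)^{1/2}$, and letting $\varepsilon\to 0$ gives $\dist(p,q)\le\bigl(\sum_i L_i^2\bigr)^{1/2}$. For ``$\ge$'', any curve $\gamma=(\gamma_1,\ldots,\gamma_d)$ from $p$ to $q$ has length $\int_0^1\bigl(\sum_i\|\gamma_i'(t)\|_i^2\bigr)^{1/2}\,\d t \ge \bigl(\sum_i\bigl(\int_0^1\|\gamma_i'(t)\|_i\,\d t\bigr)^2\bigr)^{1/2}\ge\bigl(\sum_i L_i^2\bigr)^{1/2}$, where $\|\cdot\|_i$ is the norm of the $i$-th factor, the first inequality is Minkowski's integral inequality, and the second uses $\int_0^1\|\gamma_i'(t)\|_i\,\d t = l(\gamma_i)\ge L_i$.

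Putting the two steps together yields $\dist(p,q)=\bigl(\sum_{i=1}^d(n-n_i)\,d_{\mathbb{P}}(p_i,q_i)^2\bigr)^{1/2}=d_{\mathrm{w}}(p,q)$, which is the claim. All ingredients are standard (see, e.g., \cite{riemannian_geometry}); the only step needing a modicum of care is the lower bound in the product-distance formula, and Minkowski's integral inequality handles it, so I do not anticipate a genuine obstacle.
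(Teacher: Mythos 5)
Your proof is correct and follows essentially the same route as the paper: there, too, the weighted metric $\langle\cdot,\cdot\rangle_{\mathrm{w}}$ is recognized as a product of constantly rescaled Fubini--Study metrics (phrased as a product of spheres of radii $\sqrt{n-n_1},\ldots,\sqrt{n-n_d}$), and the distance is identified as the $\ell^2$-aggregate of the rescaled factor distances. The only difference is that you make explicit the product-distance formula (upper bound via constant-speed curves, lower bound via Minkowski's integral inequality), which the paper leaves implicit in its product-of-spheres picture.
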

\begin{proof}
Let $\gamma(t) = (\gamma_1(t),\ldots,\gamma_d(t))$ be a piecewise continuous curve in $\mathbb{P}(\R^{n_1})\times \cdots \times \mathbb{P}(\R^{n_d})$ connecting $p,q\in \mathbb{P}(\R^{n_1})\times \cdots \times \mathbb{P}(\R^{n_d})$, such that the distance between $p,q$ given by $\langle \cdot ,\cdot \rangle_\mathrm{w}$ is
$$
\int_0^1 \langle \gamma'(t), \gamma'(t) \rangle_\mathrm{w}^\frac{1}{2}\, \d t
= \int_0^1 \left(\sum_{i=1}^d(n-n_i)\langle \gamma_i'(t), \gamma_i'(t) \rangle\right)^\frac{1}{2} \,\d t.
$$
Because $(n-n_i)\langle \gamma_i'(t) ,\gamma_i'(t) \rangle = \langle \sqrt{n-n_i}\,\gamma_i'(t), \sqrt{n-n_i}\,\gamma_i'(t) \rangle$ and because we have the identity of tangent spaces $\Tang{\gamma_i(t)}{\mathbb{P}(\R^{n_i})} = \Tang{\gamma_i(t)}{\mathbb{S}(\R^{n_i})}$ for all $i$ and $t$, we may view $\gamma$ as the shortest path between two points on a product of $d$ spheres with radii $\sqrt{n-n_1},\ldots, \sqrt{n-n_d}$. The length of this path is $d_\mathrm{w}(p,q)$.
\end{proof}

Let $\sigma$ be the projective Segre map from \cref{eqn_def_sigma}. By \cite[Section 4.3.4.]{Landsberg2012}, $\sigma$ is a diffeomorphism and we define a Riemannian metric $g$ on $\mathbb{P}\Var{S}$ to be the pull-back metric of $\langle \cdot, \cdot \rangle_{\mathrm{w}}$ under~$\sigma^{-1}$; see \cite[Proposition 13.9]{Lee2013}. Then, by construction, we have the following result.

\begin{cor}
  The weighted distance $\dist_\mathrm{w}$ on $\Pj\Var S^{\times r}$ is given by the Riemannian metric $g$.
\end{cor}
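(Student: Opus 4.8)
The plan is to observe that the statement is essentially immediate from the preceding lemma, the defining property of a pull-back metric, and the standard formula for the Riemannian distance on a Riemannian product. First I would recall that by the preceding lemma the Riemannian distance on $\mathbb{P}(\R^{n_1})\times\cdots\times\mathbb{P}(\R^{n_d})$ associated with $\langle\cdot,\cdot\rangle_{\mathrm{w}}$ is exactly $d_{\mathrm{w}}$. Since $\sigma$ is a diffeomorphism and $g$ is defined as the pull-back of $\langle\cdot,\cdot\rangle_{\mathrm{w}}$ under $\sigma^{-1}$, the map $\sigma^{-1}\colon(\mathbb{P}\Var{S},g)\to(\mathbb{P}(\R^{n_1})\times\cdots\times\mathbb{P}(\R^{n_d}),\langle\cdot,\cdot\rangle_{\mathrm{w}})$ is, by construction, a Riemannian isometry; in particular it preserves lengths of piecewise differentiable curves and hence the induced Riemannian distances. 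Consequently the $g$-distance between two points $[\tensor{A}],[\tensor{B}]\in\mathbb{P}\Var{S}$ equals $d_{\mathrm{w}}(\sigma^{-1}(\tensor{A}),\sigma^{-1}(\tensor{B}))$.

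Next I would pass to the $r$-fold product. Equip $\Pj\Var{S}^{\times r}$ with the Riemannian product metric that places the metric $g$ on each of the $r$ factors; this is the metric meant in the statement. For a Riemannian product the distance is the Pythagorean combination of the factor distances: for a curve $\gamma=(\gamma_1,\ldots,\gamma_r)$ one has $l(\gamma)=\int_0^1\bigl(\sum_{i=1}^r\langle\gamma_i'(t),\gamma_i'(t)\rangle_g\bigr)^{1/2}\,\d t$, so the lower bound $l(\gamma)\ge\sqrt{\sum_i l(\gamma_i)^2}\ge\sqrt{\sum_i\dist(p_i,q_i)^2}$ follows from Minkowski's inequality, while the matching upper bound is obtained by concatenating constant-speed curves in the factors whose lengths approach the respective factor distances. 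Combining this with the first step yields that the $g$-distance on $\Pj\Var{S}^{\times r}$ between $([\tensor{A}_1],\ldots,[\tensor{A}_r])$ and $([\tensor{B}_1],\ldots,[\tensor{B}_r])$ is $\sqrt{\sum_{i=1}^r d_{\mathrm{w}}(\sigma^{-1}(\tensor{A}_i),\sigma^{-1}(\tensor{B}_i))^2}$, which is precisely $\dist_{\mathrm{w}}$ of \cref{weighted_distance}. I would also remark explicitly that the right-hand side depends only on the projective classes of the $\tensor{A}_i$ and $\tensor{B}_i$, so that $\dist_{\mathrm{w}}$ indeed descends from $\Var{S}^{\times r}$ to a well-defined function on $\Pj\Var{S}^{\times r}$.

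There is no real obstacle here; the only step requiring a little care is the product-distance formula, and that is entirely standard. The main thing to get right is bookkeeping: making the isometry $\sigma$ explicit, and recording that passing from $\Var{S}^{\times r}$ to $\Pj\Var{S}^{\times r}$ is harmless because the weighted distance is invariant under rescaling of each rank-one tensor.
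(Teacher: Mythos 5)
Your argument is correct and is exactly the route the paper intends: the paper offers no separate proof, asserting the corollary holds ``by construction'' from the preceding lemma and the definition of $g$ as the pull-back of $\langle\cdot,\cdot\rangle_{\mathrm{w}}$ under $\sigma^{-1}$, and your write-up simply makes that construction explicit (isometry via the pull-back, plus the standard Pythagorean distance formula for the Riemannian product). No gaps; this matches the paper's reasoning.
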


The last technical lemma we need is the following.

\begin{lemma}\label{sigma_det}
Consider the projective Segre map $\sigma : \mathbb{P}(\R^{n_1})\times \cdots \times \mathbb{P}(\R^{n_d}) \to \mathbb{P}\Var{S}$ from \cref{eqn_def_sigma}. For any point $p=([\sten{a}{1}{}],\ldots, [\sten{a}{d}{}]) \in \mathbb{P}(\R^{n_1})\times \cdots \times \mathbb{P}(\R^{n_d})$ we have $\vert\det(\deriv{\sigma}{p})\vert=1$.
\end{lemma}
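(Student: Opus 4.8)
The plan is to establish the stronger statement that $\deriv{\sigma}{p}$ is a \emph{linear isometry} once $\mathbb{P}(\R^{n_1})\times\cdots\times\mathbb{P}(\R^{n_d})$ is given the product Fubini--Study metric and $\mathbb{P}\Var{S}$ is given the Fubini--Study metric it inherits from the ambient space $\mathbb{P}(\R^{n_1\times\cdots\times n_d})$. Since $\sigma$ is a diffeomorphism, $\deriv{\sigma}{p}$ is then an invertible linear map between inner-product spaces of the same dimension that preserves the inner product, so all of its singular values equal $1$ and hence $\vert\det(\deriv{\sigma}{p})\vert=1$.

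To carry this out I would first pick unit-norm representatives $\vect a_1,\ldots,\vect a_d$ of the factors of $p=([\vect a_1],\ldots,[\vect a_d])$, so that $\vect b:=\vect a_1\otimes\cdots\otimes\vect a_d$ is a unit-norm representative of $\sigma(p)$. Under the identification in \cref{tangent_proj}, the tangent space at $p$ is $\vect a_1^\perp\oplus\cdots\oplus\vect a_d^\perp$ with the (unweighted) direct-sum inner product, and $\Tang{\sigma(p)}{\mathbb{P}\Var{S}}$ is a subspace of $\vect b^\perp$ with the restricted inner product. Next I would factor $\sigma$ through the unit spheres: writing $\pi$, $\pi'$ for the canonical projections $\mathbb{S}(\R^{n_1\times\cdots\times n_d})\to\mathbb{P}(\R^{n_1\times\cdots\times n_d})$ and $\mathbb{S}(\R^{n_1})\times\cdots\times\mathbb{S}(\R^{n_d})\to\mathbb{P}(\R^{n_1})\times\cdots\times\mathbb{P}(\R^{n_d})$, and $\mu$ for the multilinear map $(\vect x_1,\ldots,\vect x_d)\mapsto\vect x_1\otimes\cdots\otimes\vect x_d$ (which sends the product of unit spheres into $\mathbb{S}(\R^{n_1\times\cdots\times n_d})$), one has $\sigma\circ\pi'=\pi\circ\mu$. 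As $\pi$ and $\pi'$ are local isometries (their differentials are the identity on the relevant subspaces $\vect x^\perp$ under \cref{tangent_proj}, and with unit representatives the Fubini--Study metric on $\vect x^\perp$ is the Euclidean one), it suffices to prove that the differential of $\mu$ at $(\vect a_1,\ldots,\vect a_d)$, restricted to $\vect a_1^\perp\oplus\cdots\oplus\vect a_d^\perp$, is a linear isometry into $\vect b^\perp$. This differential sends $(\vect u_1,\ldots,\vect u_d)$ to $\sum_{i=1}^d\vect a_1\otimes\cdots\otimes\vect u_i\otimes\cdots\otimes\vect a_d$, where $\vect u_i$ occupies the $i$-th slot.

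The remaining step is the elementary computation with inner products of elementary tensors. For $i\ne j$ the inner product of the $i$-th and $j$-th summands factors as a product of slotwise inner products containing $\langle\vect u_i,\vect a_i\rangle=0$; likewise the inner product of the $i$-th summand with $\vect b$ contains $\langle\vect u_i,\vect a_i\rangle=0$, so the image of $\mathrm{d}\mu$ indeed lies in $\vect b^\perp$; and for each fixed $i$ one has $\langle\vect a_1\otimes\cdots\otimes\vect u_i\otimes\cdots\otimes\vect a_d,\ \vect a_1\otimes\cdots\otimes\vect v_i\otimes\cdots\otimes\vect a_d\rangle=\bigl(\prod_{k\ne i}\Norm{\vect a_k}^2\bigr)\langle\vect u_i,\vect v_i\rangle=\langle\vect u_i,\vect v_i\rangle$. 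Summing over $i$, $\mathrm{d}\mu$ pulls the restricted inner product on $\vect b^\perp$ back to the direct-sum inner product on $\vect a_1^\perp\oplus\cdots\oplus\vect a_d^\perp$; it is therefore a linear isometry onto its image, and since $\sigma$ is a diffeomorphism that image is all of $\Tang{\sigma(p)}{\mathbb{P}\Var{S}}$. This finishes the proof. I do not anticipate a real obstacle here: the only thing requiring care is keeping track of which metric sits on which space --- the \emph{unweighted} product Fubini--Study metric on the domain and the Fubini--Study metric $\mathbb{P}\Var{S}$ inherits as a submanifold of $\mathbb{P}(\R^{n_1\times\cdots\times n_d})$, rather than the weighted metric $g$ of \cref{weighted_distance}, for which the statement would merely restate a definition.
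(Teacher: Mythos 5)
Your proposal is correct and is essentially the paper's own argument: both establish the stronger fact (noted in the paper's remark following the lemma) that $\deriv{\sigma}{p}$ is a linear isometry from the product Fubini--Study metric to the metric $\mathbb{P}\Var{S}$ inherits from the ambient projective space, using the slotwise factorization of inner products of elementary tensors, and then conclude $\vert\det(\deriv{\sigma}{p})\vert=1$. The only cosmetic difference is that the paper first reduces to the special point $([\vect{e}_1^1],\ldots,[\vect{e}_1^d])$ by orthogonal equivariance and checks that an orthonormal basis maps to an orthonormal basis, whereas you carry out the same inner-product computation at a general point via unit-sphere lifts of the factors.
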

\begin{proof}
We denote by $\vect{e}_i^j$ the $i$th standard basis vector of $\R^{n_j}$; i.e., $\vect{e}_i^j$ has zeros everywhere except for the $i$th entry, where it has a 1. To ease notation, let us assume $\vect{e}_i^j$ to be a row vector.
Because each $\Pj(\R^{n_j})$ is an orbit of $[\vect{e}_1^j]$ under the orthogonal group, it suffices to show the claim for $p=([\vect{e}_1^1],\ldots, [\vect{e}_1^d])$. By \cref{tangent_proj}, an orthonormal basis for the tangent space $\Tang{[\vect{e}_1^j]}{\Pj(\R^{n_j})}$ is $\{\vect{e}_2^j,\ldots,\vect{e}_{n_j}^j\}$. Hence, an orthonormal basis for $\Tang{p}{(\Pj(\R^{n_1})\times\cdots\times\Pj(\R^{n_d}))}$ is
$$
\bigcup_{j=1}^d\{(\underbrace{0,\ldots,0}_{j-1\text{ times}},\vect{e}_i^j,\underbrace{0,\ldots,0}_{d-j+1\text{ times}}) \mid 2\leq i\leq n_j\}.
$$
Fix $1\leq j\leq d$ and $2\leq i\leq n_j$. Then, by the product rule, we have
  $$
  \deriv{\sigma}{p}(0,\ldots,0,\vect e_j,0,\ldots,0) = \vect{e}_1^1 \otimes \cdots \otimes \vect{e}_1^{j-1} \otimes \vect{e}_i^j \otimes \vect{e}_1^{j+1} \otimes \cdots \otimes \vect{e}_1^d.
  $$
It is easily verified that $\{ \vect{e}_1^1 \otimes \cdots \otimes \vect{e}_1^{j-1} \otimes \vect{e}_i^j \otimes \vect{e}_1^{j+1} \otimes \cdots \otimes \vect{e}_1^d \mid 1\leq j\leq d, 2\leq i\leq n_j\}$ is an orthonormal basis of $\Tang{\sigma(p)}{\mathbb{P}\Var S}$ (for instance, by using \cref{inner_droducts} below). This shows that $\deriv{\sigma}{p}$ maps an orthonormal basis to an orthonormal basis. Hence, $\vert\det(\deriv{\sigma}{p})\vert=1$.
\end{proof}

\begin{rem}
In fact, the proof of the foregoing lemma shows more than $\vert\det(\deriv{\sigma}{p})\vert=1$. Namely, it shows that $\sigma$ is an \textit{isomety} in the sense of \cref{appendix:isometry_dfn}.
\end{rem}

Now we have gathered all the ingredients to prove \refthm{thm_expected_value}.

\begin{proof}[Proof of \refthm{thm_expected_value}]
First, we use that the condition number is \emph{scale invariant}. That is, for all $t_1,\ldots,t_r\in \R\backslash\{0\}$ we have by \cite[Proposition 4.4]{BV2017}:
$$
\kappa(t_1\tensor{A}_1,\ldots, t_r\tensor{A}) = \kappa(\tensor{A}_1,\ldots, \tensor{A}).
$$
This implies that the random variable under consideration is independent of the scaling of the factors $\sten{a}{i}{j}$ and, consequently, we have (see, e.g., \cite[Remark 2.24]{condition})
$$
\mean\limits_{\sten{a}{i}{j} \in \R^{m_j}, 1\leq j\leq d, 1\leq i\leq r\atop \text{ standard normal i.i.d.}}\, \bigl[ \kappa(\tensor{A}_1,\ldots, \tensor{A}_r)^c \bigr] = \mean\limits_{\sten{a}{i}{j} \in \Pj(\R^{m_j}), 1\leq j\leq d, 1\leq i\leq r \atop \text{ uniformly i.i.d.}}\, \bigl[ \kappa(\tensor{A}_1,\ldots, \tensor{A}_r)^c \bigr].
$$
Let $\vert\omega\vert$ denote the density on $\Var S^{\times r}=\Var S_{n_1,\ldots,n_d}^{\times r}$. By \cref{sigma_det}, the Jacobian of the change of variables via the projective Segre map $\sigma$ is constant and equal to 1. Hence,
$$
\mean\limits_{\sten{a}{i}{j} \in \Pj(\R^{m_j}), 1\leq j\leq d, 1\leq i\leq r \atop \text{ uniformly i.i.d.}}\, \bigl[ \kappa(\tensor{A}_1,\ldots, \tensor{A}_r)^c \bigr]
= \frac{1}{C} \int_{(\tensor{A}_1,\ldots, \tensor{A}_r)\in\Pj \Var S^{\times r}} \kappa(\tensor{A}_1,\ldots, \tensor{A}_r)^c\, \vert\omega\vert,
$$
where $C=\int_{\Pj \Var S^{\times r}}  \vert\omega\vert<\infty$, because $\Pj \Var S^{\times r}$ is compact. For brevity, we write $\tuple{p} = (\tensor{A}_1, \ldots, \tensor{A}_r)$.
Then, by \cref{cnt} we have
$$
\int_{\tuple{p}\in\Pj \Var S^{\times r}} \kappa(\tuple{p})^c \, \vert\omega\vert
\geq \int_{\tuple{p}\in\Pj \Var S^{\times r}} \left( \frac{1}{\mathrm{dist}_\mathrm{w}(\tuple{p}, \Sigma_\Pj)} \right)^c \, \vert\omega\vert.
$$
We cannot directly apply \cref{lem_codim_one} here, because the weighted distance $\mathrm{dist}_\mathrm{w}$ \emph{is not given by the product Fubini--Study metric}. However, from the definitions of the weighted distance and the Fubini--Study distance \cref{product_fubini_study}, we find $\mathrm{dist}_\mathrm{w}(\tuple{p}, \Sigma_\Pj)\leq \sqrt{n}\,\mathrm{dist}_\Pj(\tuple{p}, \Sigma_\Pj))$. Therefore, we have
$$
\int_{\tuple{p}\in\Pj \Var S^{\times r}} \left( \frac{1}{\mathrm{dist}_\mathrm{w}(\tuple{p}, \Sigma_\Pj)} \right)^c\, \vert\omega\vert
\ge
n^{-\frac{c}{2}} \int_{\tuple{p}\in\Pj \Var S^{\times r}} \left( \frac{1}{\mathrm{dist}_\Pj(\tuple{p}, \Sigma_\Pj)} \right)^c \, \vert\omega\vert.
$$
By assumption, there is a manifold $U\subset \Sigma_\Pj$ of codimension $c$ in $\Var S^{\times r}$
Applying \cref{prop:inversedistance} to this manifold we have
\[
     	\int_{\tensor{A}_1,\ldots,\tensor{A}_r\in \Pj\Var{S}} \left( \frac{1}{\mathrm{dist}_\Pj((\tensor{A}_1,\ldots,\tensor{A}_r),\Sigma_\Pj)} \right)^c\,\vert\omega\vert
     	\geq \int_{\tensor{A}_1,\ldots,\tensor{A}_r\in \Pj\Var{S}} \left( \frac{1}{\mathrm{dist}_\Pj((\tensor{A}_1,\ldots,\tensor{A}_r),U)} \right)^c\,\vert\omega\vert=\infty.
\]
Putting all the equalities and inequalities together, we therefore get
$$
\mean\limits_{\sten{a}{i}{j} \in \R^{m_j}, 1\leq j\leq d, 1\leq i\leq r\atop \text{ standard normal i.i.d.}}\, \bigl[ \kappa(\tensor{A}_1,\ldots, \tensor{A}_r)^c \bigr]  = \infty.$$
By \cref{lemma_bound}, the condition number satisfies $\kappa( \tensor{A}_1,\ldots, \tensor{A}_r ) \ge 1$ for every $(\tensor{A}_1,\ldots,\tensor{A}_r)\in\Var{S}^{\times r}$. This together with the foregoing equation implies for $c \le e$:
$$\mean\limits_{\sten{a}{i}{j} \in \R^{m_j}, 1\leq j\leq d, 1\leq i\leq r\atop \text{ standard normal i.i.d.}}\, \bigl[ \kappa(\tensor{A}_1,\ldots, \tensor{A}_r)^e \bigr] = \infty.
$$
The proof is finished.
\end{proof}


Next, we investigate a particular corollary of the foregoing result. We will show that for third-order tensors $\R^{n_1 \times n_2 \times n_3}$, $n_1 \ge n_2 \ge n_3 \ge 2$, the expected value of  $(n_3-1)$th power of the condition number of random rank-$r$ tensors is indeed $\infty$. The following is the key ingredient.

\begin{lemma} \label{lem_codim_one}
Let $\Var{S}$ be the Segre manifold in $\R^{n_1\times n_2 \times n_3}$, $n_1\ge n_2\ge n_3 \ge 2$, and let $\Sigma_\Pj \subset (\Pj\Var{S})^{\times r}$ be the ill-posed locus.
Then, there is a subvariety~$\Var V\subset \Sigma_\Pj$ of codimension $n_3-1$ in $(\Pj\Var S)^{\times r}$.
\end{lemma}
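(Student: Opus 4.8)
The plan is to realize $\Var V$ explicitly as the locus in which the first two rank-one tensors of the tuple share their third-mode factor (so this of course uses $r\ge 2$, which holds throughout). Using that the projective Segre map $\sigma$ of \cref{eqn_def_sigma} is an isomorphism of varieties, so that $\sigma^{\times r}$ identifies $(\Pj\Var S)^{\times r}$ with $\bigl(\Pj(\R^{n_1})\times\Pj(\R^{n_2})\times\Pj(\R^{n_3})\bigr)^{\times r}$, I would set
$$
\Var V:=\cset{\bigl([\vect a_1\otimes\vect b_1\otimes\vect c_1],\ldots,[\vect a_r\otimes\vect b_r\otimes\vect c_r]\bigr)\in(\Pj\Var S)^{\times r}}{[\vect c_1]=[\vect c_2]}.
$$
Under $\sigma^{\times r}$ this is the diagonal $\Delta\subset\Pj(\R^{n_3})\times\Pj(\R^{n_3})$ — cut out by the vanishing of the $2\times 2$ minors of the $n_3\times 2$ matrix $[\,\vect c_1\ \vect c_2\,]$ — times all the remaining projective factors; hence $\Var V$ is a smooth irreducible subvariety of $(\Pj\Var S)^{\times r}$ of codimension $\dim\Pj(\R^{n_3})=n_3-1$ (in particular a submanifold), which is the statement to be proved. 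I would stress that taking the \emph{smallest} mode $n_3$ is precisely what keeps this codimension down to $n_3-1$: sharing a factor in mode $j$ would instead cost $n_j-1\ge n_3-1$.

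The real content of the lemma is the inclusion $\Var V\subseteq\Sigma_\Pj$, which I would establish pointwise. Fix a point of $\Var V$; after rescaling the representative of the second tensor we may write $\tensor A_1=\vect a_1\otimes\vect b_1\otimes\vect c$ and $\tensor A_2=\vect a_2\otimes\vect b_2\otimes\vect c$ with \emph{literally} the same nonzero third factor $\vect c$. I then consider the nonzero tensor $\vect w:=\vect a_2\otimes\vect b_1\otimes\vect c$. Differentiating the curve $t\mapsto(\vect a_1+t\vect a_2)\otimes\vect b_1\otimes\vect c$ at $t=0$ shows $\vect w\in\Tang{\tensor A_1}{\Var S}$, while differentiating $t\mapsto\vect a_2\otimes(\vect b_2+t\vect b_1)\otimes\vect c$ at $t=0$ shows $\vect w\in\Tang{\tensor A_2}{\Var S}$. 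Hence $(\vect w,-\vect w,0,\ldots,0)$ is a nonzero element of the kernel of the differential of $\Phi$ at $(\tensor A_1,\ldots,\tensor A_r)$; equivalently $\dim\bigl(\Tang{\tensor A_1}{\Var S}+\cdots+\Tang{\tensor A_r}{\Var S}\bigr)<rn$. By \cref{characterization123} (or directly: at a point where the differential of $\Phi$ is not injective, no local inverse of $\Phi$ exists, so $\kappa:=+\infty$) this gives $\kappa(\tensor A_1,\ldots,\tensor A_r)=\infty$, i.e.\ the point lies in $\Sigma_\Pj$. Since the point of $\Var V$ was arbitrary, $\Var V\subseteq\Sigma_\Pj$ and the proof is complete.

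I do not expect a serious obstacle once the candidate $\Var V$ has been found: the conceptual content is just the observation that \emph{sharing a factor in the smallest mode already forces first-order non-identifiability}, at a cost of only $n_3-1$ in codimension. The only steps needing a little care are (i) phrasing $\Var V$ through the Segre parametrization and recording that $\sigma$ is an isomorphism of varieties, so that ``the third factor'' is meaningful inside $(\Pj\Var S)^{\times r}$ and $\Var V$ genuinely is a smooth subvariety; (ii) the harmless rescaling that makes the two shared third factors equal as vectors rather than merely proportional; and (iii) the dimension count, which reduces to observing that exactly one copy of $\Pj(\R^{n_3})$ disappears from $\bigl(\Pj(\R^{n_1})\times\Pj(\R^{n_2})\times\Pj(\R^{n_3})\bigr)^{\times r}$. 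Should a purely coordinate-based verification that $\vect w$ lies in both tangent spaces be preferred, one can first move $\tensor A_1$ to $\vect e_1\otimes\vect e_1\otimes\vect e_1$ by an orthogonal change of basis in each factor and then read off $\Tang{\tensor A_1}{\Var S}$ and $\Tang{\tensor A_2}{\Var S}$ as spans of explicit tensors, exactly as in the proof of \cref{sigma_det}.
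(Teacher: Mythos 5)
Your proposal is correct and follows essentially the same route as the paper's proof of \cref{lem_codim_one}: the subvariety is the locus where two of the rank-one tensors share their third-mode (smallest) factor, the codimension count amounts to one copy of $\Pj(\R^{n_3})$ collapsing (you realize it as a diagonal via $\sigma^{\times r}$, the paper as the image of an explicit parametrizing map), and the inclusion in $\Sigma_\Pj$ is shown by exhibiting the same common tangent vector $\vect a_2\otimes\vect b_1\otimes\vect c$ forcing a nontrivial kernel of $\deriv{\Phi}{}$, hence $\kappa=\infty$ via \cref{characterization123}. The only differences (indices $1,2$ versus $1,r$, curves versus explicitly written tangent spaces) are cosmetic.
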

\begin{proof}
Consider the regular map
\begin{align*}
 \psi : \left(\Pj(\R^{n_1}) \times \Pj(\R^{n_2}) \times \Pj(\R^{n_3}))^{\times r-1} \times \Pj(\R^{n_1}\right) \times \Pj(\R^{n_2}) &\to (\Pj\Var{S})^{\times r} \\
 \bigl( ([\sten{a}{i}{}], [\sten{b}{i}{}], [\sten{c}{i}{}])_{i=1}^{r-1}, ([\sten{a}{r}{}], [\sten{b}{r}{}]) \bigr) &\mapsto \bigl( ([\sten{a}{i}{} \otimes \sten{b}{i}{} \otimes \sten{c}{i}{}])_{i=1}^{r-1}, [\sten{a}{r}{} \otimes \sten{b}{r}{} \otimes \sten{c}{1}{}] \bigr).
\end{align*}
The image of $\psi$, write $\Var{V} = \operatorname{Im}(\psi)$, is a projective variety by \cite[Theorem 3.13]{Harris1992}. Because the projective Segre map from \cref{eqn_def_sigma} is a bijection, the fiber of $\psi$ at any point in $\Var{V}$ consists of precisely one point. As a result, by \cite[Theorem 11.12]{Harris1992}, $\dim \Var{V}$ equals the dimension of the source, which is seen to be $r(\dim \Pj\Var{S}) - n_3+1$, i.e., $\operatorname{codim}(\Var{V})=n_3-1$.

Next, we show that $\Var{V} \subset \Sigma_\Pj$, which then concludes the proof. Let $[\tensor{A}_i] = [\sten{a}{i}{}\otimes\sten{b}{i}{}\otimes\sten{c}{i}{}]$ be such that $([\tensor{A}_1], \ldots, [\tensor{A}_r]) \in \Var{V}$. Thus, $[\sten{c}{r}{}] = [\sten{c}{1}{}]$.
Consider the (affine) tangent spaces
\begin{align*}
\Tang{\tensor{A}_1}{\Var{S}}
= \Tang{\sten{a}{1}{}\otimes\sten{b}{1}{}\otimes\sten{c}{1}{}}{\Var{S}}
&= \R^{n_1} \otimes \sten{b}{1}{} \otimes \sten{c}{1}{} + \sten{a}{1}{}\otimes (\sten{b}{1}{})^\perp \otimes \sten{c}{1}{} + \sten{a}{1}{} \otimes \sten{b}{1}{} \otimes (\sten{c}{1}{})^\perp, \text{ and } \\
\Tang{\tensor{A}_r}{\Var{S}}
= \Tang{\sten{a}{r}{}\otimes\sten{b}{r}{}\otimes\sten{c}{1}{}}{\Var{S}}
&= (\sten{a}{r}{})^\perp \otimes \sten{b}{r}{} \otimes \sten{c}{1}{} + \sten{a}{r}{} \otimes \R^{n_2} \otimes \sten{c}{1}{} + \sten{a}{r}{} \otimes \sten{b}{r}{} \otimes (\sten{c}{1}{})^\perp.
\end{align*}
They intersect at least in the $1$-dimensional subspace
\(
 \{\alpha \,\sten{a}{r}{} \otimes \sten{b}{1}{} \otimes \sten{c}{1}{}\mid \alpha \in \R\}
\). This means that
\[
\dist_{\mathrm{P}}((\Tang{\tensor{A}_1}{\Var{S}},\ldots, \Tang{\tensor{A}_r}{\Var{S}}) ,\Sigma_\Gr) = 0;
\]
hence, by \cref{characterization123}, $\kappa(\tensor{A}_1,\ldots,\tensor{A}_r) = \infty$ and so $([\tensor{A}_1],\ldots,[\tensor{A}_r]) \in \Sigma_\Pj$.
\end{proof}

We can now wrap up the proof of \cref{cor_expected_value}.

\begin{proof}[Proof of \cref{cor_expected_value}]
\Cref{lem_codim_one} shows there is a subvariety $\Var{V} \subset \Sigma_\Pj$ with codimension equal to $n_3-1$. Let $p$ be any smooth point in this subvariety, and consider a neighborhood~$U$ of $p$ in $(\Pj\Var{S})^{\times r}$ such that all points in $U$ are smooth points of $\Var{V}$. Then, $U$ is a submanifold of~$\Sigma_\Pj$ that has codimension $n_3-1$ in $\Var S^{\times r}$. Hence, \cref{thm_expected_value} applies and \cref{cor_expected_value} is proven.
\end{proof}


\Cref{lem_codim_one} still leaves some doubt over the precise codimension of $\Sigma_\Pj$ in other tensor formats than $n_1\times n_2\times 2$. It might be possible to sharpen \cref{cor_expected_value}. Namely, if there exists a submanifold $\Var{M}$ of codimension $k < n_3-1$ in $(\Pj\Var{S})^{\times r}$ with $\Var{M} \subset \Sigma_\Pj$, then we also have $\mathbb{E}[ \kappa(\tensor{A}_1,\ldots,\tensor{A}_r)^k ] = \infty$. For small tensors, we can compute the codimension of the ill-posed locus using computer algebra software. Employing Macaulay2 \cite{M2}, we were able to show that \cref{lem_codim_one} cannot be improved for small tensors with rank $r=2$.

\begin{prop} \label{prop_impossible}
Let $\Var{S}$ be the Segre manifold in $\R^{n_1 \times n_2 \times n_3}$, $10 \ge n_1 \ge n_2 \ge n_3 \ge 2$, and let $\Sigma_\Pj \subset (\Pj\Var{S})^{\times 2}$ be the ill-posed locus. There is no subvariety $\Var V\subset \Sigma_\Pj$ of codimension $k < n_3-1$.
\end{prop}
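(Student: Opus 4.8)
The plan is to reduce the claim to a finite list of codimension computations over $\C$ that can be handled by Macaulay2. The assertion is equivalent to $\operatorname{codim}_{(\Pj\Var{S})^{\times 2}}\Sigma_\Pj\ge n_3-1$, which together with \cref{lem_codim_one} says that the ill-posed locus has codimension \emph{exactly} $n_3-1$ in each of the listed formats. First I would pass to $\C$: write $\Var{S}_\C$ for the complex Segre variety in $\C^{n_1\times n_2\times n_3}$; via \cref{characterization123}, \cref{GrSigma}, and the projective Segre isomorphism of \cref{eqn_def_sigma}, the set $\Sigma_\Pj$ is the zero locus of real polynomials (the maximal minors of a matrix of generators of $\Tang{\tensor{A}_1}{\Var{S}}+\Tang{\tensor{A}_2}{\Var{S}}$), and I denote by $\Sigma_\Pj^\C\subseteq(\Pj\Var{S}_\C)^{\times 2}$ the complex variety they cut out. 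For any real subvariety $\Var{V}\subseteq\Sigma_\Pj$ one then has $\operatorname{codim}_\R\Var{V}\ge\operatorname{codim}_\C\overline{\Var{V}}^\C\ge\operatorname{codim}_\C\Sigma_\Pj^\C$: the first inequality because $\dim_\R(\Pj\Var{S})^{\times 2}=\dim_\C(\Pj\Var{S}_\C)^{\times 2}$ while the real dimension of a real algebraic set is at most the complex dimension of its Zariski closure in the complexification, and the second because $\overline{\Var{V}}^\C\subseteq\Sigma_\Pj^\C$. Hence it suffices to prove $\operatorname{codim}_\C\Sigma_\Pj^\C\ge n_3-1$ for every format with $10\ge n_1\ge n_2\ge n_3\ge 2$, and from here on I work over $\C$.

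To make this effective, recall from \cref{characterization123}--\cref{GrSigma} with $r=2$ that $\Sigma_\Pj^\C=\{([\tensor{A}_1],[\tensor{A}_2])\in(\Pj\Var{S}_\C)^{\times 2}:\Tang{\tensor{A}_1}{\Var{S}}\cap\Tang{\tensor{A}_2}{\Var{S}}\ne\{0\}\}$, with $\tensor{A}_i=\sten{a}{i}{}\otimes\sten{b}{i}{}\otimes\sten{c}{i}{}$. Using the spanning vectors of the Segre tangent space recorded in the proof of \cref{lem_codim_one}, one assembles a matrix $M=M(\sten{a}{1}{},\sten{b}{1}{},\sten{c}{1}{},\sten{a}{2}{},\sten{b}{2}{},\sten{c}{2}{})$, polynomial in the six parameter vectors, whose rows span $\Tang{\tensor{A}_1}{\Var{S}}+\Tang{\tensor{A}_2}{\Var{S}}$; since well-posed pairs exist (equivalently $\Sigma_\Pj^\C\ne(\Pj\Var{S}_\C)^{\times 2}$, which holds as soon as $2n\le n_1n_2n_3$), the generic rank of $M$ is $2n$. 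The map $\{\text{all six parameter vectors nonzero}\}\to(\Pj\Var{S}_\C)^{\times 2}$, $(\ldots)\mapsto([\tensor{A}_1],[\tensor{A}_2])$, is a locally trivial bundle with fibre $(\C^\ast)^6$, hence codimension-preserving, so $\operatorname{codim}_\C\Sigma_\Pj^\C$ equals the codimension in $\C^{2(n_1+n_2+n_3)}$ of the saturation of the maximal-minor ideal $I_{2n}(M)$ by the product of the coordinates of the six vectors. One feeds this determinantal ideal to Macaulay2 \cite{M2} for each of the finitely many formats; the computed codimension equals $n_3-1$ in every case, which together with the bound $\operatorname{codim}_\C\Sigma_\Pj^\C\le n_3-1$ already supplied by \cref{lem_codim_one} proves the proposition (and recovers that bound computationally).

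The step I expect to be the main obstacle is making these computations feasible for the larger formats: for $n_1=n_2=n_3=10$ the matrix $M$ is $60\times 1000$ and the ideal of its $56\times 56$ minors is far too large even to form, so already $\R^{3\times 3\times 3}$ must be treated by a workaround. I would proceed as follows. Work in an affine chart of $(\Pj\Var{S})^{\times 2}$ --- this removes the saturation and shrinks $M$ to size $2n\times n_1n_2n_3$ --- and replace $\Sigma_\Pj^\C$ by the incidence variety $\widetilde{\Var{I}}\subseteq\C^{4(n_1+n_2+n_3)}$ in coordinates $(\sten{a}{i}{},\sten{b}{i}{},\sten{c}{i}{},\vect{x}_i,\vect{y}_i,\vect{z}_i)_{i=1,2}$ defined by equating the two expressions $\vect{x}_i\otimes\sten{b}{i}{}\otimes\sten{c}{i}{}+\sten{a}{i}{}\otimes\vect{y}_i\otimes\sten{c}{i}{}+\sten{a}{i}{}\otimes\sten{b}{i}{}\otimes\vect{z}_i$ for $i=1,2$, i.e.\ by $n_1n_2n_3$ cubics in $4(n_1+n_2+n_3)$ variables. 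Since each of these expressions ranges exactly over $\Tang{\tensor{A}_i}{\Var{S}}$, the open subset of $\widetilde{\Var{I}}$ on which all six parameter vectors and the common value are nonzero surjects onto $\Sigma_\Pj^\C$, and a routine fibre-dimension count shows each fibre of this surjection has dimension $\delta+10\ge 11$, where $\delta=\dim\bigl(\Tang{\tensor{A}_1}{\Var{S}}\cap\Tang{\tensor{A}_2}{\Var{S}}\bigr)$ at the image point; hence $\dim\Sigma_\Pj^\C\le\dim\widetilde{\Var{I}}-11$. It therefore suffices to bound $\dim\widetilde{\Var{I}}$ from above by the value forced by \cref{lem_codim_one}, and this small, low-degree presentation is within reach of Macaulay2; if a direct dimension computation is still too costly, the bound is certified by homogenizing and adjoining sufficiently many linear forms with random rational coefficients and checking that the resulting projective ideal has empty zero set --- which proves the bound, since a variety of larger dimension meets every generic linear space of that codimension. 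All of this runs over $\Q$, so no good-reduction issues arise.
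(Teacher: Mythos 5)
Your real-to-complex reduction and your first, determinantal formulation (maximal minors of the tangent-span matrix, saturated by the coordinates) are sound and match the paper's set-up in spirit; as you concede, though, that route is computationally out of reach beyond the smallest formats, so the entire burden falls on your incidence-variety workaround, and that step fails as stated. The variety $\widetilde{\Var{I}}$ cut out by equating the two tangent-vector expressions contains a large spurious locus: for \emph{arbitrary} nonzero $(\sten{a}{i}{},\sten{b}{i}{},\sten{c}{i}{})$, the linear map $(\vect{x}_i,\vect{y}_i,\vect{z}_i)\mapsto \vect{x}_i\otimes\sten{b}{i}{}\otimes\sten{c}{i}{}+\sten{a}{i}{}\otimes\vect{y}_i\otimes\sten{c}{i}{}+\sten{a}{i}{}\otimes\sten{b}{i}{}\otimes\vect{z}_i$ has a $2$-dimensional kernel, so the locus of $\widetilde{\Var{I}}$ where both expressions vanish identically (common value $=0$) has dimension $2(n_1+n_2+n_3)+4$, irrespective of any ill-posedness. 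Your plan requires certifying $\dim\widetilde{\Var{I}}\le \dim\Sigma_\Pj^\C+11 \le \bigl(2(n_1+n_2+n_3)-6-(n_3-1)\bigr)+11 = 2(n_1+n_2+n_3)+6-n_3$, and this is strictly smaller than $2(n_1+n_2+n_3)+4$ as soon as $n_3\ge 3$ --- exactly the cases of interest. So the Macaulay2 dimension check (or the random-linear-slice certificate) would necessarily report a dimension exceeding your target and prove nothing; your inequality $\dim\Sigma_\Pj^\C\le\dim\widetilde{\Var{I}}-11$ only uses the open part with nonzero common value, so what you would actually have to compute is the dimension of the closure of that open part, which forces you to saturate by the coordinates of the common value (and of the factor vectors) --- precisely the saturation you set out to avoid.

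The paper sidesteps this in two ways. First, it normalizes the auxiliary kernel vector instead of allowing it to degenerate: it introduces free variables $\vect{k}$, imposes $J\vect{k}=0$ on charts covering $\mathbb{Q}^{s}\setminus\{0\}$ (i.e., with one coordinate of $\vect{k}$ normalized), and then \emph{eliminates} $\vect{k}$, so the codimension computed is that of the degeneracy locus itself, with no spurious zero-kernel components. Second, it uses homogeneity of the ill-posed locus to fix the first rank-$1$ tensor to $\sten{e}{1}{1}\otimes\sten{e}{1}{2}\otimes\sten{e}{1}{3}$ and works chart-by-chart in the second factor, which roughly halves the number of unknowns; without this your system has $4(n_1+n_2+n_3)$ unknowns and $n_1n_2n_3$ cubics ($120$ unknowns and $1000$ equations at format $10\times10\times10$), which is unlikely to be tractable even after the saturation/normalization fix. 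Incorporating both ingredients --- normalize (or eliminate) the common tangent vector and quotient out the symmetry on the first point --- essentially turns your proposal into the paper's proof.
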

\begin{proof}
It is an exercise to verify that the Segre manifold $\Var{S}$ is covered by the charts $(U_{i,j}, \phi_{i,j})$, defined uniquely as follows: $U_{i,j} := \operatorname{Im}( \phi_{i,j}^{-1} )$ and
\begin{align*}
 \phi_{i,j} : \,&\R^{n_1-1} \times \R^{n_2-1} \times \R^{n_3} \to \R^{n_1 \times n_2 \times n_3}, \\
 &(\vect{x}, \vect{y}, \vect{z}) \mapsto
 (x_1, \ldots, x_{i-1}, 1, x_{i+1}, \ldots, x_{n_1-1}) \otimes (y_1, \ldots, y_{j-1}, 1, y_{j+1}, \ldots, y_{n_2-1}) \otimes \vect{z}.
\end{align*}
Let $p_1 \in U_{i_1,j_1}$ and $p_2 \in U_{i_2,j_2}$ and $\tensor{A}_1 = \phi_{i_1,j_1}(p_1)$, $\tensor{A}_2 = \phi_{i_2,j_2}(p_2)$. The corresponding rank-2 tensor is $\Phi(\tensor{A}_1, \tensor{A}_2) = \tensor{A}_1+ \tensor{A}_2$. By definition of the derivative of the addition map $\Phi$, its matrix with respect to an orthonormal basis for $\phi_{i_1,j_1}( U_{i_1,j_1} ) \times \phi_{i_2,j_2}( U_{i_2,j_2} )$ and the standard basis on $\R^{n_1 \times n_2 \times n_3} \simeq \R^{n_1 n_2 n_3}$ is the Jacobian of the transformation $\Phi \circ (\phi_{i_1,j_1} \times \phi_{i_2,j_2})$; see \cite[pages 55--65]{Lee2013}. For example, if $i_1=j_1=i_2=j_2$ and  $n_1=n_2=n_3=2$, then the derivative $\deriv{\Phi}{(\tensor{A}_1,\tensor{A}_2)}$ is represented in bases as the $8 \times 8$ Jacobian matrix of the map from
\(
 (\R^1 \times \R^1 \times \R^2) \times (\R^1 \times \R^1 \times \R^2) \to \R^8
\) taking
\[
 ( a_2, b_2, c_1, c_2) \times (x_2, y_2, z_1, z_2 ) \mapsto \begin{bmatrix} 1 \\ a_2 \end{bmatrix} \otimes \begin{bmatrix} 1 \\ b_2 \end{bmatrix} \otimes \begin{bmatrix} c_1 \\ c_2 \end{bmatrix} + \begin{bmatrix} 1 \\ x_2 \end{bmatrix} \otimes \begin{bmatrix} 1 \\ y_2 \end{bmatrix} \otimes \begin{bmatrix} z_1 \\ z_2 \end{bmatrix}.
\]
The ill-posed locus is then the projectivization of the locus where these Jacobian matrices have linearly dependent columns. Note that the codimension of $\Sigma_\Pj \in (\Pj\Var{S})^{\times 2}$ is the same as the codimension in $\Var{S}^{\times 2}$ of the affine cone over $\Sigma_\Pj$. The codimension of the variety where these Jacobian matrices are not injective is the number we need to compute. This variety is given by the vanishing of all maximal minors.

Let $s = n_1+n_2+n_3-2 = \dim \Var{S}$. Computing all $\binom{n_1 n_2 n_3}{2s}$ maximal minors of a Jacobian matrix~$J$ is too expensive. Instead we proceed as follows.
Note that we can perform all computations over $\mathbb{Q}$, because the Jacobian matrix is given by polynomials with integer coefficients. By homogeneity, we can always assume that the first rank-$1$ tensor is $p_1 = \sten{e}{1}{1} \otimes \sten{e}{1}{2} \otimes \sten{e}{1}{3} \in \Var{S}$, where $\sten{e}{1}{j} \in \mathbb{Q}^{n_j}$ is the first standard basis vector. For each chart on the second copy of $\Var{S}$, we then take $p_2 \in U_{i_2,j_2}$ and construct the Jacobian matrix~$J$. We then multiply it with the column vector $\vect{k} = (k_1, k_2, \ldots, k_{s}) \in \mathbb{Q}^{s}\setminus\{0\}$ consisting of free variables; note that $\mathbb{Q}^{s}\setminus\{0\}$ should be covered by charts $V_i$ for this. Now, the condition number $\kappa(p_1,p_2) = \infty$ if $\vect{v} := J \vect{k}$ is zero, as then there would be a nontrivial kernel. It follows that the ideal generated by the maximal minors of $J$ is then equal to the elimination ideal obtained by eliminating the $k_i$'s from the ideal generated by the $n_1 n_2 n_3$ components of $\vect{v}$. This can be computed more efficiently in Macaulay2 than generating all maximal minors. The ideal thusly obtained is the same ideal as the one that would have been begotten by performing all computations over $\R$, by the elementary properties of computing Gr\"obner bases \cite[Chapters 2--3]{CLO2015}. Performing this computation in all charts and taking the minimum of the computed codimensions, we found in all cases the value $n_3-1$.
\end{proof}

\section{The condition number and distance to  ill-posedness}\label{sec:proof}

In the course of establishing that the expected value of powers of the condition number can be infinite, that is \cref{thm_expected_value}, we relied on the unproved \cref{cnt}.
The overall goal of this section is to prove \refthm{cnt}. We start with a short detour and recall some results from Riemannian geometry.

\subsection{Isometric immersions}\label{sec:isometric}
 Recall that a smooth map $f : M \to N$ between manifolds $M, N$ is called a \textit{smooth immersion} if the derivative $\deriv{f}{p}$ is injective for all $p \in M$; see \cite[Chapter 4]{Lee2013}. Hence, $\dim M \leq \dim N$.

 \begin{dfn}\label{appendix:isometry_dfn}
 A differentiable map $f: {M}\to {N}$ between Riemannian manifolds $(M,g)$, $(N,h)$ is called an \emph{isometric immersion} if $f$ is a smooth immersion and, furthermore, for all $p\in M$ and $u,v\in\Tang{p}{M}$ it holds that $g_p(u,v) = h_{f(p)}(\deriv{f}{p} (u), \deriv{f}{p} (v))$. If in addition $f$ is a diffeomorphism then it is called an \emph{isometry}.
 \end{dfn}

 We will need the following lemma.

 \begin{lemma}\label{appendix:isometry_importan_lemma}
 Let $M,N, P$ be Riemannian manifolds and $f:M\to N$ and $g:N\to P$ be differentiable maps.
 \begin{enumerate}
   \item Assume $f$ is an isometry. Then, $g\circ f$ is an isometric immersion if and only if $g$ is an isometric immersion.
   \item Assume $g$ is an isometry. Then, $g\circ f$ is an isometric immersion if and only if $f$ is an isometric immersion.
   \item If $f$ is an isometric immersion, then for all $p,q\in M$: $\dist_M(p,q)\geq \dist_N(f(p),f(q)).$
   \end{enumerate}
 \end{lemma}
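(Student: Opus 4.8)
The plan is to reduce all three parts to the chain rule $\deriv{(g\circ f)}{p} = \deriv{g}{f(p)}\circ \deriv{f}{p}$, together with the elementary observation that the differential of an isometry is, at each point, a linear isometry of tangent spaces, hence in particular a linear bijection. To avoid a notational clash with the map $g\colon N\to P$, I will write $g_M,g_N,g_P$ for the Riemannian metrics on $M,N,P$.

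For part (1), suppose first that $g$ is an isometric immersion. Then for $u,v\in\Tang{p}{M}$,
$$
g_P\bigl(\deriv{(g\circ f)}{p}(u), \deriv{(g\circ f)}{p}(v)\bigr) = g_P\bigl(\deriv{g}{f(p)}\deriv{f}{p}(u), \deriv{g}{f(p)}\deriv{f}{p}(v)\bigr) = g_N\bigl(\deriv{f}{p}(u),\deriv{f}{p}(v)\bigr) = g_M(u,v),
$$
using that $g$ is an isometric immersion for the middle equality and that $f$ is an isometry for the last; and $\deriv{(g\circ f)}{p}$ is injective as a composition of the injective map $\deriv{g}{f(p)}$ with the bijective map $\deriv{f}{p}$. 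So $g\circ f$ is an isometric immersion. For the converse, observe that $f^{-1}$ is again an isometry and $g = (g\circ f)\circ f^{-1}$, so the implication just proved---applied with $g\circ f$ in place of $g$ and $f^{-1}$ in place of $f$---shows $g$ is an isometric immersion.

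For part (2), the same chain-rule computation, now using that $g$ is an isometry, gives $g_P(\deriv{(g\circ f)}{p}(u),\deriv{(g\circ f)}{p}(v)) = g_N(\deriv{f}{p}(u),\deriv{f}{p}(v))$, so the metric-preservation condition for $g\circ f$ is literally equivalent to the one for $f$; and since $\deriv{g}{f(p)}$ is a bijection, $\deriv{(g\circ f)}{p}$ is injective if and only if $\deriv{f}{p}$ is. Combining the two equivalences yields part (2). For part (3), let $\gamma\colon[0,1]\to M$ be any piecewise differentiable curve with $\gamma(0)=p$, $\gamma(1)=q$; then $f\circ\gamma$ is a piecewise differentiable curve in $N$ from $f(p)$ to $f(q)$, and applying the isometric-immersion identity to $\gamma'(t)$ for each $t$ gives $l(f\circ\gamma)=l(\gamma)$. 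Hence $\dist_N(f(p),f(q))\le l(f\circ\gamma)=l(\gamma)$, and taking the infimum over all such $\gamma$ gives $\dist_N(f(p),f(q))\le\dist_M(p,q)$.

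There is no substantial obstacle in this lemma: it is bookkeeping with the chain rule and the definitions. The only subtleties worth flagging are to remember that $f^{-1}$ remains an isometry (used for the converse in part (1)), and that in part (3) one gets an inequality rather than an equality precisely because curves of the form $f\circ\gamma$ need not exhaust all curves of $N$ joining $f(p)$ and $f(q)$ when $f$ is merely an immersion.
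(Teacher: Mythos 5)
Your proof is correct and follows essentially the same route as the paper: the chain rule computation for metric preservation, the trick $g=(g\circ f)\circ f^{-1}$ for the converse in part (1), and the curve-length argument for part (3). You are in fact slightly more careful than the paper's own proof, which leaves the injectivity of the differentials and the details of part (3) implicit.
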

 \begin{proof}
 Let $p\in M$. By the chain rule we have $\deriv{(g\circ f)}{p}=\deriv{g}{f(p)}\;\deriv{f}{p}$. Hence, for all $\vect{u}, \vect{v} \in \Tang{p}{M}$  we have
 $\langle \deriv{(g\circ f)}{p}\;\vect{u}, \deriv{(g\circ f)}{p} \;\vect{v} \rangle  =  \langle \deriv{g}{f(p)}\;\deriv{f}{p}\; \vect{u}, \deriv{g}{f(p)}\;\deriv{f}{p}\; \vect{v} \rangle.$
 We prove (1): If $g$ is isometric, the foregoing equation simplifies to $\langle \deriv{(g\circ f)}{p}\;\vect{u}, \deriv{(g\circ f)}{p} \;\vect{v} \rangle =  \langle \deriv{f}{p}\; \vect{u}, \deriv{f}{p}\; \vect{v} \rangle=  \langle  \vect{u}, \vect{v} \rangle$. Hence, $g\circ f$ is isometric. By the same argument, if $g\circ f$ is isometric, $g=g\circ f \circ f^{-1}$ is isometric.
 The second assertion is proved similarly.
 Finally, the last assertion is immediately clear from the definition of Riemannian distance.
 \end{proof}

\subsection{Proof of \refthm{cnt}}
In the introduction we recalled, in \refeqn{characterization123}, that the condition number is equal to the inverse distance of the tuple of tangent spaces to the tuples of linear spaces not in general position. The idea to prove \refthm{cnt} is to make use of  \reflem{appendix:isometry_importan_lemma}~(3) from the previous subsection. This lemma lets us to compare Riemannian distances between two manifolds. However,
the projection distance from \cref{projection_distance} is not given by some Riemannian metric on $\Gr(\Pi,n)$. In fact, up to scaling there is a unique orthogonally invariant metric on $\Gr(\Pi,n)$ when $\Pi~>~4$; see \cite{leichtweiss}. A usual choice of scaling is such that the distance associated to the metric is given~by $d(V,W)=\sqrt{\theta_1^2+\cdots+\theta_n^2}$, where $\theta_1,\ldots,\theta_n$ are the \emph{principal angles} between $V$ and $W$ \cite{principal_angles}. Let us call this choice of metric the \emph{standard metric} on $\Gr(\Pi,n)$. From this we construct the following distance function on $\Gr(\Pi,n)^{\times r}$:
  \begin{equation}\label{dist_R}
  \dist_\mathrm{R}((V_i)_{i=1}^r,(W_i)_1^r):= \sqrt{\sum_{i=1}^r d(V_i,W_i)^2}.
  \end{equation}
We can also express the projection distance in terms of the principal angles between the linear spaces $V$ and~$W$: $\Vert \pi_{V}-\pi_{W}\Vert = \max_{1\leq i\leq n} \vert\sin \theta_i\vert$; see, e.g., \cite[Table 2]{YL2016}. Since, for all $-\frac{\pi}{2}<\theta<\frac{\pi}{2}$ we have $\vert\sin(\theta)\vert\leq\vert\theta\vert$, this shows that
\begin{equation}\label{part1:cnt_distances}
   \dist_\mathrm{P}((V_i)_{i=1}^r,(W_i)_{i=1}^r)\leq \dist_\mathrm{R}((V_i)_{i=1}^r,(W_i)_{i=1}^r)
\end{equation}
This is an important inequality because it allows us to prove \cref{cnt} by replacing $\dist_\mathrm{P}$ by~$\dist_\mathrm{R}$. The second key result for the proof of \cref{cnt} is the following.
\begin{prop} \label{isometry_thn}
We consider to $\mathbb{P}\Var{S}$ to be endowed with the weighted metric from \cref{weighted_distance} and $\Gr(\Pi,n)$ to be endowed with the standard metric. Then,
  $\phi: \mathbb{P}\Var{S} \to \Gr(\Pi,n), [\tensor A] \mapsto \Tang{\tensor A}{\Var{S}}$
is an isometric immersion in the sense of Definition \ref{appendix:isometry_dfn}.
\end{prop}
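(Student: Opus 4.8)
The plan is to exhibit the map $\phi$ as a composition of three maps, two of which are already known (or easily seen) to be isometries, and then invoke Lemma \ref{appendix:isometry_importan_lemma}. Concretely, write
$$
\phi \;=\; \Theta \circ \tau \circ \sigma^{-1},
$$
where $\sigma^{-1} : \mathbb{P}\Var{S} \to \mathbb{P}(\R^{n_1})\times\cdots\times\mathbb{P}(\R^{n_d})$ is the inverse of the projective Segre map (an isometry by construction of the weighted metric $g$ on $\mathbb{P}\Var{S}$ as the pull-back of $\langle\cdot,\cdot\rangle_{\mathrm w}$), where $\tau$ sends $([\sten{a}{}{1}],\ldots,[\sten{a}{}{d}])$ to the same point equipped with the product metric of $d$ \emph{spheres of radii $\sqrt{n-n_1},\ldots,\sqrt{n-n_d}$} — this is an isometry from $(\mathbb{P}(\R^{n_1})\times\cdots\times\mathbb{P}(\R^{n_d}), \langle\cdot,\cdot\rangle_{\mathrm w})$ onto that scaled product of spheres, exactly as in the proof of the weighted-distance lemma — and where $\Theta$ sends a $d$-tuple of (representatives of) points on those scaled spheres to the tangent space $\Tang{\sten{a}{}{1}\otimes\cdots\otimes\sten{a}{}{d}}{\Var{S}} \in \Gr(\Pi,n)$. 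By parts (1) and (2) of Lemma \ref{appendix:isometry_importan_lemma}, it then suffices to show that $\Theta$ is an isometric immersion; equivalently, since $\sigma$ is a diffeomorphism, that $\phi\circ\sigma$ (i.e. $\Theta\circ\tau$) is an isometric immersion.

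Next I would set up coordinates for the derivative. Fix $p = ([\sten{a}{}{1}],\ldots,[\sten{a}{}{d}])$; by orthogonal invariance (the orthogonal group acts transitively on each $\mathbb{P}(\R^{n_j})$ and its induced action on $\R^{n_1\times\cdots\times n_d}$ is orthogonal and carries Segre tangent spaces to Segre tangent spaces) we may assume each $\sten{a}{}{j}$ is a standard basis vector, reducing to exactly the situation of Lemma \ref{sigma_det}. As computed there, an orthonormal basis of $\Tang{p}{(\mathbb{P}(\R^{n_1})\times\cdots\times\mathbb{P}(\R^{n_d}))}$ with respect to the \emph{product Fubini--Study} metric consists of the vectors $\vect{u}_{j,i}$ ($1\le j\le d$, $2\le i\le n_j$) supported in the $j$-th slot, and $\deriv{\sigma}{p}(\vect u_{j,i}) = \sten{e}{1}{1}\otimes\cdots\otimes\sten{e}{i}{j}\otimes\cdots\otimes\sten{e}{1}{d}$, with these images orthonormal in $\R^{n_1\times\cdots\times n_d}$. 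The map $\phi\circ\sigma$ factors through $\sigma$, and $\phi = \text{(assignment of the tangent space)}$; what I must track is how $\deriv{\phi}{[\tensor A]}$ acts on $\deriv{\sigma}{p}(\vect u_{j,i})$, and compute the induced inner products in the standard metric on $\Gr(\Pi,n)$, which is defined via principal angles.

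The crux is the following. A tangent vector to $\Gr(\Pi,n)$ at $W$ is represented by a linear map $\dot W : W \to W^\perp$, and the standard Riemannian inner product of two such is $\langle \dot W_1, \dot W_2\rangle = \operatorname{trace}(\dot W_1^{*}\dot W_2)$ (this is the normalization making the distance $\sqrt{\sum\theta_i^2}$). Moving $[\sten{a}{}{j}]$ in the direction $\sten{e}{i}{j}$ tilts the tangent space $\Tang{\tensor A}{\Var S}$; I need to show that the resulting element of $\Tang{W}{\Gr(\Pi,n)}$ has squared norm exactly $n - n_j$, and that the directions coming from different $(j,i)$ are orthogonal in $\Gr(\Pi,n)$. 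The factor $n-n_j$ is precisely what the scaling $\tau$ by $\sqrt{n-n_j}$ compensates. To see the $n-n_j$: when $\sten{a}{}{j}$ moves infinitesimally, every summand of $\Tang{\tensor A}{\Var S}$ that contains the $j$-th factor $\sten{a}{}{j}$ "as a vector" (rather than as the full $\R^{n_j}$) gets tilted, and a dimension count of those summands — using the explicit description $\Tang{\tensor A}{\Var S} = \sum_{k=1}^d \sten{a}{}{1}\otimes\cdots\otimes\R^{n_k}\otimes\cdots\otimes\sten{a}{}{d}$ and that the overlaps account for the $1-d$ in $n = 1-d+\sum n_i$ — yields exactly $n - n_j$ tilted basis directions, each contributing $1$ to the trace, while the component of the tilt landing in $W^\perp$ is genuinely orthogonal for distinct generators. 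Orthogonality across different $(j,i)$ should follow because the perturbations land in mutually orthogonal rank-one "slabs" of $\R^{n_1\times\cdots\times n_d}$. I expect this trace/dimension bookkeeping — pinning down precisely which basis vectors of $W$ are moved, and verifying their images in $W^\perp$ are orthonormal so the trace is literally the count $n-n_j$ — to be the main obstacle; everything else is a formal application of Lemma \ref{appendix:isometry_importan_lemma} together with Lemma \ref{sigma_det}. (Injectivity of $\deriv{\phi}{[\tensor A]}$, needed for "immersion", is automatic once the inner-product identity holds, since $\langle\cdot,\cdot\rangle_{\mathrm w}$ is positive definite.)
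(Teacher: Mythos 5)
Your proposal is correct in outline but takes a genuinely different route at the crux. Like the paper, you reduce via the isometry $\sigma$ (Lemma \ref{appendix:isometry_importan_lemma}) to showing that $\phi\circ\sigma$ is an isometric immersion; but the paper then composes further with the Pl\"ucker embedding $\iota:\Gr(\Pi,n)\hookrightarrow\Pj(\wedge^n\R^\Pi)$, which is an isometry onto its image, and computes the derivative of $t\mapsto\iota\bigl(\Tang{\gamma_1(t)\otimes\cdots\otimes\gamma_d(t)}{\Var{S}}\bigr)$ as a wedge product of a carefully chosen rotating orthonormal frame, with all norms and cross terms evaluated through Gram determinants (Lemmas \ref{inner_droducts} and \ref{lem_help}, including sign bookkeeping $s_{(i,j)}$ and a zero-row argument killing the terms $\langle\vect{f}_{(i,j)},\vect{f}_{(k,\ell)}\rangle$ for $(i,j)\neq(k,\ell)$). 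You instead stay inside $\Gr(\Pi,n)$ and use the model $\Tang{W}{\Gr(\Pi,n)}\cong\operatorname{Hom}(W,W^\perp)$ with the trace inner product; this is indeed the normalization whose geodesic distance is $\sqrt{\theta_1^2+\cdots+\theta_n^2}$, but you should cite or prove that fact, since it plays exactly the role the Pl\"ucker isometry plays in the paper. Granting it, your bookkeeping does go through and is, if anything, simpler: with the orthonormal basis $\{\tensor{A}\}\cup\{\tensor{A}_{(k,\ell)}\}$ of $W=\Tang{\tensor{A}}{\Var{S}}$, moving slot $j$ in a direction $\vect{x}_j\perp\vect{a}_j$ moves $\tensor{A}$ and the slot-$j$ basis vectors only inside $W$ (in the $\operatorname{Hom}(W,W^\perp)$ model this is discarded automatically, so no clever moving frame is needed), while each $\tensor{A}_{(k,\ell)}$ with $k\neq j$ is sent to the rank-one tensor with $\vect{x}_j$ in slot $j$ and $\vect{u}_\ell^k$ in slot $k$; these images lie in $W^\perp$, are mutually orthogonal slot-by-slot (using $\vect{x}_\lambda\perp\vect{a}_\lambda$ and orthogonality within each slot), number exactly $\sum_{k\neq j}(n_k-1)=n-n_j$, and each has squared norm $\|\vect{x}_j\|^2$; the same slot-wise orthogonality gives the vanishing of cross terms between different slots or different directions in one slot, so the trace norm is $\sum_j(n-n_j)\|\vect{x}_j\|^2=\langle x,x\rangle_{\mathrm{w}}$, precisely the identity \cref{cnt2} of the appendix. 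Two small points of care: the step you defer as the ``main obstacle'' is exactly this verification and must be written out (it is a finite, correct computation), and your map $\tau$ to the product of spheres of radii $\sqrt{n-n_i}$ is only a Riemannian covering (antipodal quotient), not an isometry onto, though this is harmless since only the pointwise metric identity is used; the reduction to standard basis vectors by orthogonal invariance is valid but, as the paper's frame-based computation shows, not needed. What your route buys is the elimination of wedge products, signs, and the Gram-determinant cross-term analysis, at the price of invoking the standard description of the canonical Grassmannian metric.
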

\begin{rem}
In the proposition $\phi$ is \emph{not} the Gauss map
$\Pj \Var{S} \to \Gr(n-1,\Pj\R^\Pi), [\tensor{A}] \mapsto [\Tang{\tensor{A}}{\Var{S}}],$
which maps a tensor to a \emph{projective} subspace of $\Pj\R^\Pi$ of dimension $n-1 = \dim \Pj\Var{S}$.
\end{rem}

\Cref{isometry_thn} lies at the heart of this section, but its proof is quite technical and is therefore delayed until \cref{sec_proof_isometry} below. First, we use it to give a proof of \cref{cnt}.

\begin{proof}[Proof of Theorem \ref{cnt}]
Assume that $\Gr(\Pi,n)^{\times r}$ is endowed with the standard metric on $\Gr(\Pi,n)$.
Since $\phi$ is a isometric immersion, it follows from the definitions of the product metrics on the $r$-fold products of the smooth manifolds $\Pj \Var{S}$ and $\Gr(\Pi, n)$, respectively, that the $r$-fold product
$$
\phi^{\times r}: (\mathbb{P}\Var S)^{\times r} \to \Gr(\Pi,n)^{\times r}, \;([\tensor A_1],\ldots,[\tensor A_r])\mapsto (\Tang{\tensor A_1}{\Var S},\ldots, \Tang{\tensor A_r}{\Var S})
$$
is an isometric immersion. The associated distance on $\Gr(\Pi,n)^{\times r}$ is $\dist_R$ from \cref{dist_R}. By \cref{appendix:isometry_importan_lemma} (3) this implies that
$$
\dist_{\mathrm{w}} \bigl( ([\tensor A_1],\ldots,[\tensor A_r]),\Sigma_\mathbb{P} \bigr) \ge \dist_\mathrm{R} \bigl( (\Tang{\tensor A_1}{\Var S},\ldots, \Tang{\tensor A_r}{\Var S}), \phi^{\times r}(\Sigma_\mathbb{P}) \bigr).
$$
Recall from \cref{GrSigma} the definition of $\Sigma_{\mathrm{Gr}}$ and note that $\phi^{\times r}(\Sigma_\mathbb{P})\subset \Sigma_\Gr$ by construction. Consequently,
$$
\dist_{\mathrm{w}} \bigl( ([\tensor A_1],\ldots,[\tensor A_r]),\Sigma_\mathbb{P} \bigr) \geq \dist_\mathrm{R}\bigl( (\Tang{\tensor A_1}{\Var S},\ldots, \Tang{\tensor A_r}{\Var S}), \Sigma_\Gr \bigr),
$$
so that, by \cref{part1:cnt_distances},
\begin{align*}
\dist_{\mathrm{w}} \bigl( ([\tensor{A}_1],\ldots,[\tensor{A}_r]),\Sigma_{\mathbb{P}} \bigr) &\geq \dist_\mathrm{P}\bigl( (\Tang{\tensor{A}_1}{\Var{S}},\ldots, \Tang{\tensor{A}_r}{\Var{S}}), \Sigma_\Gr \bigr).
\end{align*}
By \cref{characterization123}, the latter equals $\kappa(\tensor{A}_1,\ldots,\tensor{A}_r)^{-1}$, which proves the assertion.
\end{proof}


\section{Numerical experiments} \label{sec:experiments}
In this section, we perform a few numerical experiments in Matlab R2017b \cite{MATLAB2016} for illustrating \cref{cnt,thm_expected_value,cor_expected_value}.

\subsection{Distance to ill-posedness} \label{sec_numex_distance}
To illustrate \cref{cnt}, we performed the following experiment with tensors in $\R^{11}\otimes \R^{10} \otimes \R^5$. Note that the generic rank in that space is $23$. For each $2\leq r\leq 5$ we select an ill-posed tensor decomposition $A:=(\tensor A_1,\ldots, \tensor A_r)\in \Var S^{\times r}$ as explained next.
First, we sample a random rank-1 tuple $(\tensor A_1,\ldots,\tensor A_{r-1})$ in~$\R^{11 \times 10 \times 5}$. Suppose that $\tensor{A}_1 = \sten{a}{1}{1} \otimes \sten{a}{1}{2} \otimes \sten{a}{1}{3}$. Then, we take $\tensor A_r := \sten{a}{1}{1} \otimes \vect{x}_2 \otimes \vect{x}_3$,  where the components of~$\vect{x}_i$ are sampled from $N(0,1)$. Now,
\[
 \tensor{A}_1 + \tensor{A}_r = \sten{a}{i}{1} \otimes ( \sten{a}{i}{2} \otimes \sten{a}{i}{3} + \vect{x}_2 \otimes \vect{x}_3 ),
\]
and since a rank-$2$ matrix decomposition is never unique, it follows that $\tensor{A}_1 + \tensor{A}_r$ has at least a $2$-dimensional family\footnote{The fact that the family is at least two-dimensional follows from the fact that defect of the $2$-secant variety of the Segre embedding of $\R^m \times \R^n$ is exactly 2; see, e.g., \cite[Proposition 5.3.1.4]{Landsberg2012}.} of decompositions, and, hence, so does $\tensor{A}_1 + \cdots + \tensor{A}_r$. Then, it follows from \cite[Corollary 1.2]{BV2017} that $\kappa(A) = \infty$ and hence $A\in \Sigma_\Pj$. Finally, we generate a neighboring tensor decomposition $B:=(\tensor B_1,\ldots, \tensor B_r)\in \Var S^{\times r}$ by perturbing $A$ as follows. Let $\tensor{A}_i = \sten{a}{i}{1} \otimes \sten{a}{i}{2} \otimes \sten{a}{i}{3}$, and then we set $\tensor{B}_i = (\sten{a}{i}{1} + 10^{-2}\cdot\sten{x}{i}{1}) \otimes (\sten{a}{i}{2} + 10^{-2}\cdot\sten{x}{i}{2}) \otimes (\sten{a}{i}{3} + 10^{-2}\cdot\sten{x}{i}{3})$, where the elements of $\sten{x}{i}{k}$ are randomly drawn from $N(0,1)$.

Denote by $(0,1) \to \Var S^{\times r}, t\mapsto B_t$ a curve between $A$ and $B$ whose length is $\dist_\mathrm{w}(A,B)$. Then, for all~$t$, we have
  $\dist_\mathrm{w}(B_t,\Sigma_\Pj)\leq \dist_\mathrm{w}(A,B_t) $
and hence, by \cref{cnt},
  \begin{equation}\label{ineq_proxy}\frac{1}{\kappa(B_t)}\leq \dist_\mathrm{w}(A,B_t).\end{equation}
We expect for small $t$ that $\dist_\mathrm{w}(A,B_t)\approx \dist_\mathrm{w}(A,B_t)$ and so \cref{ineq_proxy} is a good substitute for the true inequality from \cref{cnt}.

The data points in the plots in \cref{figure_experiments} show, for each experiment, $\dist_\mathrm{w}(A,B_t)$ on the $x$-axis and~$\frac{1}{\kappa(B_t)}$ on the $y$-axis. Since all the data points are below the red line, it is clearly visible that \cref{ineq_proxy} holds. Moreover, since the data points (approximately) lie on a line parallel to the red line, the plots suggest, at least in the cases covered by the experiments, that for decompositions $A=(\tensor A_1,\ldots,\tensor A_r)$ close to~$\Sigma_\Pj$ the reverse of \cref{cnt} could hold as well, i.e.,
$
\dist_{\mathrm{w}}(([\tensor A_1],\ldots,[\tensor A_r]), \Sigma_\mathbb{P}) \le c \frac{1}{\kappa(\tensor A_1,\ldots,\tensor A_r)},
$ for some constant $c>0$ that might dependent on~$A$. For completeness, in the experiments shown in \cref{figure_experiments}, such a bound seems to hold for $c = 17$, $25$, $27$, $19$ respectively in the cases $r=2$, $3$, $4$, $5$.

\begin{figure}[tb]\footnotesize
\begin{center}
\includegraphics[width = 0.40\textwidth]{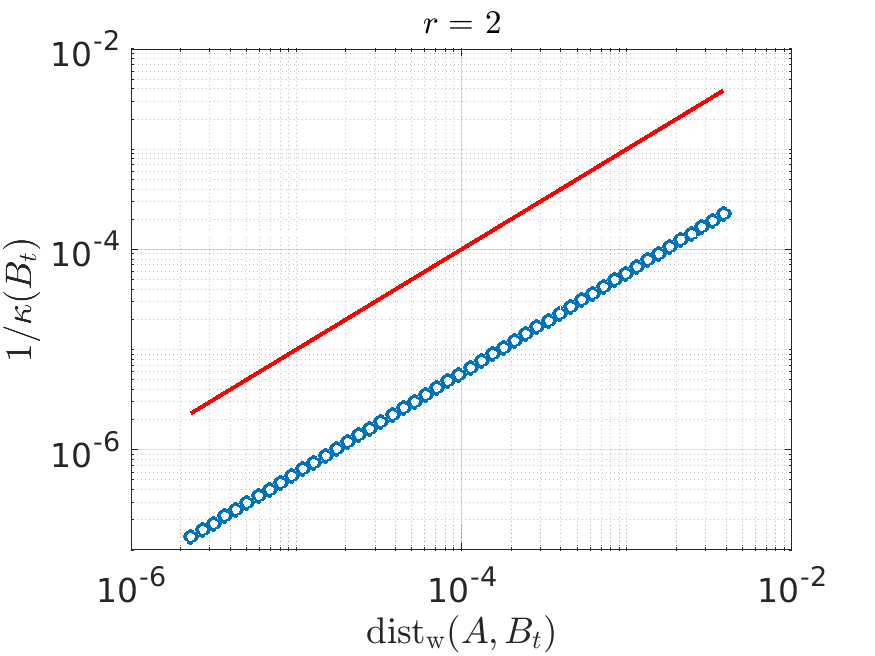}
\includegraphics[width = 0.40\textwidth]{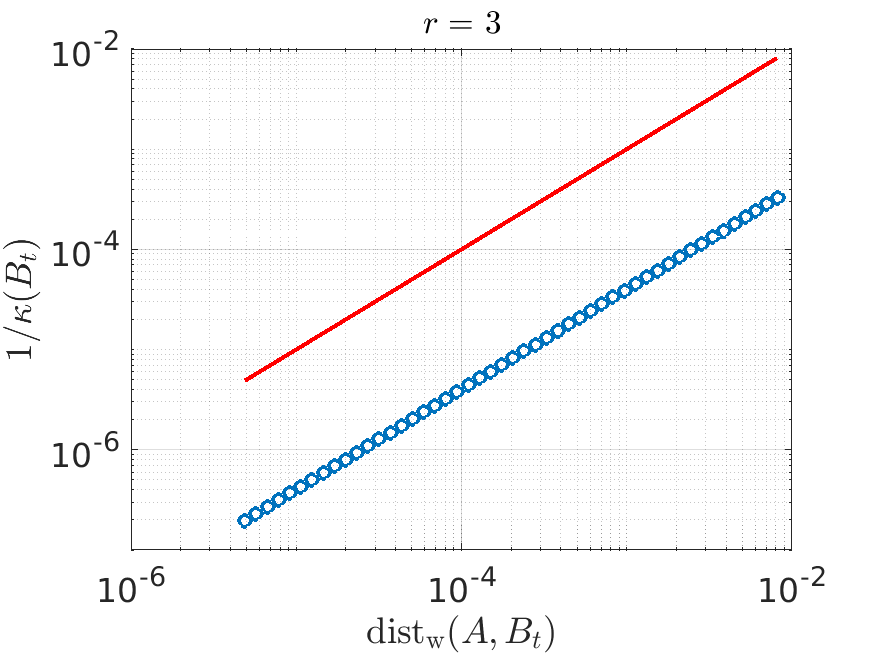}\vspace{0.8em}\\
\includegraphics[width = 0.40\textwidth]{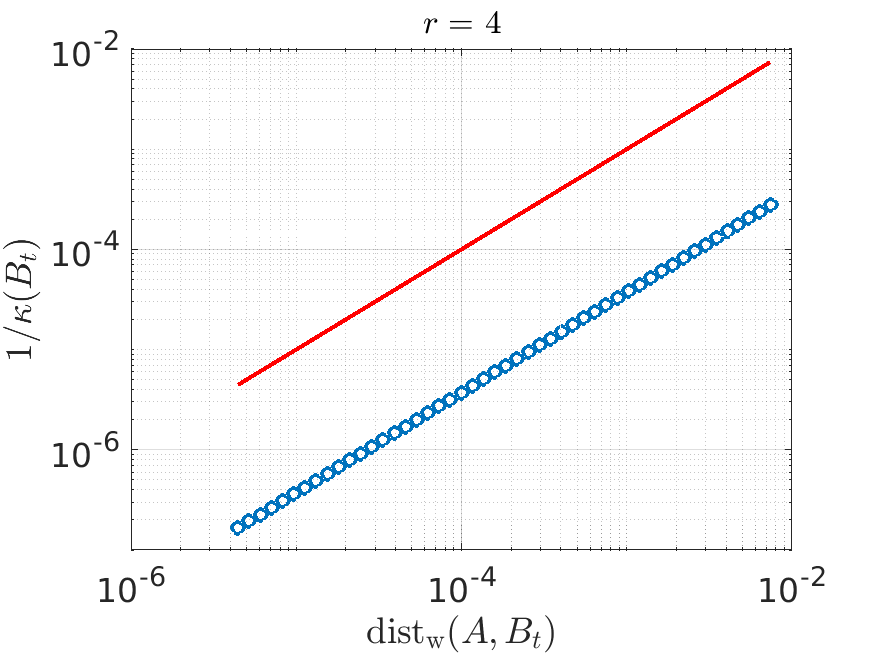}
\includegraphics[width = 0.40\textwidth]{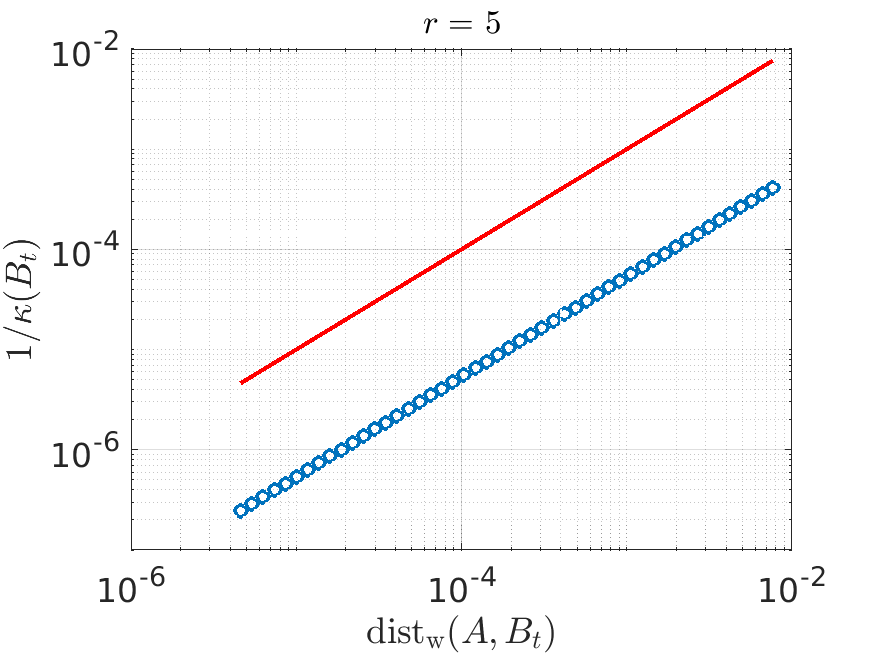}
\end{center}
\caption{\small
The blue data points compare the inverse condition number and the estimate of the weighted distance to the locus of ill-posed CPDs for the tensors described in \cref{sec:experiments}. The red line illustrates where the data points would lie if the inequality in \cref{cnt} were an equality. The gap between the red line and the blue data points is thus a measure for the sharpness of the bound in \cref{cnt}.
} \label{figure_experiments}
\end{figure}

\subsection{Distribution of the condition number} \label{sec_numex_distribution}

\begin{figure}[tb]
\begin{center}
\includegraphics[width = 0.90\textwidth]{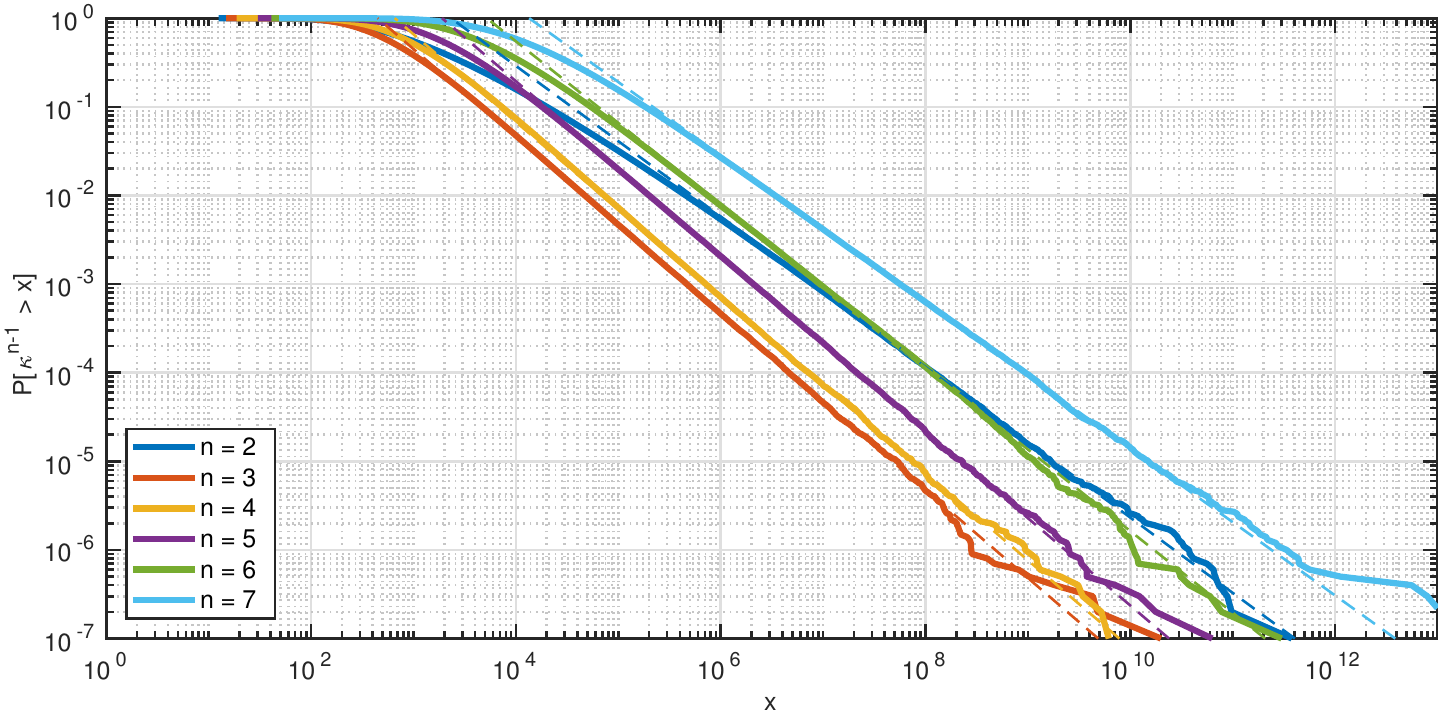}
\end{center}
\caption{\small
A log-log plot of the empirical complementary cumulative distribution function of the $(n-1)$th power of the condition number of random rank-$1$ tuples $(\tensor{A}_1, \ldots, \tensor{A}_7)$ in the space $\R^{7 \times 7 \times n}$ for $n = 2, 3, \ldots, 7$, computed from $10^7$ samples. The dashed lines represent approximations of the form $a_n x^{-b_n}$ of the empirical ccdf for $i=2,3,\ldots,7$; the parameters $(a_n,b_n)$ for each case are given in \cref{tab_parameters}.
}
\label{figure_powerdistribution}
\end{figure}

We perform Monte Carlo experiments for providing additional numerical evidence for \cref{thm_expected_value,cor_expected_value}. To this end, we randomly sampled $10^7$ random rank-$1$ tuples $(\tensor{A}_1,\ldots,\tensor{A}_7)$ in $\R^{7 \times 7 \times n}$, where $n=2,3,\ldots,7$, and computed their condition numbers. We will abbreviate the random variable $\kappa(\tensor{A}_1,\ldots,\tensor{A}_7)$ to $\kappa$ from now onwards.
These condition numbers are computed by constructing the~$49 n \times 7 (12 + n)$ block matrix $T = [U_i]_{i=1}^7$ from \cite[Theorem 1.1]{BV2017}, where the individual blocks $U_i$ are those from \cite[equation (5.1)]{BV2017}, and then computing the inverse of the least (i.e., the $7(12+n)$th) singular value of $T$. The outcome of this experiment is summarized in \cref{figure_powerdistribution}, where we plot the complementary cumulative distribution function (ccdf) of the $(n-1)$th power of the condition number; recall that we know from \cref{cor_expected_value} that $\mathbb{E}[ \kappa^{n-1} ] = \infty$.

It may appear at first glance that $\kappa^{n-1}$ behaves very erratically near the tails of the ccdfs in \cref{figure_powerdistribution}. This phenomenon is entirely due to the sample error. Indeed, as we took $10^7$ samples, this means that in the empirical ccdf, there are $10^k$ data points between $10^{-7} \le \mathrm{P}[ \kappa^{n-1} > x ] \le 10^{-7+k}$. For $k=1$ or $2$, the resulting sample error is visually evident.

It is particularly noteworthy that all of the ccdfs in \cref{figure_powerdistribution} roughly appear to be shifted by a constant; the slope of the curves looks rather similar. In the figure, there are additional dashed lines that appear to capture the asymptotic behavior of the ccdfs of $\kappa^{n-1}$ quite well. These straight lines in the log-log plot correspond to a hypothesized model $a_n x^{-b_n}$ with $a_n, b_n \ge 0$. In \cref{tab_parameters}, we give the (rounded) parameter values for these dashed lines in \cref{figure_powerdistribution}. By taking a log-transformation, fitting the model becomes a linear least squares problem, which was solved exactly. To avoid overfitting, we leave out the~$9.9 \cdot 10^6$ smallest condition numbers, that is, all data above the horizontal line $\mathrm{P}[\kappa^{n-1} > x] = 10^{-2}$, as well as the $100$ largest condition numbers, i.e., the data below the horizontal line $\mathrm{P}[\kappa^{n-1} > x] = 10^{-5}$. The motivation for this is as follows: the right tails of the ccdfs are corrupted by sampling errors, while for the left tails the model is clearly not valid. We are convinced that the hypothesized model is the correct one for very large condition numbers based on \cref{cnt}, which shows that a small distance from the ill-posed locus $\Sigma_\Pj$ the condition number grows at least like one over the distance, and the experiments from \cref{sec_numex_distance}, which show that close to the ill-posed locus the growth of the condition number appears also to be bounded by a constant times the inverse distance to $\Sigma_\Pj$. In other words, close to $\Sigma_\Pj$, the condition number behaves, as determined experimentally, asymptotically as $\kappa(A) = \mathcal{O}\bigl( (\dist_w(A,\Sigma_\Pj))^{-1} \bigr)$.

\begin{table} \small
\[
\begin{array}{lrrrrrr}
\toprule
n & 2 & 3 & 4 & 5 & 6 & 7 \\
\midrule
a_n & 2328.45 & 447.54  & 656.27   & 1902.08 & 5210.73 & 13485.19 \\
b_n & 1.17713 & 1.00514 & 1.01091  & 1.01415 & 1.08573 & 1.20828 \\
\midrule
R^2 & 0.99994 & 0.99987 & 0.99975 & 0.99988 & 0.99940 & 0.99972  \\
\bottomrule
\end{array}
\]
\caption{\small Parameters $(n,a_n,b_n)$ of the model $a_n x^{-b_n}$ fitted to the empirical cumulative distribution function described in \cref{figure_powerdistribution}. The row $R^2$ reports the coefficient of determination of the linear regression model $\log(a_n) - b_n \log( x )$ on the log-transformed empirical data; $R^2 = 1$ means the model perfectly predicts the data.}
\label{tab_parameters}
\end{table}

From the above discussion, we can conclude that for sufficiently large $x$, say $x \ge \kappa_0$, the true cdf of~$\kappa^{n-1}$, i.e.,
\(
F(x) = \mathrm{P}[\kappa^{n-1} \le x] = 1 - \mathrm{P}[ \kappa^{n-1} \ge x]
\)
is very well approximated by
\(
1 - a_n x^{-b_n} = \widetilde{F}(x).
\)
We can now employ the estimated cdfs to estimate the expected value of the $k$th power of the condition number $\kappa$ in the unknown cases $n=3,4,\ldots,7$ and $1 \le k \le n-2$. We are unable to compute these cases analytically because, firstly, we do not know whether the codimension of $\overline{\Sigma_\Pj}$ is one, and, secondly, the techniques in this paper can prove only lower bounds on the condition number.
We compute
\[
 \mathbb{E}[ \kappa^k ]
 = \mathbb{E}[ (\kappa^{n-1})^{\frac{k}{n-1}} ]
 = \int_0^\infty x^{\frac{k}{n-1}} \,\mathrm{d} F(x)
 = C + \int_{\kappa_0}^\infty x^{\frac{k}{n-1}} F'(x) \mathrm{d}x
 \approx C' + \int_{\kappa_0}^\infty x^{\frac{k}{n-1}} \widetilde{F}'(x) \mathrm{d}x,
\]
where in the last step we assume that the error term $E(x) = F'(x) - \widetilde{F}'(x)$ integrated against $x^{\frac{k}{n-1}}$ is at most a constant; this requires that the hypothesized model is asymptotically correct as $x \to \infty$, which seems reasonable based on the above experiments. So it follows that
\[
 \mathbb{E}[ \kappa^k ] \approx C' + \int_{\kappa_0}^\infty  a_n b_n x^{-b_n - 1 + \frac{k}{n-1}}  \mathrm{d}x.
\]
Note that the critical value for obtaining a finite integral is $k < (n-1) b_n$. Incidentally, the integral computed from the hypothesized model is finite for $n=2$, as $1 < 1.17713$, but we attribute this $17\%$ error of $b_n$ to the sample variance, as we have proved in \cref{cor_expected_value} that the true integral is infinity.
For $n \ge 3$, all of the hypothesized integrals with $1 \le k \le n-2$ integrate to constants; the computed values~$b_n$ would have to be off by $27\%$ before the case $n=5$ with $k=3$ integrates to infinity. This provides some indications that the expected value of the condition number $\kappa$ will be \emph{finite} for $n_1 \times n_2 \times n_3$ tensors, provided that all $n_i \ge 3$. It is therefore unlikely that \cref{cor_expected_value} may be improved by the techniques considered in this paper.

\section{Conclusions} \label{sec_conclusions}
We presented a technique for establishing whether the average condition number of CPDs is infinite, namely \cref{thm_expected_value}. This is based on the partial condition number theorem, \cref{cnt}, that bounds the inverse condition number by a distance to the locus of ill-posed CPDs. Using this strategy, we showed that the average of powers of the condition numbers of random rank-$1$ tuples of length $r$ can be infinite in \cref{cor_expected_value}, depending on the codimension of the ill-posed locus. In particular, it was proved that the average condition number for $n_1 \times n_2 \times 2$ tensors is infinite. We are convinced that the inability to reduce the power in \cref{cor_expected_value} to $1$ for $n_1 \times n_2 \times n_3$ tensors with $2 \le n_1, n_2, n_3 \le 10$, as shown in \cref{prop_impossible}, along with the numerical experiments in \cref{sec_numex_distribution}, are a strong indication that the average condition number is finite for tensors for which $n_1, n_2, n_3 \ge 3$.

The large gap in sensitivity between the case of $n_1 \times n_2 \times 2$ tensors and larger tensors has negative implications for the numerical stability of algorithms for computing CPDs based on a generalized eigendecomposition \cite[such as those by][]{LRA1993,Lorber1985,SK1990,SY1980}, as is shown by \cite{BBV2018}.

The strategy presented in this article cannot prove that the average condition number is finite. However, we believe that the main components of our approach can be adapted to prove upper bounds on the average condition number, provided that one can establish a local converse to \cref{cnt}.

\appendix
\section{Proof of \cref{isometry_thn}} \label{sec_proof_isometry}

In this section we prove \cref{isometry_thn} to complete our study. We abbreviate $\mathbb{P}^{m-1}:=\mathbb{P}(\R^m)$ in the following. Consider the following commutative diagram:
$$ 
\begin{tikzcd}
\Pj^{n_1-1} \times \cdots \times \Pj^{n_d-1} \arrow{r}{\sigma} \arrow{d}[left]{\psi:=\iota \circ \phi \circ \sigma} & \arrow{d}{\phi} \mathbb{P}\Var{S} \\
\Pj( \wedge^n \R^\Pi ) & \Gr(\Pi,n) \arrow[hookrightarrow]{l}{\iota}
\end{tikzcd}
$$
Herein, $\sigma$ as defined in \cref{eqn_def_sigma} is an isometry by the definition, $\phi$ is defined as in the statement of the proposition, and $\iota$ is the Pl\"ucker embedding \cite[Chapter~3.1.]{gkz}, which maps into the space of \emph{alternating tensors} $\Pj( \wedge^n \R^\Pi )$. Recall from \cite[Section 2.6]{Landsberg2012} that alternating tensors are linear combinations of alternating rank-1 tensors like
\[ \vect{x}_1\wedge \cdots \wedge \vect{x}_d := \frac{1}{d!} \sum_{\pi \in \mathfrak{S}_d} \operatorname{sgn}(\pi) \vect{x}_{\pi_1} \otimes \vect{x}_{\pi_2} \otimes \cdots \otimes \vect{x}_{\pi_d};
\]
where $\mathfrak{S}_d$ is the permutation group on $\{1, \ldots, d\}$.

The image of the Pl\"ucker embedding $\mathscr{P}:=\iota(\Gr(\Pi,n))\subset \mathbb{P}\left(\wedge^{n}\R^{\Pi}\right)$ is a smooth variety called the \emph{Pl\"ucker variety}.
The Fubini--Study metric on $\mathbb{P}\left(\wedge^{n}\R^\Pi\right)$ makes $\mathscr{P}$ a Riemannian manifold. The Pl\"ucker embedding is an isometry; see, e.g., \cite[Section 2]{Griffiths1974} or \cite[Chapter 3, Section 1.3]{Fuchs2004}.

Since $\sigma$ and $\iota$ are isometries, it follows from \cref{appendix:isometry_importan_lemma} that $\phi$ is an isometric immersion if and only if $\psi := \iota \circ \phi \circ \sigma$ is an isometric immersion. We proceed by proving the latter.
According to \cref{appendix:isometry_dfn}, we have to prove that for all $p\in \mathbb{P}^{n_1-1}\times \cdots \times \mathbb{P}^{n_d-1}$ and for all $x, y\in \Tang{p}{(\mathbb{P}^{n_1-1}\times \cdots \times \mathbb{P}^{n_d-1})}$ we have
\begin{equation*}
\langle x, y\rangle_{\text{w}} = \langle (\deriv{\psi}{p})(x), (\deriv{\psi}{p})(y) \rangle.
\end{equation*}
However, the equality $2\langle x, y \rangle =\langle  x- y, x- y\rangle -  \langle  x, x\rangle - \langle  y, y \rangle$ shows that it suffices to prove
  \begin{equation}\label{cnt2}\forall p\in \mathbb{P}^{n_1-1}\times \cdots \times \mathbb{P}^{n_d-1}: \forall x\in \Tang{p}{(\mathbb{P}^{n_1-1}\times \cdots \times \mathbb{P}^{n_d-1})}:
\langle x,x\rangle_{\text{w}} = \langle (\deriv{\psi}{p})(x), (\deriv{\psi}{p})(x)\rangle.
 \end{equation}
To show this, let $p\in \mathbb{P}^{n_1-1}\times \cdots \times \mathbb{P}^{n_d-1}$ and $x\in \Tang{p}{(\mathbb{P}^{n_1-1}\times \cdots \times \mathbb{P}^{n_d-1})}$ be fixed and consider any smooth curve $\gamma : (-1,1) \to \Pj^{n_1-1} \times \cdots \times \Pj^{n_d-1}$ with $\gamma(0) = p$ and $\gamma'(0) = x$. The action of the differential is computed as follows according to \cite[Corollary 3.25]{Lee2013}:
\[
 (\deriv{\psi}{p})(x) = \deriv{(\psi \circ \gamma)}{0}.
\]
We compute the right-hand side of that equation. However, before taking derivatives, we first compute an expression for $(\psi\circ\gamma)(t)$.

Because $
\Tang{p}{(\Pj^{n_1-1} \times \cdots \times \Pj^{n_d-1})} = \Tang{p_1}{\Pj^{n_1-1}} \times \cdots \times \Tang{p_d}{\Pj^{n_d-1}},
$
we can write $x = (x_1, \ldots, x_d)$ with $x_i \in \Tang{p_i}{\Pj^{n_i-1}}$.
For each $i$, we denote by $\vect{a}_i \in \mathbb{S}(\R^{n_i})$ a unit-norm representative for $p_i$, i.e., $p_i= [\vect{a}_i]$ with~$\| \vect{a}_i \|~=~1$ in the Euclidean norm. Letting $\vect{a}_i^\perp = \cset{\vect{u} \in \R^{n_i}}{\langle \vect{u}, \vect{a}_i \rangle = 0}$
denote the orthogonal complement of $\vect{a}_i$ in $\R^{n_i}$, we can then identify~$\vect{a}_i^\perp=\Tang{p_i}{\Pj^{n_i-1}}$ by \cref{tangent_proj}. Moreover, because $\vect{a}_i$ is of unit norm, the Fubini--Study metric on $\Tang{p_i}{\Pj^{n_i-1}}$ is given by the Euclidean inner product on the linear subspace $\vect{a}_i^\perp$.
Now, let~$\vect{x}_i$ denote the unique vector in~$\vect{a}_i^\perp$ corresponding to~$x_i$. The sphere $\mathbb{S}(\R^{n_i})$ is a smooth manifold, so we find a curve $\gamma_i : (-1,1) \to \mathbb{S}(\R^{n_i})$ with $\gamma_i(0) = \vect{a}_i$ and $\gamma_i'(0) = \vect{x}_i$. Without loss of generality we assume that $\gamma_i$ is the exponential map \cite[Chapter 20]{Lee2013}.
We claim that we can write $\gamma$ as $\gamma(t) = (\pi_1 \circ \gamma_1(t), \ldots, \pi_d \circ \gamma_d(t))$, where $\pi_i : \mathbb{S}(\R^{n_i}) \to \Pj^{n_i-1}$ is the canonical projection. Indeed, we have $\gamma(0) = ([\vect{a}_1], \ldots, [\vect{a}_d]) = p$ and
\begin{align*}
\gamma'(0)
= \bigl( (\pi_1 \circ \gamma_1)'(0), \ldots, (\pi_d \circ \gamma_d)'(0) \bigr)
&= \bigl( \mathrm{P}_{(\vect{a}_1^\perp)} \gamma_1'(0), \ldots, \mathrm{P}_{(\vect{a}_d^\perp)} \gamma_d'(0) \bigr)\\
&= \bigl( \mathrm{P}_{(\vect{a}_1^\perp)} \vect{x}_1, \ldots, \mathrm{P}_{(\vect{a}_d^\perp)} \vect{x}_d \bigr)
= \bigl( \vect{x}_1, \ldots, \vect{x}_d \bigr) = x,
\end{align*}
where $\mathrm{P}_A$ denotes the orthogonal projection onto the linear space $A$, where the second equality is due to \cite[Lemma 14.8]{condition}, and where the last step is due to the identification $\vect{a}_i^\perp \simeq \Tang{p_i}{\Pj^{n_i-1}}$. This shows
$(\psi\circ\gamma)(t) = \psi(\pi_1 \circ \gamma_1(t), \ldots,   \pi_d\circ \gamma_d(t))$.
Recall that $\psi = \iota \circ\phi\circ \sigma$ and that
\[
 (\phi\circ\sigma \circ \gamma)(t) = \Tang{\gamma_1(t) \otimes \cdots \otimes \gamma_d(t)}\,\Var S.
\]
Hence, $(\psi\circ\gamma)(t) = \psi(\Tang{\gamma_1(t) \otimes \cdots \otimes \gamma_d(t)}\,\Var S)$. To compute the latter we must give a basis for the tangent space $\Tang{\gamma_1(t) \otimes \cdots \otimes \gamma_d(t)}\,\Var S$. To do so, let us denote by $\{\vect{u}_{1}^i(t),\vect{u}_{2}^i(t), \ldots, \vect{u}_{n_i-1}^i(t)\}$ an orthonormal basis for the orthogonal complement of $\gamma_i(t)$; such a moving orthonormal basis is called an \emph{orthonormal frame}.
%
%
%
%
Then, by \cite[Section 4.6.2]{Landsberg2012} a basis for~$\Tang{\gamma_1(t) \otimes \cdots \otimes \gamma_d(t)}{\Var{S}}$ is given by
\[
\Var{B}(t) = \set{\tensor{A}(t)} \cup \cset{\tensor{A}_{(i,j)}(t)}{1\leq i\leq d, 1\leq j \leq n_i-1},
\]
where
 \begin{align}\label{f_ij2}
 \tensor{A}(t) :=& \gamma_1(t) \otimes \cdots \otimes \gamma_d(t)
 \;\text{ and }\;\\
 \tensor A_{(i,j)}(t) =&
 \gamma_1(t) \otimes\cdots\otimes \gamma_{i-1}(t) \otimes \vect{u}_j^i(t) \otimes \gamma_{i+1}(t) \otimes\cdots\otimes \gamma_d(t).\nonumber
 \end{align}
 If we let $\pi$ denote the canonical projection $\pi:\wedge^{n} \R^\Pi \to \mathbb{P}\left(\wedge^{n} \R^\Pi \right)$, then we find
\begin{equation}\label{cnt1}
(\psi \circ \gamma)(t)
= \iota(\mathrm{span}\,\mathcal{B}(t))
= \pi \left(\tensor{A}(t) \wedge \Biggl( \bigwedge_{i=1}^d \bigwedge_{j=1}^{n_i-1} \tensor{A}_{(i,j)}(t) \Biggr) \right);
\end{equation}
see \cite[Chapter~3.1.C]{gkz}. Note in particular that the right-hand side of \cref{cnt1} is independent of the specific choice of the orthonormal bases $\mathcal{B}(t)$, because the exterior product of another basis is just a scalar multiple of the basis we chose (below we make a specific choice of $\mathcal{B}(t)$ that simplifies subsequent computations). In the following let
$$\vect{g}(t): = \tensor{A}(t) \wedge \Biggl( \bigwedge_{i=1}^d \bigwedge_{j=1}^{n_i-1} \tensor{A}_{(i,j)}(t) \Biggr).$$
We are now prepared to compute the derivative of $(\psi \circ \gamma)(t) = (\pi \circ \vect{g})(t) = [\vect{g}(t)]$. According to \cite[Lemma 14.8]{condition}, we have
\[
\deriv{(\psi \circ \gamma)}{0} = \mathrm{P}_{(\vect{g}(0))^\perp} \frac{\vect{g}'(0)}{\|\vect{g}(0)\|}.
\]
We will first prove that $\| \vect{g}(t) \| = 1$, which entails that
$\vect{g}(t) \subset \mathbb{S}(\wedge^n \R^\Pi)$ so that
\[
\deriv{(\psi \circ \gamma)}{0} = \mathrm{P}_{(\vect{g}(0))^\perp} \vect{g}'(0) = \vect{g}'(0) = \deriv{\vect{g}}{0},
\]
as $\vect{g}'(t)$ would in this case be contained in the tangent space to the sphere over $\wedge^n \R^\Pi$. We now need the following standard result.

\begin{lemma} \label{inner_droducts}
We have the following:
  \begin{enumerate}\item
For $1\leq k\leq d$, let $\vect{x}_k, \vect{y}_k\in \R^{n_k}$, and let $\langle \cdot, \cdot \rangle$ denote the standard Euclidean inner product. Then, the inner product of rank-1 tensors satisfies $\langle \vect{x}_1 \otimes \cdots \otimes \vect{x}_d, \, \vect{y}_1 \otimes \cdots \otimes \vect{y}_d \rangle = \prod_{j=1}^d \langle \vect{x}_j, \vect{y}_j\rangle$.
\item Let $\vect{x}_1,\ldots,\vect{x}_d,\vect{y}_1,\ldots,\vect{y}_d\in\R^m$. Let $\langle \cdot, \cdot \rangle$ be the standard Euclidean inner product. Then, the inner product of skew-symmetric rank-1 tensors satisfies $\langle \vect{x}_1 \wedge \cdots \wedge \vect{x}_d, \, \vect{y}_1 \wedge \cdots \wedge \vect{y}_d \rangle = \det\bigl( [ \langle \vect{x}_i,\vect{y}_j\rangle ]_{i,j=1}^d \bigr).$
\item Whenever $\{ \vect{x}_1, \ldots, \vect{x}_d \}$ is a linearly dependent set, we have $\vect{x}_1 \wedge \cdots \wedge \vect{x}_d = 0.$
\end{enumerate}
\end{lemma}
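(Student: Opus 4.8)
The plan is to establish the three parts in sequence, feeding (1) into (2) and (2) into (3); the only step that is not pure bookkeeping is a reindexing of a double permutation sum in part (2). For (1) I would argue in coordinates. Writing $(\vect{x}_k)_i$ for the $i$th entry of $\vect{x}_k$, the tensor $\vect{x}_1\otimes\cdots\otimes\vect{x}_d$ has $(i_1,\ldots,i_d)$-entry $\prod_{k}(\vect{x}_k)_{i_k}$, and the standard inner product on $\R^{n_1}\otimes\cdots\otimes\R^{n_d}\cong\R^{n_1\cdots n_d}$ sums the products of matching entries, so
\[
\langle \vect{x}_1\otimes\cdots\otimes\vect{x}_d,\ \vect{y}_1\otimes\cdots\otimes\vect{y}_d\rangle
= \sum_{i_1,\ldots,i_d}\ \prod_{k=1}^d (\vect{x}_k)_{i_k}(\vect{y}_k)_{i_k}
= \prod_{k=1}^d\Bigl(\sum_{i}(\vect{x}_k)_{i}(\vect{y}_k)_{i}\Bigr)
= \prod_{k=1}^d\langle\vect{x}_k,\vect{y}_k\rangle,
\]
the middle equality being the distributive law.

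For (2) I would expand both skew-symmetric tensors via the defining formula $\vect{x}_1\wedge\cdots\wedge\vect{x}_d=\frac{1}{d!}\sum_{\pi\in\mathfrak{S}_d}\operatorname{sgn}(\pi)\,\vect{x}_{\pi_1}\otimes\cdots\otimes\vect{x}_{\pi_d}$, apply bilinearity of the inner product together with part (1), and arrive at
\[
\langle\vect{x}_1\wedge\cdots\wedge\vect{x}_d,\ \vect{y}_1\wedge\cdots\wedge\vect{y}_d\rangle
\ \propto\ \sum_{\pi,\tau\in\mathfrak{S}_d}\operatorname{sgn}(\pi)\operatorname{sgn}(\tau)\prod_{k=1}^d\langle\vect{x}_{\pi_k},\vect{y}_{\tau_k}\rangle,
\]
where the proportionality constant is pinned down once the standard (determinant-normalized) induced inner product on $\wedge^{n}\R^{\Pi}$ is chosen, so that the identity comes out without a stray factor. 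The key step is to reindex the product by $j:=\pi_k$ and put $\sigma:=\tau\circ\pi^{-1}$: then $\prod_k\langle\vect{x}_{\pi_k},\vect{y}_{\tau_k}\rangle=\prod_j\langle\vect{x}_j,\vect{y}_{\sigma(j)}\rangle$, multiplicativity of the sign gives $\operatorname{sgn}(\pi)\operatorname{sgn}(\tau)=\operatorname{sgn}(\sigma)$, and for each fixed $\pi$ the inner sum over $\tau$ ranges over all $\sigma\in\mathfrak{S}_d$. Hence the double sum equals $d!\sum_{\sigma\in\mathfrak{S}_d}\operatorname{sgn}(\sigma)\prod_j\langle\vect{x}_j,\vect{y}_{\sigma(j)}\rangle$, which is a constant multiple of the Leibniz expansion of $\det\bigl([\langle\vect{x}_i,\vect{y}_j\rangle]_{i,j=1}^d\bigr)$; collecting constants yields the claimed equality.

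For (3) I would simply specialize (2) to $\vect{y}_i=\vect{x}_i$: then $\Norm{\vect{x}_1\wedge\cdots\wedge\vect{x}_d}^2=\det\bigl([\langle\vect{x}_i,\vect{x}_j\rangle]_{i,j=1}^d\bigr)$ is the Gram determinant of $\{\vect{x}_1,\ldots,\vect{x}_d\}$, which vanishes exactly when the set is linearly dependent; since a vector of zero norm in an inner product space is $0$, the wedge vanishes. (Alternatively, write one $\vect{x}_i$ as a linear combination of the others, distribute the wedge, and note that each summand repeats a factor, hence is $0$ by antisymmetry over $\R$.) I expect no genuine obstacle here — this is classical multilinear algebra — and the only point deserving care is the normalization of the induced inner product on $\wedge^{n}\R^{\Pi}$ relative to the $\tfrac1{d!}$ in the definition of the wedge, so that (2) holds as a clean identity; since downstream (2) is used only through the Pl\"ucker embedding and the Fubini--Study metric, any fixed positive rescaling would be harmless, but I would state the convention explicitly.
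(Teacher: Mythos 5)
Your proposal is correct, but it takes a different route from the paper: the paper does not prove (1) and (2) at all, it simply cites standard references (Hackbusch for the inner product of rank-one tensors, Greub and Lee for the exterior-power formula) and, like you, obtains (3) as a consequence of (2). Your argument is instead self-contained: the coordinate computation for (1), the Leibniz-style double-permutation expansion with the reindexing $\sigma:=\tau\circ\pi^{-1}$ for (2), and the Gram-determinant (or repeated-factor) argument for (3) are all sound. What your route buys, beyond self-containedness, is that it makes explicit a normalization point the paper glosses over: with the $\tfrac{1}{d!}$-normalized wedge defined in the appendix and the inner product literally inherited from the ambient tensor space, your double sum carries a prefactor $\tfrac{1}{(d!)^2}$ against a value $d!\,\det\bigl([\langle\vect{x}_i,\vect{y}_j\rangle]_{i,j}\bigr)$, so the identity in (2) holds only up to the factor $\tfrac{1}{d!}$; the cited texts avoid this by defining the induced inner product on the exterior power directly by the determinant formula (or by using an unnormalized alternation). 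You flag exactly this and observe that any fixed positive rescaling is immaterial downstream, which is right: the lemma is only used to compute the differential of the map into $\Pj(\wedge^n\R^\Pi)$ with the Fubini--Study metric, and a uniform rescaling of the ambient inner product cancels there (the norm $\Vert\vect{g}(t)\Vert$ is constant either way, and the rescaling drops out of the projectivized derivative computation). So the only thing I would ask you to do is what you yourself propose, namely state the convention for the induced inner product on $\wedge^{n}\R^{\Pi}$ once and for all, so that (2) is an exact identity rather than one up to a constant.
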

\begin{proof}
For the first point see, e.g., \cite[Section 4.5]{Hackbusch2012}. For the second see, e.g., \cite[Section 4.8]{Greub1978} or \cite[Proposition 14.11]{Lee2013}. The third is a consequence of the second point.
\end{proof}

Using the computation rules for inner products from \cref{inner_droducts} we find
\begin{align}
\label{a1} \langle \tensor{A}(t), \tensor{A}(t) \rangle &= \prod_{i=1}^d \langle \gamma_i(t), \gamma_i(t) \rangle =1;\\
\label{a2} \langle \tensor{A}(t), \tensor{A}_{(i,j)}(t) \rangle  &= \langle \gamma_i(t), \vect{u}_j^i(t) \rangle \prod_{k\neq i} \langle \gamma_k(t), \gamma_k(t) \rangle =0;\\
\label{a3} \langle \tensor{A}_{(i,j)}(t), \tensor{A}_{(k,\ell)}(t) \rangle &= \begin{cases} 1,& \text{if } (i,j)=(k,\ell), \\0,&\text{else.}\end{cases}
\end{align}
In other words, $\Var{B}(t)$ is an \emph{orthonormal basis} for $\Tang{\tensor{A}(t)}{\Var S}= \Tang{\gamma_1(t) \otimes \cdots \otimes \gamma_d(t)}{\Var S}$. By \cref{inner_droducts}, we have
$$
\langle \vect{g}(t),\vect{g}(t) \rangle
= \det
\begin{bmatrix}
\langle \tensor{A}(t),\tensor{A}(t) \rangle & \langle \tensor{A}(t), \tensor{A}_{(1,1)}(t) \rangle & \cdots & \langle \tensor{A}(t),\tensor{A}_{(d,n_d)}(t)\rangle \\
\langle \tensor{A}_{(1,1)}(t),\tensor{A}(t)\rangle & \langle \tensor{A}_{(1,1)}(t),\tensor{A}_{(1,1)}(t)\rangle & \cdots & \langle \tensor{A}_{(1,1)}(t),\tensor{A}_{(d,n_d)}(t)\rangle \\
\vdots & \vdots  & \ddots & \vdots \\
\langle \tensor{A}_{(d,n_d)}(t),\tensor{A}(t)\rangle & \langle \tensor{A}_{(d,n_d)}(t),\tensor{A}_{(1,1)}(t)\rangle & \cdots & \langle \tensor{A}_{(d,n_d)}(t),\tensor{A}_{(d,n_d)}(t)\rangle
\end{bmatrix},
$$
which equals $\det I_n = 1$.

It now only remains to compute $\deriv{\vect{g}}{0}$. For this we have the following result.

\begin{lemma}\label{lem_help}
Let $\tensor{A} := \tensor{A}(0)$ and $\tensor{A}_{(i,j)} := \tensor{A}_{(i,j)}(0)$ and write
$$\vect{f}_{(i,j)} := \tensor A\wedge \tensor A_{(1,1)}\wedge \cdots \wedge \tensor A_{(i,j-1)}\wedge \tensor{A}_{(i,j)}'(0) \wedge \tensor{A}_{(i,j+1)} \wedge \cdots\wedge \tensor A_{(p,n_d-1)}. $$
The differential satisfies
$
\deriv{\vect g}{0} = \sum_{i=1}^d\sum_{j=1}^{n_i-1} \vect{f}_{(i,j)},
$
where
\(
\left\langle \vect f_{(i,j)}, \vect f_{(k,\ell)}\right\rangle = \delta_{ik}\delta_{j\ell} \sum_{1 \le \lambda \ne i \le d} \langle \vect x_\lambda, \vect x_\lambda \rangle,
\)
where $\delta_{ij}$ is the Kronecker delta.
\end{lemma}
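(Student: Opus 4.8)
The plan is to differentiate the exterior product $\vect{g}(t)$ factor by factor and then read off the Gram matrix of the resulting summands with the inner-product rules of \cref{inner_droducts}. Since the wedge product is multilinear, the Leibniz rule gives
\[
\deriv{\vect{g}}{0} = \tensor{A}'(0) \wedge \Biggl( \bigwedge_{i=1}^d \bigwedge_{j=1}^{n_i-1} \tensor{A}_{(i,j)} \Biggr) + \sum_{i=1}^d \sum_{j=1}^{n_i-1} \vect{f}_{(i,j)},
\]
so the first task is to show the leading term vanishes. Because each $\gamma_i$ is the exponential map, $\gamma_i'(0) = \vect{x}_i \in \vect{a}_i^\perp = \cspan\{\vect{u}_1^i(0), \ldots, \vect{u}_{n_i-1}^i(0)\}$, hence $\tensor{A}'(0) = \sum_{i} \vect{a}_1 \otimes \cdots \otimes \vect{x}_i \otimes \cdots \otimes \vect{a}_d$ is a linear combination of the $\tensor{A}_{(p,q)}$; the leading wedge thus contains a linearly dependent family and is $0$ by \cref{inner_droducts}(3). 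This yields the asserted expression for $\deriv{\vect{g}}{0}$.

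For the inner products I would first fix the orthonormal frame: take $\{\vect{u}_j^i(t)\}$ to be the parallel transport along the geodesic $\gamma_i$ of any orthonormal basis of $\vect{a}_i^\perp$ (this is the ``specific choice of $\mathcal{B}(t)$'' alluded to above). It keeps $\{\vect{u}_j^i(t)\}$ an orthonormal basis of $\gamma_i(t)^\perp$, and since on the sphere a parallel vector field has derivative pointing along the normal direction, $(\vect{u}_j^i)'(0) = -\langle \vect{u}_j^i(0), \vect{x}_i \rangle \vect{a}_i$. Differentiating $\tensor{A}_{(i,j)}(t)$ with the product rule then gives
\[
\tensor{A}_{(i,j)}'(0) = \sum_{\lambda \ne i} \vect{w}^{(i,j)}_\lambda \;-\; \langle \vect{u}_j^i(0), \vect{x}_i \rangle \tensor{A}, \qquad \vect{w}^{(i,j)}_\lambda := \vect{a}_1 \otimes \cdots \otimes \vect{x}_\lambda \otimes \cdots \otimes \vect{u}_j^i(0) \otimes \cdots \otimes \vect{a}_d,
\]
where $\vect{x}_\lambda$ occupies slot $\lambda$ and $\vect{u}_j^i(0)$ slot $i$. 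The term $-\langle \vect{u}_j^i(0), \vect{x}_i \rangle \tensor{A}$ drops out of $\vect{f}_{(i,j)}$ because $\tensor{A}$ already appears as a wedge factor; so $\vect{f}_{(i,j)}$ is the wedge of the orthonormal basis $\{\tensor{A}\} \cup \{\tensor{A}_{(p,q)}\}$ of $\Tang{\tensor{A}}{\Var{S}}$ with the factor $\tensor{A}_{(i,j)}$ replaced by $\sum_{\lambda \ne i} \vect{w}^{(i,j)}_\lambda$.

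The crux, and the step I expect to require the most care, is the orthogonality relation $\vect{w}^{(i,j)}_\lambda \perp \Tang{\tensor{A}}{\Var{S}}$. This is a direct computation with \cref{inner_droducts}(1): $\vect{w}^{(i,j)}_\lambda$ carries two factors (namely $\vect{x}_\lambda$ in slot $\lambda$ and $\vect{u}_j^i(0)$ in slot $i$) each orthogonal to the corresponding $\vect{a}$, whereas $\tensor{A}$ and every $\tensor{A}_{(p,q)}$ differ from $\vect{a}_1 \otimes \cdots \otimes \vect{a}_d$ in at most one slot, so at least one of the $d$ scalar factors in the product formula vanishes. The same computation gives $\langle \vect{w}^{(i,j)}_\lambda, \vect{w}^{(i,j)}_\mu \rangle = \delta_{\lambda\mu} \langle \vect{x}_\lambda, \vect{x}_\lambda \rangle$.

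It then remains to expand $\langle \vect{f}_{(i,j)}, \vect{f}_{(k,\ell)} \rangle$ bilinearly in the $\vect{w}$'s and to evaluate each term with the Gram-determinant formula \cref{inner_droducts}(2), using \cref{a1,a2,a3} for the orthonormality of $\{\tensor{A}\} \cup \{\tensor{A}_{(p,q)}\}$ together with the orthogonality just established. If $(i,j) \ne (k,\ell)$, the Gram matrix has an identically zero row, namely the one indexed by the factor $\tensor{A}_{(k,\ell)}$, which is still present in $\vect{f}_{(i,j)}$ and is orthogonal both to the $\vect{w}$'s and to the other basis vectors appearing in $\vect{f}_{(k,\ell)}$; hence the determinant is $0$. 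If $(i,j) = (k,\ell)$, the Gram matrix is block diagonal with an $(n-1) \times (n-1)$ identity block and the single remaining entry $\langle \vect{w}^{(i,j)}_\lambda, \vect{w}^{(i,j)}_\mu \rangle$, so summing over $\lambda, \mu \ne i$ produces $\sum_{\lambda \ne i} \langle \vect{x}_\lambda, \vect{x}_\lambda \rangle$. Combining the two cases gives $\langle \vect{f}_{(i,j)}, \vect{f}_{(k,\ell)} \rangle = \delta_{ik}\delta_{j\ell} \sum_{1 \le \lambda \ne i \le d} \langle \vect{x}_\lambda, \vect{x}_\lambda \rangle$, as claimed.
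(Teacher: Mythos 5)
Your proposal is correct and follows essentially the same strategy as the paper: Leibniz rule, vanishing of the $\tensor{A}'(0)$ term by linear dependence, a choice of orthonormal frame along $\gamma_i$ whose derivative at $0$ points along $\vect{a}_i$ (so the slot-$i$ derivative contributes only a multiple of $\tensor{A}$, which is killed by the wedge), orthogonality of the replaced tensors to $\Tang{\tensor{A}}{\Var{S}}$, and evaluation of the inner products via Gram determinants. Two small differences are worth noting. First, you use a general parallel-transported frame with $(\vect{u}_j^i)'(0)=-\langle \vect{u}_j^i(0),\vect{x}_i\rangle\vect{a}_i$, whereas the paper constructs an adapted frame with $\vect{u}_1^i(0)=\Vert\vect{x}_i\Vert^{-1}\vect{x}_i$ rotating along the geodesic (which is in fact a special parallel frame); both are legitimate because, as noted before \cref{cnt1}, the frame choice only changes $\vect{g}(t)$ by a constant sign. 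Second, in the off-diagonal case $(i,j)\ne(k,\ell)$ you take the zero row indexed by the factor $\tensor{A}_{(k,\ell)}$ of $\vect{f}_{(i,j)}$, which needs only the orthonormality of $\mathcal{B}(0)$ and the orthogonality $\vect{w}^{(k,\ell)}_\mu\perp\Tang{\tensor{A}}{\Var{S}}$; this is actually more robust than the paper's route, which invokes the identity $\langle \tensor{A}_{(i,j)}^\lambda,\tensor{A}_{(k,\ell)}^\mu\rangle=\delta_{ik}\delta_{j\ell}\delta_{\lambda\mu}\Vert\vect{x}_\lambda\Vert^2$ — an identity that, as stated, fails for certain cross terms (e.g. $\lambda=k$, $\mu=i$, $j=\ell=1$ gives $\Vert\vect{x}_i\Vert\,\Vert\vect{x}_k\Vert$), although the determinant still vanishes, exactly by the row (or column) argument you give. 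So your write-up reaches the same conclusion while sidestepping that subtlety.
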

We prove this lemma at the end of this section. We can now prove \cref{cnt2}. From \cref{lem_help}, we find
\begin{equation*}
\langle (\deriv{\psi}{p})(x), (\deriv{\psi}{p})(x) \rangle
= \langle \deriv{\vect{g}}{0}, \deriv{\vect{g}}{0} \rangle
= \left\langle \sum_{i=1}^d \sum_{j=1}^{n_i-1} \vect{f}_{(i,j)}, \sum_{k=1}^{d} \sum_{\ell=1}^{n_k-1} \vect{f}_{(k,\ell)} \right\rangle
= \sum_{i=1}^d \sum_{j=1}^{n_i -1} \sum_{1 \le \lambda \ne i \le d} \langle \vect{x}_\lambda, \vect{x}_\lambda \rangle.
\end{equation*}
Reordering the terms, one finds
$$
\langle (\deriv{\psi}{p})(x), (\deriv{\psi}{p})(x) \rangle
= \sum_{i=1}^d \langle \vect{x}_i,\vect{x}_i \rangle \sum_{1\le \lambda \ne i \le d} \sum_{j=1}^{n_\lambda - 1} 1
= \sum_{i=1}^d \langle \vect{x}_i, \vect{x}_i \rangle \cdot (n -n_i)
= \langle \vect x, \vect x \rangle_{\mathrm{w}},
$$
where the penultimate equality follows from the formula $n = 1 + \sum_{i=1}^d (n_i-1)$ in \cref{dim_segre}.
This proves \cref{cnt2} so that $\phi$ is an isometric map.

Finally, \cref{cnt2} also entails that $\phi$ is an immersion. Indeed, for an immersion it is required that $\deriv{\psi}{p}$ is injective. Suppose that this is false, then there is a nonzero $x \in \Tang{p}{(\Pj^{n_1-1} \times \cdots \times \Pj^{n_d-1})}$ with corresponding nonzero $\vect{x}$ such that
\[
0 = \langle 0, 0 \rangle = \langle (\deriv{\psi}{p})(x), (\deriv{\psi}{p})(x) \rangle = \langle \vect{x}, \vect{x} \rangle_{\mathrm{w}} > 0,
\]
which is a contraction. Consequently, $\phi$ is an isometric immersion, concluding the proof.\qed

~

It remains to prove \cref{lem_help}.

\begin{proof}[Proof of \cref{lem_help}]
Recall that we have put $\vect{a}_i:=\gamma_i(0) \in\mathbb{S}(\R^{n_i})$ and $ \vect{x}_i:=\gamma_i'(0)\in \Tang{\vect a_i}{\mathbb{S}(\R^{n_i})}$ for $1\leq i\leq d$. Without restriction we can assume that $\gamma_i$ is contained in the great circle through $\vect a_i$ and~$\vect x_i$. As argued above, we have the freedom of choice of an orthonormal basis of each~$\gamma_i(t)^\perp$. To simplify computations we make the following choice. %

For all $i$, let $\vect{u}_2^i, \ldots, \vect u_{n_i-1}^i$ be an orthonormal basis for $\vect a_i^\perp \cap \,\vect x_i^\perp$ and consider the orthogonal transformation
$
U
$
that rotates $\vect{a}_i$ to~$\Vert\vect x_i\Vert^{-1}\vect{x}_i$, $\vect{x}_i$ to $-\Vert\vect x_i\Vert\vect{a}_i$ and leaves $\set{\vect u_{2}^i,\ldots,\vect u_{n-1}^i}$ fixed.
Then,  we define the following curves (which expect for the first one are all constant).
$$
\vect u_1^i(t) := U\gamma_i(t),\quad \vect u_2^i(t) :=\vect{u}_2^i,\quad\ldots\quad \vect u_{n_i-1}^i(t) :=\vect{u}_{n_i-1}^i.
$$
By construction $\{\vect{u}_1^i(t), \vect{u}_2^i(t), \ldots, \vect u_{n-1}^i(t)\}$ is an orthonormal basis for the orthogonal complement of $\gamma_i(t)$ for all $t$. We have
\begin{equation}\label{type_of_derivatives}
\deriv{\vect{u}_1^i(t)}{0} = U\gamma_i'(0) = -\Vert\vect x_i\Vert\vect a_i,\quad \deriv{\vect{u}_2^i(t)}{0} = \cdots= \deriv{\vect{u}_{n_i-1}^i(t)}{0}  =0.
\end{equation}
\begin{figure}[tb]\small
\begin{center}
  \begin{tikzpicture}[scale=1,thick]
    \draw[] (0,3) node {};

    \draw[black,dashed,color=blue]  (0,2) arc (90:115:2) (0,2) arc (90:65:2);
    \draw[black,dashed,color=blue]  (2,0) arc (0:25:2) (2,0) arc (0:-25:2);
    \draw[black,->] (0,0) --node[left] {$\frac{\vect{x}_i}{\Vert \vect x_i\Vert}$} (0,2);
    \draw[black,->] (0,0) --node[above] {$\vect{a}_i$} (2,0);
    \draw[black,->] (0,0) --node[above left] {$\vect{u}_2^i$} (-1,-1);

    \draw[black,->] (2,0) --node[right] {$\gamma_i'(0)$} (2,.87);
    \draw[black,->] (0,2) --node[above] {$U\gamma_i'(0)$} (-.87,2);

    \draw[] (1.3,2.05) node[color=blue] {$U\gamma_i(t)$};
    \draw[] (1.45,-.7) node[color=blue] {$\gamma_i(t)$};
  \end{tikzpicture}
\caption{A sketch of the orthonormal frame $\{\gamma_i(t), U\gamma_i(t), \vect{u}_2^i(t), \ldots, \vect u_{n_i-1}^i(t)\}$.}
\label{fig1}
\end{center}
\end{figure}

We will use this choice of orthonormal bases for the remainder of the proof.
By the definition of $\vect{g}(t)$ and the product rule of differentiation, the first term of $\deriv{\vect{g}}{0}$ is $\tensor{A}'(0) \wedge \bigwedge_{i=1}^d \bigwedge_{j=1}^{n_i-1} \tensor{A}_{(i,j)}$.
We have
\begin{equation}\label{f_deriv}
\tensor{A}'(0) = \sum_{\lambda=1}^d \vect a_1\otimes \cdots \otimes \vect a_{\lambda-1}\otimes \vect x_\lambda\otimes \vect a_{\lambda+1}\otimes\cdots\otimes \vect a_d = \sum_{\lambda=1}^d \Norm{\vect x_\lambda} \tensor{A}_{(\lambda,1)}.
\end{equation}
Hence, from the multilinearity of the exterior product it follows that the first term of $\deriv{\vect{g}}{0}$ is
\begin{equation*}
\sum_{\lambda=1}^d \Norm{\vect x^\lambda} \left( \tensor A_{(\lambda,1)} \wedge \tensor{A}_{(1,1)} \wedge \cdots \wedge \tensor{A}_{(d,n_d-1)} \right) = \sum_\lambda 0 = 0.
\end{equation*}
This implies that all of the terms of $\deriv{\vect{g}}{0}$ involve $\tensor{A}_{(i,j)}'(0)$ for some $(i,j)$. From~\cref{f_ij2}, we find
$$
\tensor{A}_{(i,j)}'(0)  =  \sum_{\lambda = 1}^d \tensor{A}_{(i,j)}^\lambda,
$$
where, using the shorthand notation $\vect{u}_j^i = \vect{u}_j^i(0)$, we have put
\begin{equation*}
\tensor{A}_{(i,j)}^\lambda :=
\begin{cases}
\vect{a}_1 \otimes \cdots \otimes \vect{a}_{\lambda-1} \otimes \vect{x}_\lambda \otimes \vect{a}_{\lambda+1} \otimes \cdots \otimes \vect{a}_{i-1} \otimes \vect{u}_j^i \otimes \vect{a}_{i+1} \otimes \cdots \otimes \vect{a}_d & \text{if } \lambda \ne i, \\
\vect a^1\otimes\cdots\otimes \vect a^{i-1} \otimes \deriv{\vect{u}_j^i(t)}{0}\otimes \vect a^{i+1}\otimes \cdots  \otimes \vect a^d, & \text{otherwise}.
\end{cases}
\end{equation*}
Recall from \refeqn{type_of_derivatives} that
$\deriv{\vect{u}_1^i(t)}{0}  = -\|\vect{x}_i\| \vect{a}_i$, while for $j>1$ we have $\deriv{\vect{u}_j^i(t)}{0} = 0$. Hence,
\begin{equation*}
\tensor{A}_{(i,j)}^\lambda :=
\begin{cases}
\vect{a}_1 \otimes \cdots \otimes \vect{a}_{\lambda-1} \otimes \vect{x}_\lambda \otimes \vect{a}_{\lambda+1} \otimes \cdots \otimes \vect{a}_{i-1} \otimes \vect{u}_j^i \otimes \vect{a}_{i+1} \otimes \cdots \otimes \vect{a}_d & \text{if } \lambda \ne i, \\
\vect a^1\otimes\cdots\otimes \vect a^{i-1} \otimes (-\|\vect{x}_i\| \vect{a}_i) \otimes \vect a^{i+1}\otimes \cdots  \otimes \vect a^d, &\text{if } (\lambda,j) = (i,1), \\
0 & \text{otherwise}.
\end{cases}
\end{equation*}
Then,
\begin{align}
\label{eqn_fij_def} \vect{f}_{(i,j)}
 &= s_{(i,j)}\, \tensor{A} \wedge \left( \sum_{\lambda=1}^d \tensor{A}_{(i,j)}^\lambda \right) \wedge \bigwedge_{i=1}^d \bigwedge_{1 \le j \ne i < n_i} \tensor{A}_{(i,j)} \\
\nonumber &= s_{(i,j)} \sum_{1 \le \lambda \ne i \le d} \tensor{A} \wedge \tensor{A}_{(i,j)}^\lambda \wedge \bigwedge_{i=1}^d \bigwedge_{1 \le j \ne i < n_i} \tensor{A}_{(i,j)}
 =: s_{(i,j)} \sum_{1 \le \lambda \ne i \le d} \vect{f}_{(i,j)}^\lambda,
\end{align}
where $s_{(i,j)} \in \{ -1,1\}$ is the sign of the permutation for moving $\tensor{A}_{(i,j)}'(0)$ to the second position in the exterior product. We continue by computing for $\lambda\ne i$ and $\mu\ne k$ the value
\begin{align*}
\langle \vect{f}_{(i,j)}^\lambda, \vect{f}_{(k,\ell)}^\mu \rangle = \det \bigl( B_{(i,j),\lambda}^T B_{(k,\ell),\mu} \bigr), \text{ where }\;
B_{(i,j),\lambda} :=
\begin{bmatrix}
  \tensor{A} & \tensor{A}_{(i,j)}^\lambda & [[\tensor{A}_{(i,j)}]_{j\ne i} ]_{i=1}^d
\end{bmatrix};
\end{align*}
herein, the column vectors should be interpreted as vectorized tensors. Recall that $\langle \vect{a}_i, \vect{x}_i \rangle = 0$ and that~$\langle \vect{a}_i, \vect{u}_j^i \rangle = 0$ for all $i,j$. Then, it follows from \cref{inner_droducts} and direct computations that for $\lambda\ne i$ and $\mu\ne k$, we have
\begin{align*}
\langle \tensor{A}, \tensor{A}_{(k,\ell)}^\mu \rangle &= \langle \tensor{A}, \tensor{A}_{(k,\ell)} \rangle = 0,\quad
\langle \tensor{A}_{(i,j)}^\lambda, \tensor{A}_{(k,\ell)}^\mu \rangle = \delta_{ik} \delta_{j\ell} \delta_{\lambda\mu} \Vert \vect x_\lambda \Vert^2, \quad \text{and}\quad
\langle \tensor{A}_{(i,j)}^\lambda, \tensor A_{(k,\ell)} \rangle =0.
\end{align*}
We distinguish between two cases.
If $(i,j)\neq (k,\ell)$, $\lambda\ne i$ and $\mu\ne k$, it follows from the above equations that the row of $( B_{(i,j),\lambda})^{T} B_{(k,\ell),\mu}$ consisting of
\[
 \begin{bmatrix} \langle \tensor{A}_{(i,j)}^\lambda, \tensor{A} \rangle & \langle \tensor{A}_{(i,j)}^\lambda, \tensor{A}_{(k,\ell)}^\mu \rangle & [ [\langle \tensor{A}_{(i,j)}^\lambda, \tensor{A}_{(k,\ell)} \rangle ]_{\ell\ne k} ]_k \end{bmatrix}
\]
is a zero row, which implies that $\left\langle \vect f_{(i,j),\lambda},\vect f_{(k,\ell),\mu}\right\rangle =0$. On the other hand, if $(i,j)=(k,\ell)$, $\lambda\neq i$ and $\mu \ne k$, then it follows from the above equations that $B_{(i,j),\lambda}^T B_{(i,j),\mu}$ is a diagonal matrix, namely
\[
 B_{(i,j),\lambda}^T B_{(i,j),\mu} = \operatorname{diag}( 1, \langle \tensor{A}_{(i,j)}^\lambda, \tensor{A}_{(i,j)}^\mu \rangle, 1, \ldots, 1 ).
\]
Its determinant is then $\langle \tensor{A}_{(i,j)}^\lambda, \tensor{A}_{(i,j)}^\mu \rangle = \delta_{\lambda\mu} \| \vect{x}_\lambda \|^2$. Therefore,
\begin{align} \label{eqn_fijlinner}
 \langle \vect{f}_{(i,j)}^\lambda, \vect{f}_{(k,\ell)}^\mu \rangle = \delta_{ik} \delta_{j\ell} \delta_{\lambda\mu} \| \vect{x}_\lambda \|^2.
\end{align}
Finally, we can compute $\langle \vect{f}_{(i,j)}, \vect{f}_{(k,\ell)} \rangle$. From \cref{eqn_fij_def},
\[
\langle \vect{f}_{(i,j)}, \vect{f}_{(k,\ell)} \rangle
= s_{(i,j)} s_{(k,\ell)} \left\langle \sum_{1 \le \lambda \ne i \le d} \vect{f}_{(i,j)}^\lambda, \sum_{1 \le \mu \ne k \le d} \vect{f}_{(k,\ell)}^\mu \right\rangle
= s_{(i,j)} s_{(k,\ell)} \sum_{1 \le \lambda \ne i \le d} \delta_{ik}\delta_{j\ell} \| \vect{x}_\lambda \|^2,
\]
which is zero unless $(i,j) = (k,\ell)$. For $(i,j) = (k,\ell)$, we find
\[
 \| \vect{f}_{(i,j)} \|^2 = s_{(i,j)}^2 \sum_{1 \le \lambda \ne i \le d} \| \vect{x}_\lambda \|^2 = \sum_{1 \le \lambda \ne i \le d} \| \vect{x}_\lambda \|^2,
\]
proving the result.
\end{proof}
\bibliographystyle{amsplain}
\bibliography{literature}
\end{document}